\documentclass{amsart}

\usepackage[foot]{amsaddr}

\usepackage[margin=3cm,includefoot,footskip=30pt]{geometry}
\usepackage[english]{babel}

\usepackage{amssymb,amsfonts,mathtools,amsthm}
\usepackage{graphicx}
\usepackage{subfig}
\captionsetup[subfigure]{labelfont=rm}

\usepackage{enumitem}
\setenumerate{label={{\rm({\roman*})}},leftmargin=6mm,itemsep=3pt,topsep=3pt}
\setitemize{label={$\vcenter{\hbox{\tiny$\bullet$}}$},leftmargin=6mm,itemsep=3pt,topsep=3pt}

\usepackage{lmodern}
\usepackage[algoruled,linesnumbered,lined]{algorithm2e}
\usepackage{algpseudocode}

\usepackage[outline]{contour}
\contourlength{1.3pt}
\usepackage{tikz}
\usetikzlibrary{calc,arrows}
\tikzstyle{every picture}=[line width=.65pt,minimum size=3pt,every label/.append style={font=\small},label distance=-1pt]
\usepackage{url}

\newcommand{\R}{\mathbb{R}}

\newcommand{\bb}{\mathbb}
\newcommand{\Z}{\bb Z}

\newcommand{\vertex}{\mathrm{vert}}
\newcommand{\aff}{\mathrm{aff}}
\newcommand{\rk}{\mathrm{rk}}
\newcommand{\conv}{\mathrm{conv}}
\newcommand{\cl}{\mathrm{cl}}
\newcommand{\dchcl}{\cl_{\mathrm{dch}}}
\newcommand{\inccl}{\cl_{\mathrm{inc}}}

\newcommand{\polylog}{\mathrm{polylog}}

\newcommand{\firstX}{\mathcal{X}_{\mathrm{full}}}
\newcommand{\secondX}{\mathcal{X}_{\mathrm{inc}}}
\newcommand{\finalX}{\mathcal{X}}

\newcommand{\altlexeq}{\mathop{{\preccurlyeq}\hskip-.6mm\raisebox{.45pt}{$\cdot$}}}
\newcommand{\altlex}{\mathop{{\prec}\hspace{-.6mm}\cdot}}

\newtheorem{thm}{Theorem}
\newtheorem{cor}[thm]{Corollary}
\newtheorem{lem}[thm]{Lemma}
\newtheorem{prop}[thm]{Proposition}
\newtheorem{conj}[thm]{Conjecture}

\theoremstyle{definition}
\newtheorem{defn}[thm]{Definition}
\newtheorem{ex}[thm]{Example}

\makeatletter
\def\paragraph{\medskip\@startsection{paragraph}{4}
  \z@\z@{-\fontdimen2\font}
  {\normalfont\itshape}}
\makeatother

\pagestyle{plain}

\title{Enumeration of $2$-level polytopes}

\author[A.~Bohn]{Adam Bohn\textsuperscript{1}}
\address[1,3,4,5]{Universit\'e libre de Bruxelles, Brussels, Belgium}
\email{adam.bohn@ulb.ac.be}

\author[Y.~Faenza]{Yuri Faenza\textsuperscript{2}}
\address[2]{IEOR Department, Columbia University, New York, USA}
\email{yf2414@columbia.edu}

\author[S.~Fiorini]{Samuel Fiorini\textsuperscript{3}}
\email{sfiorini@ulb.ac.be}

\author[V.~Fisikopoulos]{Vissarion Fisikopoulos\textsuperscript{4}}
\email{vissarion.fisikopoulos@gmail.com}

\author[M.~Macchia]{Marco Macchia\textsuperscript{5}}
\email{mmacchia@ulb.ac.be}

\author[K.~Pashkovich]{Kanstantsin Pashkovich\textsuperscript{6}}
\address[6]{C \& O Department, University of Waterloo, Waterloo, Canada}
\email{kanstantsin.pashkovich@gmail.com}

\keywords{Polyhedral computation, polyhedral combinatorics, optimization, formal concept analysis, algorithm engineering}

\date{\today}
\subjclass[2010]{05A15, %
05C17, %
52B12, %
52B55, %
68W05, %
90C22}

\begin{document}

\maketitle

\begin{abstract}
A (convex) polytope $P$ is said to be \emph{$2$-level} if for every direction of hyperplanes which is facet-defining for $P$, the vertices of $P$ can be covered with two hyperplanes of that direction. The study of these polytopes is motivated by questions in combinatorial optimization and communication complexity, among others. In this paper, we present the first algorithm for enumerating all combinatorial types of $2$-level polytopes of a given dimension $d$, and provide complete experimental results for $d \leqslant 7$. Our approach is inductive: for each fixed $(d-1)$-dimensional $2$-level polytope $P_0$, we enumerate all $d$-dimensional $2$-level polytopes $P$ that have $P_0$ as a facet. This relies on the enumeration of the closed sets of a closure operator over a finite ground set. By varying the prescribed facet $P_0$, we obtain all $2$-level polytopes in dimension $d$.
\end{abstract}

%%%
%%% INTRODUCTION
%%%

\section{Introduction}

%\begin{defn}[2-level polytope]
A polytope $P \subseteq \R^d$ is said to be \emph{$2$-level} if every hyperplane $H$ that is facet-defining for $P$ has a parallel hyperplane $H'$ that contains all the vertices of $P$ which are not contained in $H$. In particular, if $P$ is empty or a point, it is $2$-level.
%\end{defn}

Some well known families of polytopes turn out to be $2$-level. For instance, cubes and cross-polytopes (more generally, Hanner polytopes~\cite{Hanner56}), Birkhoff polytopes~\cite{Birkhoff46} (more generally, polytopes of the form $P = \{x \in [0,1]^d \mid Ax = b\}$ where $A \in \Z^{m \times d}$ is totally unimodular and $b \in \Z^m$), order polytopes~\cite{Stanley86}, stable set polytopes of perfect graphs~\cite{Chvatal75} and their twisted prisms, the Hansen polytopes~\cite{Hansen77}, and spanning tree polytopes of series-parallel graphs~\cite{Grande16} all are $2$-level polytopes. Interestingly, it seems that there are only very few further examples of $2$-level polytopes known beyond this short list. This is in constrast with the fact that $2$-level polytopes appear, as we show next, in different areas of mathematics and theoretical computer science.

\subsection*{Motivations}

Let $V \subseteq \R^d$ be a finite set and $k$ be a positive integer. A polynomial $f(x) \in \R_{\leqslant 1}[x]$ of degree at most~$1$ is said to be \emph{$(1,k)$-SOS} on $V$ if there exist polynomials $g_1(x), \ldots, g_n(x) \in \R_{\leqslant k}[x]$ of degree at most~$k$ such that 
\[
f(x) = \sum_{i=1}^n g^2_i(x) \quad \text{for every } x \in V\,.
\]
The \emph{$k$-th theta body} of $V$ is the convex relaxation of the convex hull of $V$ defined by the linear inequalities $f(x) \geqslant 0$ where $f(x)$ is $(1,k)$-SOS on $V$. The \emph{theta rank} of $V$ is defined as the smallest $k$ such that this relaxation is exact, that is, the smallest $k$ such that for every valid linear inequality $f(x) \geqslant 0$, the affine form $f(x)$ is $(1,k)$-SOS on $V$. These notions were introduced by Gouveia, Parrilo and Thomas~\cite{Gouveia10}. Answering a question of Lov\'asz~\cite{Lovasz03}, they proved that a finite set has theta rank~$1$ if and only if it is the vertex set of a $2$-level polytope.

By virtue of this result and of the connection between sum-of-squares and semidefinite programming (see, e.g., \cite{Blekherman12} for more details), $2$-level polytopes are particularly well behaved from the point of view of optimization: any linear optimization problem over a $2$-level polytope in $\R^d$ can be reformulated as a semidefinite programming problem over $(d+1) \times (d+1)$ symmetric matrices. More precisely, it is known that $2$-level polytopes have minimum positive semidefinite rank (or positive semidefinite extension complexity) among all polytopes of the same dimension. In other words, $2$-level $d$-polytopes have positive semidefinite rank equal to $d+1$~\cite{Gouveia13}. 
For example, stable set polytopes of perfect graphs are one of the most prominent examples of $2$-level polytopes. To our knowledge, the fact that these polytopes have small positive semidefinite rank is the only known reason why one can efficiently find a maximum stable set in a perfect graph~\cite{Grotschel93}.

Moreover, $2$-level polytopes are also of interest in communication complexity since they provide interesting instances to test the \emph{log-rank conjecture}~\cite{Lovasz88}, one of the fundamental open problems in the area. This conjecture asserts that every $0/1$-matrix $M$ can be computed by a deterministic communication protocol of complexity at most $\polylog(\rk(M))$, which implies that the nonnegative rank of every $0/1$-matrix $M$ is at most $2^{\polylog(\rk(M))}$. Since every $d$-dimensional 2-level polytope has a slack matrix that is a $0/1$-matrix of rank $d+1$ (see Section~\ref{sec:slack-matrix}), the log-rank conjecture implies that every $2$-level $d$-polytope has nonnegative rank (or linear extension complexity) at most $2^{\polylog(d)}$. This is known for stable set polytopes of perfect graphs~\cite{Yannakakis91}, but appears to be open for general $2$-level polytopes.

There are more reasons to study $2$-level polytopes beyond those given above, in particular, in the context of volume computation and Erhart theory in which $2$-level polytopes originally appeared, see, e.g,~\cite{Stanley80}, and in statistics~\cite{Sullivant06}.

\subsection*{Contribution and outline}

In this paper we study the problem of enumerating all combinatorial types of $2$-level polytopes of a fixed dimension~$d$. This is equivalent to enumerating all $2$-level polytopes, up to \emph{affine} equivalence, see Lemma \ref{lem:2L-isomorphism}. For a definition of affine and combinatorial equivalence, see~\cite[Chapter 0]{Ziegler95}. Since every $2$-level polytope is affinely equivalent to a 0/1-polytope, one might think to compute all $2$-level polytopes of a given dimension simply by enumerating all 0/1-polytopes of that dimension and discarding the polytopes which are not $2$-level.
However, the complete enumeration of $d$-dimensional 0/1-polytopes has been implemented only for $d\leqslant 5$~\cite{Aichholzer00}. The author of the same paper has enumerated all $6$-dimensional 0/1-polytopes having up to 12 vertices, but the complete enumeration even for this low dimension is not expected to be feasible: the output of the combinatorial types alone is so huge that it is not currently possible to store it or search it efficiently~\cite{Ziegler00}. Thus for all but the lowest dimensions, there is no hope of working with a pre-existing list of 0/1-polytopes, and it is desirable to find an efficient algorithm which computes $2$-level polytopes from scratch. We present the first algorithm to enumerate all combinatorial types of $2$-level polytopes of a given dimension $d$. The algorithm uses new structural results on $2$-level polytopes which we develop here.

Our starting point is a pair of full-dimensional embeddings of a given $2$-level $d$-polytope defined in Section~\ref{sec:embeddings}. In one embedding, which we refer to as the $\mathcal H$-embedding, the facets have $0/1$-coefficients. In the other ---the $\mathcal V$-embedding--- the vertices have $0/1$-coordinates. The $\mathcal H$- and $\mathcal V$-embeddings are determined and linked by a structure, which we call a simplicial core (see Section~\ref{sec:simplicial-cores}).%, and are related to each other via some $d \times d$ unimodular, lower-triangular 0/1-matrix.

We describe the enumeration algorithm in detail in Sections~\ref{sec:enum-algorithm}, \ref{sec:closure} and \ref{sec:reduction}. It computes a complete list $L_d$ of non-isomorphic $2$-level $d$-polytopes using the list $L_{d-1}$ of $2$-level $(d-1)$-polytopes. The algorithm is based on the fact that $L_d$ is the union of $L_d(P_0)$ for $P_0\in L_{d-1}$, where $L_d(P_0)$ is the collection of all 2-level $d$-polytopes that have $P_0$ as a facet. Indeed, every facet of a $2$-level polytope is $2$-level (see Lemma~\ref{lem:faces} below) and thus the above union equals $L_d$. Our enumeration strategy is illustrated in Figure~\ref{fig:2L_graph}.

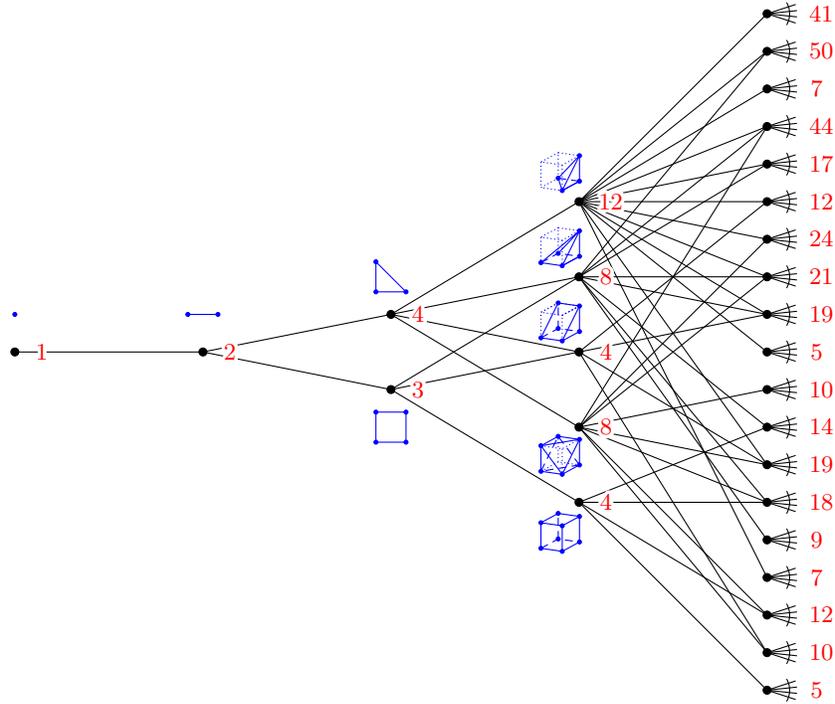
\begin{figure}[t!]\centering
\begin{tikzpicture}[line join=round,line width=.4pt]
\def\w{2.5}

\draw (0,0)--(1*\w,0);

\draw (1*\w,0)--(2*\w,.5);
\draw (1*\w,0)--(2*\w,-.5);

\draw (2*\w,.5)--(3*\w,2);
\draw (2*\w,.5)--(3*\w,1);
\draw (2*\w,.5)--(3*\w,0);
\draw (2*\w,.5)--(3*\w,-1);

\draw (2*\w,-.5)--(3*\w,1);
\draw (2*\w,-.5)--(3*\w,0);
\draw (2*\w,-.5)--(3*\w,-2);

\foreach \i in {1,...,10,13,16}{
\draw (3*\w,2)--(4*\w,5-.5*\i);}

\foreach \i in {2,4,5,8,9,12,14,15}{
\draw (3*\w,1)--(4*\w,5-.5*\i);}

\foreach \i in {6,9,13,18}{
\draw (3*\w,0)--(4*\w,5-.5*\i);}

\foreach \i in {4,7,8,11,13,14,17,18}{
\draw (3*\w,-1)--(4*\w,5-.5*\i);}

\foreach \i in {12,14,17,19}{
\draw (3*\w,-2)--(4*\w,5-.5*\i);}

{\tikzstyle{every node}=[text= red,fill=black,draw=black,circle,inner sep=0pt,minimum size=3pt]
\node[label={[xshift=2mm]right:\contour{white}{\color{red}1}}] at (0,0){};
\node[label={[xshift=2mm]right:\contour{white}{\color{red}2}}] at (1*\w,0){};

\node[label={[xshift=2mm]right:\contour{white}{\color{red}4}}] at (2*\w,.5){};
\node[label={[xshift=2mm]right:\contour{white}{\color{red}3}}] at (2*\w,-.5){};

\node[label={[xshift=2mm]right:\contour{white}{\color{red}12}}] at (3*\w,2){};
\node[label={[xshift=2mm]right:\contour{white}{\color{red}8}}] at (3*\w,1){};
\node[label={[xshift=2mm]right:\contour{white}{\color{red}4}}] at (3*\w,0){};
\node[label={[xshift=2mm]right:\contour{white}{\color{red}8}}] at (3*\w,-1){};
\node[label={[xshift=2mm]right:\contour{white}{\color{red}4}}] at (3*\w,-2){};

\node[text=red,label={[xshift=5mm]right:41}] at (4*\w,5-.5*1){};
\node[text=red,label={[xshift=5mm]right:50}] at (4*\w,5-.5*2){};
\node[text=red,label={[xshift=5mm]right:7}] at (4*\w,5-.5*3){};
\node[text=red,label={[xshift=5mm]right:44}] at (4*\w,5-.5*4){};
\node[text=red,label={[xshift=5mm]right:17}] at (4*\w,5-.5*5){};
\node[text=red,label={[xshift=5mm]right:12}] at (4*\w,5-.5*6){};
\node[text=red,label={[xshift=5mm]right:24}] at (4*\w,5-.5*7){};
\node[text=red,label={[xshift=5mm]right:21}] at (4*\w,5-.5*8){};
\node[text=red,label={[xshift=5mm]right:19}] at (4*\w,5-.5*9){};
\node[text=red,label={[xshift=5mm]right:5}] at (4*\w,5-.5*10){};
\node[text=red,label={[xshift=5mm]right:10}] at (4*\w,5-.5*11){};
\node[text=red,label={[xshift=5mm]right:14}] at (4*\w,5-.5*12){};
\node[text=red,label={[xshift=5mm]right:19}] at (4*\w,5-.5*13){};
\node[text=red,label={[xshift=5mm]right:18}] at (4*\w,5-.5*14){};
\node[text=red,label={[xshift=5mm]right:9}] at (4*\w,5-.5*15){};
\node[text=red,label={[xshift=5mm]right:7}] at (4*\w,5-.5*16){};
\node[text=red,label={[xshift=5mm]right:12}] at (4*\w,5-.5*17){};
\node[text=red,label={[xshift=5mm]right:10}] at (4*\w,5-.5*18){};
\node[text=red,label={[xshift=5mm]right:5}] at (4*\w,5-.5*19){};
}

\foreach \i in {1,...,19}{
\begin{scope}[shift={(4*\w,5-.5*\i)}]
\draw (0,0) -- (18:.4cm);
\draw (0,0) -- (6:.4cm);
\draw (0,0) -- (-6:.4cm);
\draw (0,0) -- (-18:.4cm);
\draw (0,0) ++(30:.3) arc (30:-30:.3);
\end{scope}
}

\begin{scope}[shift={(0,0+.5)},scale=.2]
\node[fill=blue,draw=blue,circle,inner sep=0pt,minimum size=1.5pt] at(0,0){};
\end{scope}

\begin{scope}[shift={(1*\w,0+.5)},scale=.2]
{\tikzstyle{every node}=[fill=blue,draw=blue,circle,inner sep=0pt,minimum size=1.5pt]
\node at(-1,0){};
\node at (1,0){};}
\draw[blue](-1,0)--(1,0);
\end{scope}

\begin{scope}[shift={(2*\w,.5+.5)},scale=.2]
{\tikzstyle{every node}=[fill=blue,draw=blue,circle,inner sep=0pt,minimum size=1.5pt]
\node at(-1,-1){};
\node at (1,-1){};
\node at (-1,1){};}
\draw[blue](-1,1)--(-1,-1)--(1,-1)--cycle;
\end{scope}

\begin{scope}[shift={(2*\w,-.5-.5)},scale=.2]
{\tikzstyle{every node}=[fill=blue,draw=blue,circle,inner sep=0pt,minimum size=1.5pt]
\node at(-1,-1){};
\node at (1,-1){};
\node at (-1,1){};
\node at (1,1){};}
\draw[blue](-1,1)--(-1,-1)--(1,-1)--(1,1)--cycle;
\end{scope}

\begin{scope}[shift={(2.9*\w,2+.4)},scale=.17,rotate around y=-18]

\tikzset{frame/.style={draw=blue,densely dotted},
hidden/.style={draw=blue,densely dashed}}
\draw[frame](-1,1,-1)--(1,1,-1)--(1,1,1)--(-1,1,1)--cycle;

\draw[frame](-1,-1,-1)--(-1,-1,1)--(1,-1,1);
\draw[frame](1,1,1)--(1,-1,1);
\draw[frame](-1,1,1)--(-1,-1,1);
\draw[frame](-1,-1,-1)--(-1,1,-1);

\draw[hidden](-1,-1,-1)--(1,-1,-1);
\draw[blue](-1,-1,-1)--(1,-1,1)--(1,-1,-1);
\draw[blue](1,1,-1)--(1,-1,1)--(1,-1,-1)--cycle;
\draw[blue](-1,-1,-1,)--(1,1,-1);

{\tikzstyle{every node}=[fill=blue,draw=blue,circle,inner sep=0pt,minimum size=1.5pt]
\node at(-1,-1,-1){};
\node at (1,-1,1){};
\node at (1,-1,-1){};
\node at (1,1,-1){};}	
\end{scope}

\begin{scope}[shift={(2.9*\w,1+.4)},scale=.17,rotate around y=-18]
\tikzset{frame/.style={draw=blue,densely dotted},
hidden/.style={draw=blue,densely dashed}}
\draw[frame](-1,1,-1)--(1,1,-1)--(1,1,1)--(-1,1,1)--cycle;

\draw[frame](1,1,1)--(1,-1,1);
\draw[frame](-1,1,1)--(-1,-1,1);
\draw[frame](-1,-1,-1)--(-1,1,-1);

\draw[hidden](-1,-1,1)--(-1,-1,-1)--(1,-1,-1);
\draw[blue](-1,-1,1)--(1,-1,1)--(1,-1,-1);
\draw[blue](1,1,-1)--(1,-1,1)--(1,-1,-1)--cycle;
\draw[blue](-1,-1,-1,)--(1,1,-1)--(-1,-1,1);

{\tikzstyle{every node}=[fill=blue,draw=blue,circle,inner sep=0pt,minimum size=1.5pt]
\node at(-1,-1,-1){};
\node at (1,-1,1){};
\node at (1,-1,-1){};
\node at (1,1,-1){};
\node at (-1,-1,1){};}	
\end{scope}

\begin{scope}[shift={(2.9*\w,0+.4)},scale=.17,rotate around y=-18]
\tikzset{frame/.style={draw=blue,densely dotted},
hidden/.style={draw=blue,densely dashed}}
\draw[frame](-1,1,-1)--(1,1,-1)--(1,1,1)--(-1,1,1)--cycle;

\draw[frame](1,1,-1)--(1,1,1)--(1,-1,1);
\draw[frame](1,1,-1)--(1,1,1);
\draw[frame](-1,1,-1)--(-1,1,1)--(-1,-1,1);

\draw[hidden](-1,-1,1)--(-1,-1,-1)--(1,-1,-1);
\draw[hidden](-1,-1,-1)--(-1,1,-1);

\draw[blue](-1,1,-1)--(1,1,-1)--(1,-1,1)--(-1,-1,1)--cycle;
\draw[blue](1,-1,1)--(1,-1,-1)--(1,1,-1);

{\tikzstyle{every node}=[fill=blue,draw=blue,circle,inner sep=0pt,minimum size=1.5pt]
\node at(-1,-1,-1){};
\node at(-1,1,-1){};
\node at (1,-1,1){};
\node at (1,-1,-1){};
\node at (1,1,-1){};
\node at (-1,-1,1){};}	
\end{scope}

\begin{scope}[shift={(2.9*\w,-1-.375)},scale=.17,rotate around y=-18]
\tikzset{frame/.style={draw=blue,densely dotted},
hidden/.style={draw=blue,densely dashed}}

\draw[frame](-1,-1,1)--(-1,-1,-1)--(1,-1,-1);
\draw[frame](-1,-1,-1)--(-1,1,-1);

\draw[frame](-1,1,1)--(1,1,1)--(1,1,-1);
\draw[frame](1,1,1)--(1,-1,1);

\draw[hidden](1,-1,-1)--(-1,-1,1)--(-1,1,-1)--cycle;

\draw[blue](-1,1,1)--(1,1,-1)--(1,-1,1)--cycle;
\draw[blue](1,-1,1)--(1,-1,-1)--(1,1,-1);
\draw[blue](1,-1,1)--(-1,-1,1)--(-1,1,1);
\draw[blue](-1,1,1)--(-1,1,-1)--(1,1,-1);

{\tikzstyle{every node}=[fill=blue,draw=blue,circle,inner sep=0pt,minimum size=1.5pt]
\node at(-1,1,-1){};
\node at (1,-1,1){};
\node at (1,-1,-1){};
\node at (1,1,-1){};
\node at (-1,1,1){};
\node at (-1,-1,1){};}	
\end{scope}

\begin{scope}[shift={(2.9*\w,-2-.4)},scale=.17,rotate around y=-18]
\tikzset{hidden/.style={draw=blue,densely dashed}}

\draw[hidden](-1,-1,1)--(-1,-1,-1)--(1,-1,-1);
\draw[hidden](-1,-1,-1)--(-1,1,-1);

\draw[blue](-1,1,-1)--(1,1,-1)--(1,1,1)--(-1,1,1)--cycle;
\draw[blue](-1,1,1)--(-1,-1,1)--(1,-1,1)--(1,-1,-1)--(1,1,-1);
\draw[blue](1,1,1)--(1,-1,1);

{\tikzstyle{every node}=[fill=blue,draw=blue,circle,inner sep=0pt,minimum size=1.5pt]
\node at(-1,-1,-1){};
\node at(-1,1,-1){};
\node at (1,-1,1){};
\node at (1,-1,-1){};
\node at (1,1,-1){};
\node at (-1,-1,1){};
\node at(-1,1,1){};
\node at(1,1,1){};}	
\end{scope}

\end{tikzpicture}

\caption{(Rotated) Hasse diagram of the poset of combinatorial types of 2-level polytopes with respect to inclusion. In the figure, an edge between the combinatorial types of the polytopes $P$ and $F$ indicates that $P$ has a facet that is isomorphic to $F$. Combinatorial types of a fixed dimension are sorted top to bottom lexicographically by their $f$-vector. Thus the first type is always that of the simplex. Labels on the nodes of the diagram are the number of times a given combinatorial type appears as a facet of another type.\label{fig:2L_graph}}
\end{figure}

For every polytope $P_0 \in L_{d-1}$, we perform the following steps. First, we embed $P_0$ in the hyperplane $\{x \in \R^d \,|\, x_1 = 0\} \simeq \R^{d-1}$ (using a $\mathcal{H}$-embedding). Then, we compute a collection $\mathcal{A}$ of point sets $A\subseteq \{x \in \R^d \,|\, x_1=1\}$ such that for each $2$-level polytope $P \in L_d(P_0)$, there exists $A \in \mathcal{A}$ with $P$ isomorphic to $\conv(P_0 \cup A)$. For each $A \in \mathcal{A}$, we compute the slack matrix of $Q \coloneqq \conv (P_0 \cup A)$ and add $Q$ to the list $L_d$, provided that it is $2$-level and not isomorphic to any of the polytopes already generated by the algorithm.

The efficiency of this approach depends greatly on how the collection $\mathcal{A}$ is chosen. Here, we exploit the $\mathcal H$- and $\mathcal V$-embeddings to define a proxy for the notion of $2$-level polytopes in terms of closed sets with respect to a certain closure operator and use this proxy to construct a suitable collection $\mathcal{A}$. Moreover, we develop tools to compute convex hulls combinatorially and avoid resorting to standard convex hull algorithms (see Section \ref{sec:impl}), providing a significative speedup in the computations.

We implement this algorithm and run it to obtain $L_d$ for $d \leqslant 7$. The outcome of our experiments is discussed in Section~\ref{sec:exp_results}. In particular, our results show that the number of combinatorial types of $2$-level $d$-polytopes is surprisingly small for low dimensions $d$. We conclude the paper by discussing research questions inspired by our experiments, see Section \ref{sec:final_remarks}. 

\subsection*{Previous related work}

The enumeration of all combinatorial types of point configurations and polytopes is a fundamental problem in discrete and computational geometry. Latest results in~\cite{Fukuda13} report complete enumeration of polytopes for dimension $d=3,4$ with up to $8$ vertices and $d=5,6,7$ with up to $9$ vertices. For $0/1$-polytopes this is done completely for $d \leqslant 5$ and $d=6$ with up to $12$ vertices~\cite{Aichholzer00}. In our approach, we use techniques from formal concept analysis, in particular we use a previously existing algorithm to enumerate all concepts of a relation, see~\cite{Ganter91,Kuznetsov02}.

Regarding 2-level polytopes, Grande and Sanyal~\cite{Grande16} give an excluded minor
characterization of $2$-level matroid base polytopes. Grande and Ru{\'e}~\cite{Grande15b} 
give a $O(c^d)$ lower bound on the number of 2-level matroid $d$-polytopes. Finally, 
Gouveia \emph{et al.}~\cite{Gouveia15} give a complete classification of polytopes with 
minimum positive semidefinite rank, which generalize $2$-level polytopes, in dimension $d=4$.

\subsection*{Conference versions}

A first version of the enumeration algorithm together with the experimental results for $d \leqslant 6$ appeared in~\cite{Bohn15}. An improvement of the algorithm that yielded enumeration results in $d=7$ appeared as part of~\cite{Fiorini16}. We point out that this 
paper is missing one $2$-level polytope for $d = 7$, see~\cite[Table 2]{Fiorini16}. The correct number of $2$-level $7$-polytopes is provided here, see Table~\ref{tbl:2Lnums} below. Besides this correction, the present paper contains a full correctness proof for the enumeration algorithm. Moreover, compared to~\cite{Fiorini16}, the algorithm was further optimized. The two main differences are: the more drastic reductions we perform on the ground set, and the fact that we bypass convex hull computations completely. These are replaced by a combinatorial polytope verification procedure. More details can be found in Section~\ref{sec:impl}.

%%%
%%% EMBEDDINGS
%%%

\section{Embeddings} \label{sec:embeddings}

In this section, after fixing some notation, we discuss the notion of simplicial core. This is then used to define the two types of embeddings that we use for $2$-level polytopes. Finally, we establish important properties of these embeddings that are used later in the enumeration algorithm.

\subsection{Notations}

For basic notions on polytopes that do not appear here, we refer the reader to \cite{Ziegler95}. We use $\vertex(P)$ to denote the vertex set of a polytope $P$. Let $d$ denote a positive integer, which we use most of the time to denote the dimension of the ambient space. A \emph{$d$-polytope} is simply a polytope of dimension $d$. We let $[d] \coloneqq \{1,\dots,d\}$. For $x \in \R^d$ and $E \subseteq [d]$, we let $x(E):=\sum_{i \in E} x_i$.

\subsection{Simplicial cores} \label{sec:simplicial-cores}

We introduce the structural notion of simplicial core of a polytope, which will be used in the enumeration algorithm to ease the counting of combinatorial types of $2$-level polytopes.

\begin{defn}[Simplicial core] \label{def:simplicial-core}
A \emph{simplicial core} for a $d$-polytope $P$ is a $(2d+2)$-tuple $(F_1,\ldots,F_{d+1};$ $v_1,\ldots,v_{d+1})$ of facets and vertices of $P$ such that each facet $F_i$ does not contain vertex $v_i$ but contains vertices $v_{i+1}$, \ldots, $v_{d+1}$.
\end{defn}

\begin{figure}[ht!]\centering
\begin{tikzpicture}[line join=round]
\tikzset{f-core/.style={line width=1.6pt,draw,fill=lightgray,fill opacity =.5},
vertex/.style={fill,draw,circle,inner sep=1.7pt,minimum width=1pt}}

%\draw[red] (0,0,0)--(3,0,0) node [right] at (3,0,0) {$x_2$};
%\draw[red] (0,0,0)--(0,0,3) node [below] at (0,0,3) {$x_3$};
%\draw[red] (0,0,0)--(0,3,0) node [right] at (0,3,0) {$x_1$};

\draw[f-core] (0,0,0)-- (0,0,2) -- (2,0,2) -- (2,0,0) -- cycle;
\draw[f-core] (0,0,0)-- (0,2,0) -- (2,2,0) -- (2,0,0) -- cycle;
\draw[f-core] (0,0,0)-- (0,2,0) -- (0,2,2) -- (0,0,2) -- cycle;
\draw (0,2,2)-- (2,2,2);
\draw[f-core] (2,0,0)-- (2,2,0) -- (2,2,2) -- (2,0,2) -- cycle;

\node[vertex,fill=white] at (2,2,0) {};
\node[vertex,fill=white] at (0,2,2) {};
\node[vertex,fill=white] at (2,0,2) {};
\node[vertex,fill=white] at (2,2,2) {};

\node[vertex,label={right:$v_4$}] at (2,0,0) {};
\node[vertex,label={left:$v_2$}] at (0,0,2) {};
\node[vertex,label={above:$v_1$}] at (0,2,0) {};
\node[vertex,label={left:$v_3$}] at (0,0,0) {};

\node at (1,0,1) {$F_1$};
\node[below left] at (1,1,0) {$F_2$};
\node at (2,1,1) {$F_3$};
\node at (0,1,1) {$F_4$};

\end{tikzpicture}
\caption{Some simplicial core $(F_1,F_2,F_3,F_4;v_1,v_2,v_3,v_4)$ for the $3$-dimensional cube.}
\end{figure}
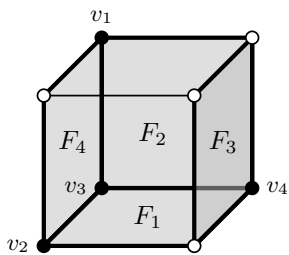
The concept of simplicial core appeared in relation with $2$-level polytopes  already in \cite{Stanley80} and with polytopes of minimum nonnegative rank in \cite{GouvRobThom13}. 

The following lemma proves the existence of simplicial cores. Although this is known (see \cite[Proposition 3.2]{GouvRobThom13}), we provide a proof for completeness.

\begin{lem} \label{lem:simplicial_cores}
For every $d$-polytope $P$ there exist facets $F_1$, \ldots, $F_{d+1}$ and vertices $v_1$, \ldots, $v_{d+1}$ of $P$ such that $(F_1,\ldots,F_{d+1};v_1,\ldots,v_{d+1})$ is a simplicial core for $P$.
\end{lem}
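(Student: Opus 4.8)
The plan is to construct the simplicial core greedily, peeling off one vertex at a time by choosing, at each stage, a facet that separates a chosen vertex from a remaining ``active'' face of decreasing dimension. I will maintain the invariant that after $i$ steps I have a chain of faces $P = G_0 \supsetneq G_1 \supsetneq \cdots \supsetneq G_i$ with $\dim G_j = d - j$, together with facets $F_1,\dots,F_i$ of $P$ and vertices $v_1,\dots,v_i$ of $P$, such that for each $j \le i$: (a) $v_j \in G_{j-1} \setminus G_j$, (b) $F_j \cap G_{j-1} = G_j$, and (c) $v_{j+1},\dots,v_i$ all lie in $G_j$ (this is vacuous until later vertices are chosen, so it is cleanest to state it as: $F_j$ contains $G_j$ hence all vertices chosen after step $j$). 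The final face $G_d$ will be a single vertex, which I will take to be $v_{d+1}$, and I will take $F_{d+1}$ to be any facet of $P$ not containing $v_{d+1}$.

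First I would set $G_0 \coloneqq P$. For the inductive step, suppose $G_{i-1}$ is a face of $P$ of dimension $d - i + 1 \ge 1$. Pick any vertex $v_i$ of $G_{i-1}$. Since $\dim G_{i-1} \ge 1$, the face $G_{i-1}$ is not just the point $v_i$, so there is a facet $G_i$ of the polytope $G_{i-1}$ with $v_i \notin G_i$ (every polytope of dimension $\ge 1$ has a facet avoiding any prescribed vertex, since the vertex is cut off by some facet). Now $G_i$ is a face of $G_{i-1}$, hence a face of $P$, of dimension $d - i$. The key point is to lift the facet $G_i$ of $G_{i-1}$ to a facet $F_i$ of $P$: since $G_i$ is a facet of the face $G_{i-1}$, and faces of $G_{i-1}$ are exactly intersections of $G_{i-1}$ with faces of $P$, there is a facet $F_i$ of $P$ with $F_i \cap G_{i-1} = G_i$. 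Then $v_i \notin F_i$ (as $v_i \notin G_i$ and $v_i \in G_{i-1}$), and $F_i \supseteq G_i \supseteq G_{i+1} \supseteq \cdots \supseteq G_d$, so $F_i$ contains all later $G_j$ and in particular all vertices $v_{i+1},\dots,v_{d+1}$ once they are chosen. Iterating down to $G_d$, a vertex, set $v_{d+1} \coloneqq G_d$ and let $F_{d+1}$ be any facet of $P$ missing $v_{d+1}$ (again using $\dim P \ge 1$; if $d = 0$ the statement is trivial and handled separately).

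To finish I would verify the simplicial-core conditions directly: for each $i \in [d+1]$, $F_i$ does not contain $v_i$ — for $i \le d$ this is condition (a) of the construction, and for $i = d+1$ it is the choice of $F_{d+1}$; and $F_i$ contains $v_{i+1},\dots,v_{d+1}$ — for $i \le d$ because $F_i \supseteq G_i$ and each $v_j$ with $j > i$ lies in $G_{j-1} \subseteq G_i$, and for $i = d+1$ the condition is empty. I expect the only genuinely non-routine point to be the lifting claim: that a facet of a face $G$ of $P$ is the intersection of $G$ with a facet of $P$. This follows from the standard fact (see \cite[Chapter 2]{Ziegler95}) that the faces of $G$ are precisely the faces of $P$ contained in $G$, together with the observation that a face $G'$ of $P$ with $G' \subsetneq G$ and $\dim G' = \dim G - 1$ must be contained in some facet $F$ of $P$ (otherwise $G'$ would lie in the relative interior of $P$, contradicting that it is a proper face), and then $F \cap G$ is a face of $G$ strictly between $G'$ and $G$ in dimension, forcing $F \cap G = G'$. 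Everything else is bookkeeping about dimensions decreasing by exactly one at each step, which is guaranteed because $G_i$ is a facet of $G_{i-1}$.
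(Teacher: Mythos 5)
Your proof is correct and is essentially the paper's argument: the paper proceeds by induction on dimension, recursing into a facet and lifting the facets of that facet to facets of $P$, which when unrolled produces exactly your flag $P = G_0 \supsetneq G_1 \supsetneq \cdots \supsetneq G_d$ with the same choices of $v_i$ and $F_i$. The only cosmetic differences are that you lift $G_i$ to a facet of $P$ in one step rather than one dimension at a time via the diamond property --- and in that lifting step your justification should additionally note that the facet $F \supseteq G'$ can be chosen with $G \not\subseteq F$ (possible since $G'$ is the intersection of the facets containing it), so that $F \cap G \neq G$ and the dimension count applies.
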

\begin{proof}
The proof is by induction on the dimension. For a $1$-polytope $P:=\conv(\{v_1,v_2\})$ we can take $F_1 \coloneqq \{v_2\}$ and $F_2 \coloneqq \{v_1\}$. For the induction step, let $P'$ be a facet of $P$. Thus $P'$ is a $(d-1)$-polytope. By the induction hypothesis, there are facets $F'_2, \ldots, F'_{d+1}$ and vertices $v_2,\ldots, v_{d+1}$ of $P'$ such that $(F'_2, \ldots, F'_{d+1};v_2,\ldots,v_{d+1})$ is a simplicial core for $P'$. Now let $F_1 \coloneqq P'$ and, for $i$, $2 \leqslant i \leqslant d+1$, let $F_i$ be the unique facet of $P$ that contains $F'_i$ and is distinct from $F_1$. Let $v_1$ be any vertex of $P$ which does not belong to the facet $F_1$. 

By construction, each $F_i$ contains $v_{i+1}, \ldots, v_{d+1}$. Moreover, $F_1$ does not contain $v_1$ and, for $i \geq 2$, facet $F_i$ does not contain $v_i$ because otherwise $F'_i = F_1 \cap F_i$ would contain $v_i$, contradicting our hypothesis.
\end{proof}

We point out that the proof of Lemma~\ref{lem:simplicial_cores} gives us more than the existence of simplicial cores: for every polytope $P$, every facet $F_1$ of $P$ and every simplicial core $\Gamma' \coloneqq (F'_2, \ldots, F'_{d+1};v'_2,\ldots,v'_{d+1})$ of $F_1$, there exists a simplicial core $\Gamma \coloneqq (F_1,F_2,\ldots,F_{d+1};v_1,v_2,\ldots,v_{d+1})$ of $P$ \emph{extending} $\Gamma'$ in the sense that $F'_i = F_1 \cap F_i$ and $v'_i = v_i$ for $i \geqslant 2$. 

Finally, we make the following observation. Let $(F_1,\ldots,F_{d+1};v_1,\ldots,v_{d+1})$ be a simplicial core of some polytope $P$. For each $i$, the affine hull of $F_i$ contains $v_j$ for $j > i$, but does not contain $v_i$. Therefore, the vertices of a simplicial core are affinely independent. That is, $v_1$, \ldots, $v_{d+1}$ form the vertex set of a $d$-simplex contained in $P$.

\subsection{Slack matrices and slack embeddings} \label{sec:slack-matrix}

\begin{defn}[Slack matrix \cite{Yannakakis91}]
The \emph{slack matrix} of a polytope $P \subseteq \R^{d}$ with $m$ facets $F_1$, \ldots, $F_m$ and $n$ vertices $v_1$, \ldots, $v_n$ is the $m \times n$ nonnegative matrix $S = S(P)$ such that $S_{ij}$ is the \emph{slack} of the vertex $v_j$ with respect to the facet $F_i$, that is, $S_{ij} = g_i(v_j)$ where $g_i : \R^{d} \to \R$ is any affine form such that $g_i(x) \geqslant 0$ is valid for $P$ and $F_i = \{x \in P \mid g_i(x) = 0\}$.
\end{defn}

The slack matrix of a polytope is defined up to scaling its rows by positive reals.
Notice that simplicial cores for $P$ correspond to $(d+1) \times (d+1)$ submatrices of $S(P)$ that are invertible and lower-triangular, for some ordering of rows and columns.

The slack matrix provides a canonical way to embed any polytope, which we call the \emph{slack embedding}. This embedding maps every vertex $v_j$ to the corresponding column $S^j \in \R_+^m$ of the slack matrix $S = S(P)$. The next lemma shows that every polytope is affinely isomorphic to the convex hull of the columns of its slack matrix.

\begin{lem} \label{lem:full_slack_embed}
Let $P$ be a $d$-polytope having facet-defining inequalities $g_1(x) \geqslant 0$, \ldots, $g_m(x) \geqslant 0$, and vertices $v_1$, \ldots, $v_n$. If $\sigma$ denotes a map from the affine hull $\aff(P)$ of $P$ to $\R^m$ defined by $\sigma(x)_i \coloneqq g_i(x)$ for all $x \in \aff(P)$, then the polytopes $P$ and $\sigma(P)$ are affinely equivalent.
\end{lem}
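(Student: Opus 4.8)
The plan is to observe that $\sigma$ is an affine map from the $d$-dimensional affine space $\aff(P)$ into $\R^m$ — each $g_i$ is an affine form, so each coordinate $\sigma(x)_i = g_i(x)$ is affine — and then to prove that $\sigma$ is \emph{injective} on $\aff(P)$. Once injectivity is established, $\sigma(\aff(P))$ is an affine subspace of $\R^m$ of dimension $d$, the restriction $\sigma\colon \aff(P)\to\sigma(\aff(P))$ is a bijective affine map and hence an affine isomorphism, and it carries $P=\conv(v_1,\dots,v_n)$ onto $\sigma(P)=\conv(\sigma(v_1),\dots,\sigma(v_n))$. Such an affine isomorphism between the affine hulls taking $P$ to $\sigma(P)$ is precisely what it means for $P$ and $\sigma(P)$ to be affinely equivalent (cf.\ \cite[Chapter~0]{Ziegler95}), so this finishes the proof. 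We may assume $P$ is nonempty and $d\geqslant 1$, the remaining cases being trivial.

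The only real content is the injectivity step, and here I would use the simplicial core. By Lemma~\ref{lem:simplicial_cores}, after relabelling facets and vertices we may assume that $(F_1,\dots,F_{d+1};v_1,\dots,v_{d+1})$ is a simplicial core for $P$. Consider the ``truncation'' $\tau\colon\aff(P)\to\R^{d+1}$, $\tau(x):=(g_1(x),\dots,g_{d+1}(x))$, which is the composition of $\sigma$ with the projection onto the first $d+1$ coordinates; it suffices to show that $\tau$ is injective. The points $v_1,\dots,v_{d+1}$ are affinely independent — they are the vertices of a $d$-simplex contained in $P$, as noted after Lemma~\ref{lem:simplicial_cores} — and their images $\tau(v_1),\dots,\tau(v_{d+1})$ are exactly the columns of the $(d+1)\times(d+1)$ submatrix $S'$ of $S(P)$ with entries $S'_{ij}=g_i(v_j)$, $1\leqslant i,j\leqslant d+1$. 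As observed after the definition of the slack matrix, $S'$ is invertible (being lower-triangular with positive diagonal for a suitable ordering of rows and columns), so its columns are linearly, and a fortiori affinely, independent. An affine map that sends $d+1$ affinely independent points to $d+1$ affinely independent points is injective on their affine span, which is $\aff(P)$; hence $\tau$, and therefore $\sigma$, is injective on $\aff(P)$.

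This completes the argument, and it also shows that $\sigma(P)$ is the convex hull of the columns $S^1,\dots,S^n$ of the slack matrix, i.e.\ the slack embedding described in the text. The injectivity step is essentially the only obstacle, and it is short; if one prefers to avoid simplicial cores entirely, an alternative is a direct boundedness argument. Suppose $\sigma(x)=\sigma(y)$ for distinct $x,y\in\aff(P)$ and put $w:=y-x\neq 0$. Then $w$ is parallel to $\aff(P)$, and since each $g_i$ is affine, $g_i(x+tw)=g_i(x)+t\bigl(g_i(y)-g_i(x)\bigr)=g_i(x)\geqslant 0$ for every index $i$ and every $t\in\R$. As $P=\aff(P)\cap\bigcap_i\{x : g_i(x)\geqslant 0\}$, the whole line $\{x+tw : t\in\R\}$ lies in $P$, contradicting the fact that a polytope is bounded; hence $\sigma$ is injective on $\aff(P)$.
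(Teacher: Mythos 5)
Your main argument is correct and is exactly the paper's proof, spelled out: the paper likewise observes that $\sigma$ is affine and injective because it sends the vertices of a simplicial core to affinely independent points, and your lower-triangular invertible submatrix $S'$ is precisely the justification the paper leaves implicit. The optional boundedness argument at the end has one small slip --- $g_i(x)\geqslant 0$ need not hold for an arbitrary $x\in\aff(P)$ --- but it is easily repaired by noting that each $g_i$ is constant along the direction $w$ and then translating a point of $P$ (rather than $x$ itself) along $w$.
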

\begin{proof}
The map $\sigma : \aff(P) \to \R^m$ is affine, and injective because it maps the vertices of any simplicial core for $P$ to affinely independent points. The result follows.
\end{proof}

By definition, a polytope $P$ is $2$-level if and only if $S(P)$ can be scaled to be a 0/1 matrix. Given a $2$-level polytope, we henceforth always assume that its facet-defining inequalities are scaled so that the slacks are 0/1. Thus, the slack embedding of a $2$-level polytope depends only on the \emph{support}\footnote{The \emph{support} of a matrix $M$ is the collection of all pairs $(i,j)$ such that $M_{i,j} \neq 0$.} of its slack matrix, which only depends on its combinatorial structure. As a consequence, we have the following result: 

\begin{lem} \label{lem:2L-isomorphism}
Two $2$-level polytopes are affinely equivalent if and only if they have the same combinatorial type.
\end{lem}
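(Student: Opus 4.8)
The plan is to prove both directions of the equivalence in Lemma~\ref{lem:2L-isomorphism}. The forward direction is immediate: affinely equivalent polytopes have the same face lattice, hence the same combinatorial type, and this holds for arbitrary polytopes with no use of the $2$-level hypothesis. So the content is entirely in the converse: if two $2$-level polytopes $P$ and $P'$ have the same combinatorial type, then they are affinely equivalent.

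For the converse, the key idea is that a $2$-level polytope is rigidly pinned down by its combinatorics via the slack embedding. First I would recall that, by the discussion preceding the lemma, once we scale the facet-defining inequalities of a $2$-level polytope so that all slacks lie in $\{0,1\}$, the slack matrix $S(P)$ becomes a $0/1$ matrix whose support is exactly the vertex-facet incidence data, i.e.\ it depends only on the combinatorial type of $P$. Then I would invoke Lemma~\ref{lem:full_slack_embed}: the slack embedding $\sigma : \aff(P) \to \R^m$, $x \mapsto (g_1(x),\dots,g_m(x))$, realizes $P$ as a polytope $\sigma(P)$ affinely equivalent to $P$ whose vertex set is precisely the set of columns of $S(P)$. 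Doing the same for $P'$ gives a polytope $\sigma'(P')$ affinely equivalent to $P'$ whose vertices are the columns of $S(P')$.

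Now I would fix a combinatorial isomorphism between $P$ and $P'$; this gives a bijection between the facets of $P$ and those of $P'$ and a bijection between the vertices, compatible with incidences. Reordering the rows of $S(P')$ according to the facet bijection and the columns according to the vertex bijection, we get a matrix whose support equals that of $S(P)$. Since both matrices are $0/1$ with the same support, they are literally the same matrix (up to this relabeling): $S(P) = S(P')$ after the permutation. Hence the column sets $\{S^j\}$ and $\{(S')^j\}$ coincide (as labeled sets of points in $\R^m$, after discarding the permutation of coordinates, which is itself an affine — indeed linear — isomorphism of $\R^m$). Therefore $\sigma(P)$ and $\sigma'(P')$ are the same polytope up to a coordinate permutation, so $P \cong_{\mathrm{aff}} \sigma(P) \cong_{\mathrm{aff}} \sigma'(P') \cong_{\mathrm{aff}} P'$, chaining the affine equivalences.

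The only slightly delicate point — the step I would be most careful about — is making sure that the combinatorial isomorphism really does identify the supports of the two $0/1$ slack matrices exactly, and that the slack-matrix normalization (scaling rows to make slacks $0/1$) is canonical enough that no residual positive scaling factors survive: since each row must have entries in $\{0,1\}$ and is not identically zero, the scaling is uniquely determined, so there is genuinely no freedom left. One should also note that the columns of a $0/1$ matrix are automatically distinct when they index distinct vertices (distinct vertices have distinct sets of containing facets), so the vertex bijection is forced and consistent with the column equality. With these observations in place the argument is complete.
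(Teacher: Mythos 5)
Your proof is correct and follows essentially the same route as the paper: both directions are handled by observing (via Lemma~\ref{lem:full_slack_embed}) that the slack embedding realizes $P$ as the convex hull of the columns of $S(P)$, together with the fact that for a $2$-level polytope the normalized $0/1$ slack matrix is exactly the facet--vertex non-incidence matrix and hence determined by the combinatorial type. The paper's proof is just a terser statement of the same argument; your added remarks on the uniqueness of the row scaling and the distinctness of columns are correct fillings-in of details the paper leaves implicit.
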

\begin{proof}
The result follows directly from Lemma~\ref{lem:full_slack_embed} and the fact that, since 
the slack matrix of a $2$-level polytope $P$ is the facet vs.\ vertex non-incidence matrix of $P$, it depends only on the combinatorial type of $P$.
\end{proof}

For the sake of completeness, we state the following basic result about $2$-level polytopes, which easily follows from our discussion of slack matrices (see also~\cite[Corollary 4.5 (2)]{Gouveia10}).

\begin{lem}\label{lem:faces}
Each face of a $2$-level polytope is a $2$-level polytope.
\end{lem}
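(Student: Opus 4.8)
The plan is to reduce to facets and then induct on the dimension. Every proper face $Q$ of a polytope $P$ is contained in some facet $F$ of $P$ (indeed $Q$ equals the intersection of the facets of $P$ containing it, and this family is nonempty when $Q \neq P$), and $Q$ is then a face of $F$ as well. Hence, arguing by induction on $\dim P$ --- with the cases $\dim P \leqslant 1$ being trivial, since then every proper face is empty or a point, which are $2$-level by definition --- it is enough to prove that \emph{every facet of a $2$-level polytope is $2$-level}.

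So let $F$ be a facet of a $2$-level $d$-polytope $P$, with $\dim F = d-1$; we may assume $d \geqslant 2$. Fix a facet $G$ of $F$. To check the $2$-level condition for $F$ at $G$, I would produce a facet $F'$ of $P$ with $G \subseteq F'$ and $F \not\subseteq F'$. Such an $F'$ exists because $G$ is a nonempty proper face of $P$, hence equals the intersection of all facets of $P$ containing it; if $F$ were contained in every such facet, we would get $F \subseteq G$, contradicting $G \subsetneq F$. Moreover $F \cap F'$ is then a proper face of $F$ (as $F' \neq F$ and $F \not\subseteq F'$) containing the codimension-$1$ face $G$ of $F$, so a dimension count forces $F \cap F' = G$. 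This combinatorial identity is the only point requiring any care; the rest is bookkeeping.

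Finally, scale the facet-defining inequalities of $P$ so that all slacks lie in $\{0,1\}$ --- possible precisely because $P$ is $2$-level --- and let $h(x) \geqslant 0$ be the scaled inequality defining $F'$. Its restriction to $\aff(F)$ is an affine form taking only the values $0$ and $1$ on $\vertex(F) = \vertex(P) \cap F$, and for $v \in \vertex(F)$ one has $h(v) = 0$ exactly when $v \in F'$, i.e.\ when $v \in F \cap F' = G$. Since $G$ is neither empty nor all of $F$, the form $h|_{\aff(F)}$ is nonconstant, so $\{x \in \aff(F) \mid h(x)=0\}$ is the facet-defining hyperplane $\aff(G)$, and $\{x \in \aff(F) \mid h(x) = 1\}$ is a parallel hyperplane containing every vertex of $F$ not contained in $G$; this is exactly the $2$-level condition for the direction determined by $G$. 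As $G$ ranges over the facets of $F$, we conclude that $F$ is $2$-level, which closes the induction. Equivalently, the same computation shows that the slack matrix of $F$ is obtained from the $0/1$ slack matrix of $P$ by deleting rows and columns (under the injection $G \mapsto F'$), hence is again a $0/1$ matrix, so $F$ is $2$-level by the characterization recorded in Section~\ref{sec:slack-matrix}.
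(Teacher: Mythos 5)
Your proof is correct and ultimately rests on the same idea as the paper's: the paper simply observes that the slack matrix of a face is a ($0/1$) submatrix of $S(P)$, and your argument --- extending each facet $G$ of $F$ to a facet $F'$ of $P$ with $F\cap F'=G$ and restricting the scaled inequality $h(x)\geqslant 0$ to $\aff(F)$ --- is exactly the verification of that submatrix claim, spelled out in full (as you note in your final sentence). The only difference is presentational: you reduce to facets and induct, whereas the paper applies the submatrix fact to an arbitrary face in one step.
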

\begin{proof}
Let $S = S(P) \in \{0,1\}^{m \times n}$ be the slack matrix of some $2$-level polytope $P$, and let $F$ be any face of $P$. If $F$ is empty or a point, then $F$ is $2$-level by definition. Otherwise,  the slack matrix of $F$ is a submatrix of $S$, which implies that $F$ is $2$-level.
\end{proof}

\subsection{$\mathcal H$- and $\mathcal V$-embeddings} \label{sec:VH-embeddings}
Although canonical, the slack embedding is never full-dimensional, which can be a disadvantage. To remedy this, we use simplicial cores to define two types of embeddings that are full-dimensional. 
Let $P$ be a $2$-level $d$-polytope with $m$ facets and $n$ vertices, and let $\Gamma \coloneqq (F_1,\ldots,F_{d+1};v_1,\ldots,v_{d+1})$ be a simplicial core for $P$. Since $v_1$,\ldots, $v_{d+1}$ are affinely independent, the images of $v_1$,\ldots, $v_{d+1}$ uniquely define an affine embedding of $P$.

The slack matrix $S(P)$ is a $0/1$ matrix. Moreover, we assume that the rows and columns of the slack matrix $S(P)$ are ordered compatibly with the simplicial core $\Gamma$, so that the $i$-th row of $S(P)$ corresponds to facet $F_i$ for $1 \leqslant i \leqslant d+1$ and the $j$-th column of $S(P)$ corresponds to vertex $v_j$ for $1 \leqslant j \leqslant d+1$.

\begin{defn}[$\mathcal{H}$-embedding]\label{def:embedding}
The \emph{$\mathcal{H}$-embedding} of $P$ with respect to the simplicial core $\Gamma \coloneqq (F_1,\ldots,F_{d+1};v_1,\ldots,v_{d+1})$  is defined by mapping $v_j$, $1 \leqslant j \leqslant d$ to the unit vector $e_j$ of $\R^d$ and mapping $v_{d+1}$ to the origin. 
\end{defn}

\begin{defn}[$\mathcal{V}$-embedding]
Let $S \coloneqq S(P)$ be the slack matrix of $P$. The \emph{$\mathcal V$-embedding} of $P$ with respect to the simplicial core $\Gamma \coloneqq (F_1,\ldots,F_{d+1};v_1,\ldots,v_{d+1})$ is defined by mapping $v_j$, $1 \leqslant j \leqslant d+1$  to the point of $\R^d$ whose $i$-th coordinate is $S_{ij}$ for  $1 \leqslant i \leqslant d$. 
\end{defn}

Equivalently, the $\mathcal V$-embedding can be defined by the mapping
\(
x \mapsto Mx,
\)
where $M = M(\Gamma)$ is the top left $d \times d$ submatrix of $S(P)$ and $x \in \R^d$ is a point in the $\mathcal H$-embedding. In fact, the matrix $M$ maps the vertices of the simplicial core to the columns of the slack-matrix of $P$, that are precisely the vertices of $P$ in the $\mathcal V$-embedding.
The next lemma provides the main properties of these embeddings.

\begin{lem} \label{lem:embeddings}
Let $P$ be a $2$-level $d$-polytope and $\Gamma \coloneqq (F_1,\ldots,F_{d+1};v_1,\ldots,v_{d+1})$ be some simplicial core for $P$. Then the following properties hold:

\begin{itemize}
\item in the  $\mathcal H$-embedding of $P$ with respect to $\Gamma$, all the facets are of the form $x(E) \leqslant 1$ or $x(E) \geqslant 0$ for some nonempty $E \subseteq [d]$. 

\item in the $\mathcal V$-embedding of $P$ with respect to $\Gamma$, the $i$-th coordinate of a vertex is the slack with respect to facet $F_i$ of the corresponding vertex in $P$. In particular, in the $\mathcal V$-embedding all the vertices have $0/1$-coordinates.
\end{itemize}
\end{lem}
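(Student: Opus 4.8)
The plan is to prove the two bullets essentially independently, using in each case that the simplicial core $\Gamma$ places the $d+1$ affinely independent vertices $v_1,\dots,v_{d+1}$ at the convenient points $e_1,\dots,e_d$ and the origin in the $\mathcal H$-embedding, so that an affine function is completely determined by its $d+1$ values there. Throughout I fix once and for all the $0/1$ normalization of the slack matrix $S=S(P)$, so that the facet-defining form $g_i$ of $F_i$ satisfies $g_i(v_j)=S_{ij}$ for all $j$, and I use this same normalization for every facet inequality that appears.

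For the first bullet, I would first note that the $\mathcal H$-embedding, being an affine isomorphism, maps $P$ onto a $2$-level $d$-polytope $P_{\mathcal H}$ whose vertex set contains $e_1,\dots,e_d$ and $0$. Let $F$ be any facet of $P_{\mathcal H}$. Since $P_{\mathcal H}$ is $2$-level, I can pick an affine form $g(x)=c_0+\sum_{j=1}^{d} c_j x_j$ with $g\geqslant 0$ valid for $P_{\mathcal H}$, with $F=\{x\in P_{\mathcal H}:g(x)=0\}$, and with $g$ taking only the values $0$ and $1$ on $\vertex(P_{\mathcal H})$ (normalize so that the hyperplane parallel to $\{g=0\}$ through the remaining vertices is $\{g=1\}$). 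Evaluating $g$ at $0$ gives $c_0\in\{0,1\}$, and at each $e_j$ gives $c_0+c_j\in\{0,1\}$. If $c_0=0$ then every $c_j\in\{0,1\}$, so $g(x)=x(E)$ with $E:=\{j:c_j=1\}$ and the facet is $x(E)\geqslant 0$; if $c_0=1$ then every $c_j\in\{-1,0\}$, so $g(x)=1-x(E)$ with $E:=\{j:c_j=-1\}$ and the facet is $x(E)\leqslant 1$. In both cases $E\neq\emptyset$, since otherwise $g$ would be identically zero or a nonzero constant and could not cut out a facet.

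For the second bullet, I would use that the $\mathcal V$-embedding $w$ is, by definition, the affine map on $\aff(P)$ sending $v_j$ to $(S_{1j},\dots,S_{dj})$ for $1\leqslant j\leqslant d+1$, these $d+1$ affinely independent points spanning $\aff(P)$ (as observed earlier, the vertices of a simplicial core are affinely independent). Fix $i\in[d]$ and consider the two affine functions $\aff(P)\to\R$ given by $x\mapsto (w(x))_i$ and $x\mapsto g_i(x)$. Both take the value $S_{ij}$ at $v_j$ for every $j\in\{1,\dots,d+1\}$ --- the first by the definition of $w$, the second by the definition of the slack matrix --- so they agree on an affine basis of $\aff(P)$ and therefore coincide. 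Hence for every vertex $v$ of $P$ the $i$-th coordinate of $w(v)$ equals $g_i(v)$, the slack of $v$ with respect to $F_i$, which is $0$ or $1$ because $P$ is $2$-level.

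I do not expect a genuine obstacle: both parts reduce to the elementary fact that two affine functions agreeing on the affinely independent points $v_1,\dots,v_{d+1}$, which affinely span $\aff(P)$, must be equal. The one step requiring some care is the nondegeneracy argument in the first bullet --- checking that the set $E$ produced is nonempty, i.e.\ that the normalized form is genuinely facet-defining rather than constant --- together with making sure the $0/1$ normalization of $S(P)$ is used consistently for the slack matrix, the $\mathcal V$-embedding, and the facet inequalities.
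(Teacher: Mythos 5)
Your proof is correct and follows essentially the same route as the paper's: for the first bullet, normalize the facet form to take $0/1$ values on the vertices and read off the coefficients from its values at $0$ and $e_1,\dots,e_d$; for the second, observe that the $i$-th coordinate of the $\mathcal V$-embedding and the slack with respect to $F_i$ are affine forms agreeing on the affinely independent points $v_1,\dots,v_{d+1}$ spanning $\aff(P)$. The only difference is that you spell out the case analysis on $c_0$ and the nonemptiness of $E$, which the paper leaves implicit.
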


\begin{proof}
Let $g(x):=a_0 + \sum_{i=1}^d a_i x_i \geqslant 0$ be a facet defining inequality in the  $\mathcal H$-embedding. Since $P$ is $2$-level, we may assume that $g(x)$ takes $0/1$ values on the vertices of the $\mathcal H$-embedding. That is, on $e_j$, $1\leqslant j \leqslant d$ and the origin. Thus, $g(x)$ has either the form $\sum_{i \in E} x_i$ or the form $1 - \sum_{i \in E} x_i$ for some nonempty $E \subseteq [d]$.

Consider the $\mathcal V$-embedding with respect to $\Gamma$ and fix $i$, $1 \leqslant i \leqslant d$ arbitrarily. The $i$-th coordinate of a point in the $\mathcal V$-embedding and the map computing the slack with respect to the facet $F_i$ are two affine forms on $\aff(P)$, and their value coincide on the vertices $v_1, \ldots, v_{d+1}\in P$. The statement follows, since the affine hull of $v_1, \ldots, v_{d+1}$ equals $\aff(P)$.
\end{proof}

We use the notation $M = M(\Gamma)$ for the top $d \times d$ submatrix of $S(P)$, and we call this submatrix the \emph{embedding transformation matrix} of $\Gamma$. Note that every embedding transformation matrix $M$ is unimodular. Indeed, $M$ is an invertible, lower-triangular, $0/1$-matrix. Thus $\det(M)=1$. 

Below, we use the shorthand $M \cdot X$ for the set $\{Mx \mid x \in X\}$ where $M$ is a $d \times d$ matrix and $X$ is a subset of $\R^d$.

\begin{cor}\label{lem:possible_vertices}
Let $P$ be the $\mathcal H$-embedding of a $2$-level $d$-polytope with respect to a simplicial core $\Gamma$. Then $\vertex(P)=P \cap M^{-1} \cdot \{0,1\}^d=P \cap \Z^d\subseteq \Z^d$, where $M = M(\Gamma)$ is the embedding transformation matrix of $\Gamma$.
\end{cor}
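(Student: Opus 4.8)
The plan is to prove the chain of inclusions
\[
\vertex(P) \subseteq P \cap M^{-1}\cdot\{0,1\}^d \subseteq P \cap \Z^d \subseteq \vertex(P)\,;
\]
since $P \cap \Z^d \subseteq \Z^d$ is immediate, this forces all three sets to coincide and gives the statement. Throughout I would use that, by the discussion preceding the corollary, the $\mathcal{V}$-embedding of $P$ equals $M \cdot P$ (here $P$ denotes the $\mathcal{H}$-embedding), that $M$ is unimodular, and the two properties collected in Lemma~\ref{lem:embeddings}.

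For the first inclusion: by Lemma~\ref{lem:embeddings}, the vertices of the $\mathcal{V}$-embedding $M\cdot P$ have $0/1$ coordinates, so $Mv\in\{0,1\}^d$, i.e.\ $v\in M^{-1}\cdot\{0,1\}^d$, for every $v\in\vertex(P)$; and of course $\vertex(P)\subseteq P$. For the second inclusion, since $M$ is unimodular its inverse $M^{-1}$ has integer entries, hence $M^{-1}\cdot\{0,1\}^d\subseteq\Z^d$.

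The only step with real content is the last inclusion $P\cap\Z^d\subseteq\vertex(P)$. Fix $x\in P\cap\Z^d$. I would first argue $Mx\in\{0,1\}^d$. On one hand $M$ has integer entries, so $Mx\in\Z^d$. On the other hand, as in the proof of Lemma~\ref{lem:embeddings}, for each $i\in[d]$ the $i$-th coordinate in the $\mathcal{V}$-embedding and the slack with respect to $F_i$ are affine forms on $\aff(P)=\R^d$ which agree on $v_1,\dots,v_{d+1}$, hence everywhere; thus $(Mx)_i=g_i(x)$, where $g_i(\cdot)\geqslant 0$ is the ($2$-level-scaled) facet inequality defining $F_i$. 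Since $P$ is $2$-level, $g_i$ takes values in $\{0,1\}$ on $\vertex(P)$, hence in $[0,1]$ on $P=\conv(\vertex(P))$ by convexity, so $0\leqslant(Mx)_i\leqslant 1$. Together with $Mx\in\Z^d$ this yields $Mx\in\{0,1\}^d$.

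To conclude that $x\in\vertex(P)$, note that since the vertices of $M\cdot P$ lie in $\{0,1\}^d$ (shown above) we have $M\cdot P=\conv(\vertex(M\cdot P))\subseteq\conv(\{0,1\}^d)=[0,1]^d$, and that a point of $\{0,1\}^d$ is a vertex of the cube $[0,1]^d$, hence an extreme point of any convex subset of the cube containing it. Applying this to $M\cdot P\ni Mx$ gives $Mx\in\vertex(M\cdot P)$; since $y\mapsto My$ is an affine isomorphism of $\R^d$ mapping $\vertex(P)$ bijectively onto $\vertex(M\cdot P)$, it follows that $x\in\vertex(P)$. I do not anticipate a serious obstacle: the two points that need a little care are the promotion of the identity ``$i$-th $\mathcal V$-coordinate $=$ slack with respect to $F_i$'' from the simplicial-core vertices to all of $\R^d$ (already implicit in Lemma~\ref{lem:embeddings}), and the observation that $M\cdot P$ sits inside the cube, which is precisely what forces its integer points to be vertices.
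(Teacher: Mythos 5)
Your proof is correct and rests on the same two facts the paper's proof uses: that $M$ is unimodular and that the $\mathcal{V}$-embedding $M\cdot P$ is a $0/1$-polytope (Lemma~\ref{lem:embeddings}). The paper argues more compactly by writing $\vertex(P)=M^{-1}\cdot\vertex(M\cdot P)$ and invoking that for a $0/1$-polytope $Q\subseteq[0,1]^d$ one has $\vertex(Q)=Q\cap\{0,1\}^d=Q\cap\Z^d$, then transports these identities through the bijection $M^{-1}$; you instead verify the chain of inclusions $\vertex(P)\subseteq P\cap M^{-1}\cdot\{0,1\}^d\subseteq P\cap\Z^d\subseteq\vertex(P)$ directly, and in the last step you re-derive the bound $0\leqslant(Mx)_i\leqslant 1$ via the slack interpretation rather than simply noting $M\cdot P=\conv(\vertex(M\cdot P))\subseteq[0,1]^d$ (which you in fact also state). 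The slack detour is slightly redundant, but the structure, the use of unimodularity, and the identification of integer points of $M\cdot P$ with its vertices are all the same; this is essentially the paper's argument, unpacked.
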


\begin{proof}
The transformation matrix $M$ is unimodular, thus it maps integer vectors to integer vectors. 
For this reason $\vertex(P) = M^{-1} \cdot (\vertex(M \cdot P)) = M^{-1} \cdot (M \cdot P \cap \{0,1\}^d)$  since $M \cdot P$ is a $0/1$-polytope. From this identity, $\vertex(P) = P \cap M^{-1} \cdot \{0,1\}$. In the same fashion, using again the unimodularity of $M$ and the fact that $M \cdot P$ is integral, $\vertex(P) = M^{-1} \cdot (M \cdot P \cap \Z^d) = P \cap M^{-1} \cdot \Z^d = P \cap \Z^d$.
\end{proof}

It follows from Lemma~\ref{lem:embeddings} that any $\mathcal H$-embedding of a $2$-level $d$-polytope is of the form $P(H) \coloneqq \{x \in \R^d\mid 0 \leqslant x(E) \leqslant 1 \text{ for each } E \in \mathcal{E} \}
$, for some hypergraph $H = (V,\mathcal{E})$, with $V=[d]$. Observe that $P(H)$ is 2-level if and only if it is integral. For every hyperedge $E\in \mathcal{E}$, we refer to a pair of inequalities $0 \leqslant x(E) \leqslant 1$ as a pair of \emph{hyperedge constraints}.

Finally, we prove a surprising structural result for $2$-level polytopes: the local information of having a simple vertex has a huge impact on the entire structure of the polytope since it forces the polytope to be isomorphic to the stable set polytope of a perfect graph. We use this later in Section~\ref{sec:exp_results} to recognize stable set polytopes of perfect graphs among $2$-level polytopes. 

\begin{lem}\label{lem:simple_vertex}
Let $P$ be some $2$-level polytope. If $P$ has a simple vertex, then it is isomorphic to the stable set polytope of a perfect graph. 
\end{lem}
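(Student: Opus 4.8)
The plan is to use the $\mathcal{H}$-embedding to reduce to a purely combinatorial statement, then invoke the known characterization of perfect graphs via their stable set polytopes. Pick a simplicial core $\Gamma = (F_1,\dots,F_{d+1};v_1,\dots,v_{d+1})$ for $P$ in which the simple vertex plays the role of $v_{d+1}$; this is possible because every vertex lies in some complete flag of faces, and Lemma~\ref{lem:simplicial_cores} (together with the ``extension'' remark following it) lets us build a simplicial core whose last vertex is any prescribed vertex. In the $\mathcal{H}$-embedding with respect to $\Gamma$, the vertex $v_{d+1}$ sits at the origin. By Lemma~\ref{lem:embeddings}, every facet of $P$ has the form $x(E)\leqslant 1$ or $x(E)\geqslant 0$ for some nonempty $E\subseteq[d]$; a facet of the first type contains the origin iff $|E|\ne\emptyset$ is automatically satisfied (the origin lies on $x(E)\leqslant 1$ only if... wait, $0(E)=0\neq 1$), so in fact the facets through the origin are exactly those of the form $x(E)\geqslant 0$, and since $v_{d+1}$ is simple there are exactly $d$ of them. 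The $d$ facets $x(E_1)\geqslant0,\dots,x(E_d)\geqslant0$ through the origin have linearly independent normals, which forces $\{E_1,\dots,E_d\}=\{\{1\},\dots,\{d\}\}$: otherwise the $0/1$ incidence vectors of the $E_k$ would be linearly dependent. Hence $P\subseteq\{x:x_i\geqslant0,\ i\in[d]\}$, and all remaining facets are of the form $x(E)\leqslant 1$.

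So $P=\{x\in\R^d_{\geqslant0}: x(E)\leqslant 1 \text{ for } E\in\mathcal{E}\}$ for some family $\mathcal{E}$ of nonempty subsets of $[d]$, and $P$ is integral (being $2$-level, by the observation after Corollary~\ref{lem:possible_vertices}). The next step is to show that the facet-defining inequalities $x(E)\leqslant1$ are precisely the \emph{clique inequalities} of a graph $G$ on vertex set $[d]$, namely $G$ has an edge $ij$ iff $\{i,j\}$ is contained in some $E\in\mathcal{E}$ with $x(E)\leqslant1$ valid. The key point is that, because $P$ is a $0/1$-polytope containing $0$ and all unit vectors $e_i$, its vertex set is a downward-closed family of $0/1$-vectors, i.e.\ an independence system; writing $\mathcal{I}$ for the set of supports of vertices, $P=\conv\{\chi^S:S\in\mathcal{I}\}$. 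One then argues that each $S\in\mathcal{I}$ must be a stable set of $G$ and, conversely, every stable set of $G$ is in $\mathcal{I}$: the inequalities $x(E)\leqslant1$ with $|E|\geqslant3$ cannot be facet-defining unless $E$ is a clique (if $E$ is not a clique of $G$, some pair $i,j\in E$ is non-adjacent, meaning $e_i+e_j\in P$, and a short argument shows $x(E)\leqslant1$ is then dominated by the $2$-element inequalities, contradicting that it defines a facet). This identifies $P$ with $\stab(G)$ having only nonnegativity and clique constraints as facets — which, by Chvátal's theorem~\cite{Chvatal75}, holds exactly when $G$ is perfect.

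The main obstacle is the middle step: proving that every facet $x(E)\leqslant1$ forces $E$ to be a clique of $G$, and that no further (non-clique, non-nonnegativity) facets can occur. This is where $2$-levelness is used crucially and not just formally — in general a $0/1$-polytope with nonnegativity facets and a simple vertex at the origin need not be a stable set polytope, but $2$-levelness constrains which sets can appear as vertex supports. Concretely, I expect to need: if $e_i+e_j\in\vertex(P)$ for all pairs $i,j\in E$, then $\chi^E\in P$ (using that the hyperplane $x(E)=1$ must leave all vertices on one side, and a parity/covering argument on the $2$-level structure), contradicting that $x(E)\leqslant1$ is facet-defining when $|E|\geqslant2$ and $E$ is not already handled. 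Once every facet is a clique or nonnegativity inequality, $\vertex(P)=\{\chi^S:S\text{ stable in }G\}$ follows because $P$ is integral, and Chvátal's characterization closes the argument. A clean way to organize this is to first show $\vertex(P)=\{0,1\}^d\cap P$ is exactly $\{\chi^S: S \text{ stable in } G\}$ directly from integrality plus the facet description, sidestepping a separate domination argument; the real content is still that $2$-levelness forbids any facet whose support is not a clique.
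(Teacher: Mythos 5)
Your overall strategy is the paper's: put the simple vertex at the origin of an $\mathcal H$-embedding, show $P$ becomes an integral polytope of the form $\{x\in\R^d_{\geqslant 0}\mid x(E)\leqslant 1,\ E\in\mathcal E\}$, and conclude perfection. But there is a genuine gap in the step that gets you into the nonnegative orthant. You claim that the $d$ facets through the origin, say $x(E_1)\geqslant 0,\dots,x(E_d)\geqslant 0$, must satisfy $\{E_1,\dots,E_d\}=\{\{1\},\dots,\{d\}\}$ because their $0/1$ normals are linearly independent. That implication is false: $\{1\},\{1,2\},\dots,\{1,\dots,d\}$ are linearly independent $0/1$ vectors that are not unit vectors. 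Worse, the conclusion itself can fail for the simplicial core you chose, because you only prescribed $v_{d+1}=v$ and left $v_1,\dots,v_d$ arbitrary. Concretely, take $P$ the square and $\Gamma$ with $v_3=v$ a vertex, $v_2$ a neighbour of $v$, and $v_1$ the vertex \emph{opposite} $v$ (this is a legitimate simplicial core). The resulting $\mathcal H$-embedding is $\conv\{(0,0),(0,1),(1,0),(1,-1)\}$, whose facets through the origin are $x_1\geqslant 0$ and $x_1+x_2\geqslant 0$; the polytope is not contained in $\R^2_{\geqslant 0}$. The missing idea is the one the paper uses: choose $v_1,\dots,v_d$ to be the \emph{neighbours} of $v$. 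Since $v$ is simple, each edge $[v,v_j]$ lies on all but one of the $d$ facets through $v$, which forces the embedding transformation matrix $M(\Gamma)$ to be the identity, and only then do the facets through the origin become exactly $x_i\geqslant 0$, $i\in[d]$. (Equivalently: each $e_j$ lies on exactly $d-1$ of the $d$ facets through the origin, so each index belongs to exactly one $E_k$, forcing the $E_k$ to be singletons.)

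The second half of your argument is only sketched where the paper simply cites \cite[Theorem~3.4]{Cornuejols00}, but your sketch is completable and in fact simpler than you make it sound. Once $P=\{x\in\R^d_{\geqslant 0}\mid x(E)\leqslant 1,\ E\in\mathcal E\}$ is integral, define $G$ on $[d]$ by $ij\in E(G)$ iff $\{i,j\}\subseteq E$ for some $E\in\mathcal E$; then a $0/1$ point lies in $P$ iff its support meets each $E$ at most once iff its support is stable in $G$, so integrality gives $P=\stab(G)$ directly. Each $E\in\mathcal E$ is then automatically a clique (validity of $x(E)\leqslant 1$ for $\stab(G)$ already forces this --- no extra $2$-levelness or domination argument is needed), so $\stab(G)$ coincides with its clique relaxation and Chv\'atal's theorem yields perfection. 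So the "main obstacle" you identify is not where the difficulty lies; the real gap is the orthant step above.
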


\begin{proof}
Let $v$ be some simple vertex of $P$. Assuming that $P$ is $d$-dimensional, $v$ is contained in $d$ edges of $P$. We denote by $[v,v_i]$, $i \in [d]$, the $d$ edges containing $v$.
There exist facets $F_1$, \ldots, $F_{d+1}$ of $P$ such that 
$\Gamma := (F_1,\ldots,F_{d+1}; v_1,\ldots,v_d,v_{d+1}:=v)$ is a simplicial core. The first $d$ facets are determined by the choice of $v$ since they are all the facets of $P$ containing $v$, while $F_{d+1}$ is any facet not containing $v$.
By construction, $M(\Gamma) = I_d$.

Consider the $\mathcal{H}$-embedding of $P$ with respect to $\Gamma$, which has the form $P(H) = \{x \in \R^d\mid 0 \leqslant x(E) \leqslant 1 \text{ for each } E \in \mathcal{E} \}
$, for some hypergraph $H = (V,\mathcal{E})$ with $V=[d]$. Since $M(\Gamma)$ is the $d \times d$ identity matrix, the $\mathcal{H}$- and $\mathcal{V}$-embeddings with respect to $\Gamma$ coincide. For this reason, $P(H) = P_+(H)$, where $P_+(H) := P(H) \cap \R^d_+$. 

Since $P$ is $2$-level, $P(H) = P_+(H)$ is integral. By \cite[Theorem 3.4]{Cornuejols00}, we conclude that $P_+(H)$ is the stable set polytope of some perfect graph. The result follows.
\end{proof}

%%%
%%% THE ENUMERATION ALGORITHM
%%%

\section{The enumeration algorithm}
\label{sec:enum-algorithm}

In this section, we provide a high-level description of the enumeration algorithm. At this stage, we omit some details, which we postpone to Sections \ref{sec:closure} and \ref{sec:reduction}, and convey the main ideas.

Suppose that we are given a list $L_{d-1}$ of $2$-level polytopes of dimension $d-1$, each stored with some simplicial core. More precisely, each polytope in $L_{d-1}$ is represented by its slack matrix, with rows and columns ordered in such a way that the top left $d \times d$ submatrix corresponds to the chosen simplicial core. 

Now pick $P_0 \in L_{d-1}$, and let $\Gamma_0$ denote its stored simplicial core. Let $M_{d-1} \coloneqq M(\Gamma_0)$ be the corresponding embedding transformation matrix. Thus $M_{d-1}$ is the top left $(d-1) \times (d-1)$ submatrix of the slack matrix of $P_0$.

%\[
%\begin{pmatrix}
%\fbox{\parbox[c][1cm][c]{1cm}{\centering$M_{d-1}$}} & \begin{matrix} 0 \\ \vdots \\ 0 \end{matrix} \\
%\begin{matrix} \ast & \cdots & \ast \end{matrix} & 1
%\end{pmatrix}\,.
%\]
%
%\[
%\begin{tikzpicture}[line join=round]
%\node[matrix of math nodes,inner sep=0pt, column sep=6pt, row sep=2pt,left delimiter=(,right delimiter=)] (m) {
%\parbox[c][1.15cm][c]{1.15cm}{\centering$M_{d-1}$}  & \begin{matrix} 0 \\ \vdots \\ 0 \end{matrix} \\
%\begin{matrix} \ast & \cdots & \ast \end{matrix} & 1 \\
%};
%\draw (m-1-1.south west) rectangle (m-1-1.north east);
%\end{tikzpicture}
%\]

We would like to enumerate all $2$-level $d$-polytopes $P$ which have a facet isomorphic to $P_0$, together with a simplicial core $\Gamma$, up to isomorphism. From Lemma \ref{lem:faces} we know that, by varying $P_0 \in L_{d-1}$, we are going to enumerate all combinatorial types of $2$-level $d$-polytopes.

Consider a $2$-level $d$-polytope $P$ having a facet isomorphic to $P_0$. For the sake of simplicity, we assume that $P_0$ is actually a facet of $P$. Then we can find a simplicial core $\Gamma$ of $P$ that extends the chosen simplicial core $\Gamma_0$ of $P_0$, see the discussion after Lemma~\ref{lem:simplicial_cores}. In terms of slack matrices, this implies that the corresponding embedding transformation matrix $M_{d}:=M(\Gamma)$ takes the form
\begin{equation}\label{eq:M_d}
M_d = M_d(c) =
\begin{pmatrix}
1 & \begin{matrix} 0 & \cdots & 0 \end{matrix}\\
\begin{matrix} c_1 \\ \vdots\\ c_{d-1} \end{matrix} & \fbox{\parbox[c][1cm][c]{1cm}{\centering$M_{d-1}$}} \\
\end{pmatrix}\,.
\end{equation}

A priori, we do not know the vector $c \in \{0,1\}^{d-1}$. Suppose for now that we fix $c \in \{0,1\}^{d-1}$, so that $M_d$ is completely defined. From Corollary~\ref{lem:possible_vertices}, we know that in the $\mathcal{H}$-embedding of $P$ corresponding to $\Gamma$, we have $\vertex(P) \subseteq M_d^{-1} \cdot \{0,1\}^d$. By construction, $P_0$ is the first facet of $\Gamma$. Hence, $P_0$ is the facet of $P$ defined by $x_1 \geqslant 0$. Since $P$ is $2$-level, we can decompose its vertex set as $\vertex(P) = \vertex(P_0) \cup \vertex(P_1)$ where $P_1$ is the face of $P$ opposite to $P_0$, defined by $x_1 \leqslant 1$. Notice that $\vertex(P_1) \subseteq M_d^{-1} \cdot (\{1\} \times \{0,1\}^{d-1})$, and moreover $e_1 \in \vertex(P_1)$, since we are considering an $\mathcal{H}$-embedding of $P$. 

Now consider the set
\begin{equation}\label{eq:first_X}
\firstX = \firstX(P_0,\Gamma_0) \coloneqq \bigcup_{c \in \{0,1\}^{d-1}} M_d(c)^{-1} \cdot (\{1\} \times \{0,1\}^{d-1})\,.
\end{equation}
From the discussion above, we know that every $2$-level polytope $P$ that has a facet isomorphic to $P_0$ satisfies $P = \conv (\vertex(P_0) \cup A)$ for some $A \subseteq \firstX$ with $e_1 \in A$, in some $\mathcal{H}$-embedding.

\begin{ex}\label{ex:first_X}
Let $d = 3$, let $P_0$ be the $2$-simplex, and let $\Gamma_0$ be any of its simplicial cores (there is just one, up to symmetry). Using \eqref{eq:first_X}, it is easy to compute that
\begin{align*}
\firstX &= \bigcup_{c \in \{0,1\}^2} {\scriptsize\begin{pmatrix}
1 & 0 & 0 \\
c_1 & 1 & 0 \\
c_2 & 0 & 1
\end{pmatrix}^{-1}}\cdot(\{1\} \times \{0,1\}^2) \\
&=
\left\{{\scriptsize
\begin{pmatrix}1\\ -1\\ -1\end{pmatrix},
\begin{pmatrix}1\\ -1\\ 0\end{pmatrix},
\begin{pmatrix}1\\ -1\\ 1\end{pmatrix},
\begin{pmatrix}1\\ 0\\ -1\end{pmatrix},
\begin{pmatrix}1\\ 0\\ 0\end{pmatrix},
\begin{pmatrix}1\\ 0\\ 1\end{pmatrix},
\begin{pmatrix}1\\ 1\\ -1\end{pmatrix},
\begin{pmatrix}1\\ 1\\ 0\end{pmatrix},
\begin{pmatrix}1\\ 1\\ 1\end{pmatrix}
}\right\}.
\end{align*}
Notice that in this case $\firstX$ can be more compactly expressed as $\{1\} \times \{-1,0,1\}^2$. This leads to an alternative way to describe $\firstX$ in general, which is discussed in detail in Section \ref{sec:tiling}. In Figure \ref{fig:first_X}, we represent the $\mathcal H$-embedding of $P_0$ in $\{0\} \times \R^2$ with respect to $\Gamma_0$ and the set $\firstX$. 

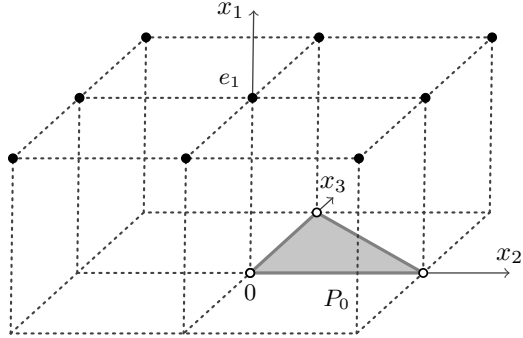
\begin{figure}[ht!]\centering
\begin{tikzpicture}[line join=round,line cap=round,rotate around y=90,rotate around z=-2,scale=2.3]
\tikzset{vertex/.style={draw,fill,circle,inner sep=1.1pt,minimum width=1pt},
frame/.style={line width=.8pt,draw=darkgray,dotted},
axes/.style={line width=.5pt,draw=darkgray,->}}
\draw[axes](0,1,0)--(0,1.5,0) node[left] at (0,1.5,0){$x_1$};
\draw[axes](1,0,0)--(1.25,0,0) node[above,yshift=-.5mm] at (1.25,0,0){$x_3$};
\draw[axes](0,0,1)--(0,0,1.5) node[above] at (0,0,1.5){$x_2$};

%\draw[dotted,line width=1.7pt](1.4,0,-.4)--(-.4,0,1.4)node [below] at (-.4,0,1.4){$\mathrm{aff}(F_0)$};

\draw[line width=1.2pt,gray,fill=darkgray!30](0,0,0)--(1,0,0)--(0,0,1)-- cycle;
\node [vertex,fill=white] at (0,0,0) {};
\node[label={below:$P_0$}] at (0,0,.5) {};

%\draw[line width=1.7pt](1,0,0)--(0,0,1) node [midway,right,yshift=1mm] {$F_0$};

\draw[frame](0,0,1)--(0,1,1);
\draw[frame](0,0,0)--(0,1,0);
\draw[frame](1,0,0)--(1,1,0);
\draw[frame](1,0,1)--(1,1,1);
\draw[frame](1,0,-1)--(1,1,-1);
\draw[frame](0,0,-1)--(0,1,-1);
\draw[frame](-1,0,0)--(-1,1,0);
\draw[frame](-1,0,1)--(-1,1,1);
\draw[frame](-1,0,-1)--(-1,1,-1);

\draw[frame](0,0,-1)--(0,0,0)--(-1,0,0);

\draw[frame](0,1,-1)--(0,1,1);
\draw[frame](-1,1,0)--(1,1,0);

\draw[frame](-1,1,-1)--(1,1,-1)--(1,1,1)--(-1,1,1)--cycle;
\draw[frame](-1,0,-1)--(1,0,-1)--(1,0,1)--(-1,0,1)--cycle;

\node [vertex,fill=white] at (0,0,1) {};
\node [vertex,fill=white,label={below:$0$}] at (0,0,0) {};
\node [vertex,fill=white] at (1,0,0) {};

\node[vertex] at(1,1,1) {};
\node[vertex] at(0,1,1) {};
\node[vertex] at(-1,1,1) {};

\node[vertex] at(1,1,0) {};
\node[vertex,label={above left:$e_1$}] at(0,1,0) {};

\node[vertex] at(-1,1,0) {};
\node[vertex] at(1,1,-1) {};
\node[vertex] at(0,1,-1) {};
\node[vertex] at(-1,1,-1) {};
\end{tikzpicture}

\caption{$\mathcal H$-embedding of $P_0$ in $\{0\} \times \R^2$ with respect to its simplicial core $\Gamma_0$. The black points in $\{1\} \times \R^2$ form the corresponding set $\firstX$.}
\label{fig:first_X}
\end{figure}
\end{ex}

The sets $A \subseteq \firstX$ such that $\conv(\vertex(P_0) \cup A)$ is $2$-level satisfy stringent properties, such as those arising from Lemma~\ref{lem:embeddings}. This greatly reduces the possible choices for $A$. In other words, the enumeration algorithm does not have to consider every possible subset $A$ of $\firstX$, but can restrict to a much smaller family of subsets of $\firstX$. The next definition formalizes this.

\begin{defn}\label{def:complete}
Let $P_0$, $\Gamma_0$ and $\firstX$ be as above. A family $\mathcal{A}$ of subsets of $\firstX$ is called \emph{complete} with respect to $P_0$, $\Gamma_0$ if $e_1 \in A$ for every $A \in \mathcal A$ and every $2$-level $d$-polytope having a facet isomorphic to $P_0$ has an $\mathcal{H}$-embedding of the form $\conv(\vertex{(P_0)} \cup A)$ for some $A \in \mathcal{A}$.\end{defn}

The algorithm enumerates all \emph{candidate sets} $A \in \mathcal{A}$ for some complete family $\mathcal{A}$ and checks, for each of them, if it yields a $2$-level polytope $P \coloneqq \conv(\vertex(P_0) \cup A)$ that we did not previously find. In case the latter holds, the algorithm adds $P$ to the list $L_d$ of $d$-dimensional $2$-level polytopes.

Clearly, the strength of the algorithm relies on how accurate is our choice $\mathcal{A}$, and how efficiently we can enumerate the sets $A \in \mathcal{A}$. In Section~\ref{sec:closure}, we are going to define $\mathcal{A}$ as the collection of all closed sets for some closure operator over the ground set $\firstX$. Then, in Section~\ref{sec:reduction}, we will reduce the ground set and prove that the same closure operator applied on the smaller ground set yields a complete family.

The pseudocode of the enumeration algorithm is presented below, see Algorithm~\ref{algo:enumeration}. %It should be clear from the discussion above that the algorithm is correct
The correctness of the algorithm is a direct consequence of the discussion above, which relies on the fact that, for each $P_0 \in L_{d-1}$ with simplicial core $\Gamma_0$, $\mathcal{A} = \mathcal{A}(P_0,\Gamma_0)$ is a complete family.

\begin{algorithm}[ht!]
    \SetKwInOut{Input}{\sc Input}
    \SetKwInOut{Output}{\sc Output}
    \Input{a complete list $L_{d-1}$ of $(d-1)$-dimensional $2$-level polytopes, each
stored with a simplicial core}
    \Output{a complete list $L_d$ of $d$-dimensional
$2$-level polytopes, each stored with a simplicial core}
  \SetNlSty{textmd}{}{}
  Set $L_d \coloneqq \varnothing$\;
  \ForEach{$P_0 \in L_{d-1}$ with simplicial core $\Gamma_0 \coloneqq (F'_2,\ldots,F'_{d+1}; v_2,\ldots, v_{d+1})$}{
    Construct the $\mathcal{H}$-embedding of $P_0$ in $\{0\} \times \R^{d-1} \simeq \R^{d-1}$ 
    with respect to $\Gamma_0$\;
    Let $M_{d-1} \coloneqq M(\Gamma_0)$\;
	Construct $\firstX$ as in \eqref{eq:first_X}\;
		Let $\mathcal{A} = \mathcal{A}(P_0,\Gamma_0) \subseteq 2^{\firstX}$ be a complete family with respect to $P_0$, $\Gamma_0$ \label{step:complete}\label{step}\;
			\ForEach{\label{step2}$A \in \mathcal{A}$}{
				Let $P \coloneqq \conv (\vertex{(P_0)} \cup A)$\;\label{alg:step-ch}
				\If{$P$ is not isomorphic to any polytope in $L_d$ and is $2$-level \label{alg:step-iso}}{
				  Let $F_1 \coloneqq P_0$ and $v_1 \coloneqq e_1$\;
				  \For{$i = 2, \ldots, d+1$}{
				  	Let $F_i$ be the facet of $P$ distinct from $F_1$ s.t.\ $F_i \supseteq F'_i$\;
				  }
					Add polytope $P$ to $L_d$ with simplicial core $\Gamma \coloneqq (F_1,\ldots,F_{d+1};v_1,\ldots,v_{d+1})$\;
				}
			}

}

\caption{Enumeration algorithm} \label{algo:enumeration}
\end{algorithm}

\section{Closure operators}
\label{sec:closure}

In this section, we describe the closure operator leading to the family $\mathcal{A}$ that is used by our enumeration algorithm. First, we provide two operators, each implementing a condition that candidate sets $A \in \mathcal{A}$ have to satisfy in order to produce $2$-level polytopes. Then, the final operator is obtained by composing these two operators.

Before beginning the description of our operators, we recall the definition of closure operator. Let $\mathcal{X}$ be an arbitrary ground set. A \emph{closure operator} over $\mathcal{X}$ is a function $\cl : 2^{\mathcal{X}} \to 2^{\mathcal{X}}$ on the power set of $\mathcal{X}$ that is
\begin{enumerate}[label=(\roman*)]
\item \emph{idempotent}: $\cl(\cl(A)) = \cl(A)$ for every $A\subseteq \mathcal{X}$,
\item \emph{extensive}: $A \subseteq \cl(A)$ for every $A\subseteq \mathcal{X}$, 
\item \emph{monotone}: $A \subseteq B \Rightarrow \cl(A) \subseteq \cl(B)$ for every $A, B \subseteq \mathcal{X}$.
\end{enumerate}
A set $A \subseteq \mathcal{X}$ is said to be \emph{closed} with respect to $\cl$ provided that $\cl(A) = A$.

The enumeration of all closed sets of a given closure operator on some finite ground set is a well-studied problem arising in many areas, and in particular in formal concept analysis~\cite{GanterWille}. As part of our code, we implement Ganter's Next-Closure algorithm~\cite{Ganter87, Ganter91}, one of the best known algorithms for the enumeration of closed sets.

\subsection{Discrete convex hull}
\label{sec:dchcl}

In order to motivate our first operator, consider the sets $\mathcal{X}$ and $\mathcal{Y}$ where $\mathcal{X} \coloneqq \R^d$ and $\mathcal{Y}$ is the set of all (closed) halfspaces of $\R^d$. For a set $A \subseteq \mathcal{X}$, we can define $\mathcal{H}(A)$ as the set of all halfspaces that contain all the points of $A$. Similarly, for a set $B \subseteq \mathcal{Y}$, we can define $\mathcal{P}(B)$ as the set of all points that are contained in all the halfspaces of $B$. Now consider the operator $\cl : 2^\mathcal{X} \to 2^\mathcal{X}$ with 
\begin{equation}\label{eq:cl_intro}
\cl(A) \coloneqq \mathcal{P}(\mathcal{H}(A)). 
\end{equation}

We recall the following well known result in formal concept analysis and provide the proof for completeness.

\begin{lem}\label{lem:cl_intro}
Consider two sets $\mathcal X$ and $\mathcal Y$, and a relation $R \subseteq \mathcal X \times \mathcal Y$. Let the operator $\cl : 2^\mathcal{X} \to 2^\mathcal{X}$ be defined as 
\[
\cl(A) \coloneqq \{x \in \mathcal X \mid x R y \text{ for every } y \in A'\},
\]
where $A' \coloneqq \{y \in \mathcal Y \mid aR y \text{ for every } a \in A\}$. Then $\cl$ is a closure operator over $2^\mathcal{X}$.
\end{lem}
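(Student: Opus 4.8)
The statement to prove is the standard fact that the Galois-connection composite $\cl(A) = A''$ is a closure operator. The plan is to verify the three axioms (extensivity, monotonicity, idempotency) directly from the definition of the two polarity maps, using only elementary set-theoretic reasoning about the relation $R$.

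\medskip

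\textbf{Setup.} For $A \subseteq \mathcal X$ write $A' \coloneqq \{y \in \mathcal Y \mid aRy \text{ for every } a \in A\}$, and symmetrically for $B \subseteq \mathcal Y$ write $B' \coloneqq \{x \in \mathcal X \mid xRy \text{ for every } y \in B\}$, so that $\cl(A) = (A')' = A''$. The first thing I would record is the contravariance of the prime operator: if $A_1 \subseteq A_2 \subseteq \mathcal X$, then every $y$ satisfying $aRy$ for all $a \in A_2$ a fortiori satisfies it for all $a \in A_1$, hence $A_2' \subseteq A_1'$; the same argument applies on the $\mathcal Y$ side. The second observation is the ``unit'' inequality $A \subseteq A''$: if $a \in A$, then by definition of $A'$ we have $aRy$ for every $y \in A'$, which is exactly the condition for $a \in (A')' = A''$; again the symmetric statement $B \subseteq B''$ holds for $B \subseteq \mathcal Y$.

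\medskip

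\textbf{Proof of the three axioms.} Extensivity ($A \subseteq \cl(A)$) is precisely the unit inequality $A \subseteq A''$ just noted. Monotonicity follows by applying the contravariance of the prime operator twice: $A \subseteq B$ gives $B' \subseteq A'$, and applying prime again reverses the inclusion once more to yield $A'' \subseteq B''$, i.e.\ $\cl(A) \subseteq \cl(B)$. For idempotency I must show $A'''' = A''$. On one hand, applying the unit inequality $B \subseteq B''$ with $B = A'$ gives $A' \subseteq A'''$, and then priming (which is contravariant) yields $A'''' \subseteq A''$. On the other hand, applying the unit inequality $A \subseteq A''$ and priming gives $A''' \subseteq A'$, and priming once more gives $A'' \subseteq A''''$. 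Combining the two inclusions gives $A'''' = A''$, hence $\cl(\cl(A)) = \cl(A)$.

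\medskip

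\textbf{Main obstacle.} There is essentially no obstacle here: the entire proof is a formal manipulation of a Galois connection, and the only thing one must be careful about is tracking the direction of inclusions when composing an even versus an odd number of contravariant prime operations. The cleanest exposition is to isolate the two lemmas (contravariance of $(\cdot)'$ and the unit inequality $S \subseteq S''$ on both sides) first, and then derive all three closure axioms as one-line consequences, which is the route I would take.
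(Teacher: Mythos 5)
Your proof is correct and follows essentially the same route as the paper: both arguments are the standard Galois-connection verification, using contravariance of the polarity maps and the unit inequality, with idempotency reduced to the identity $A''' = A'$ (the paper states this as $(\cl(A))' = A'$, you derive it as two inclusions). Your write-up is slightly more systematic in isolating the two auxiliary facts up front, but the mathematical content is identical.
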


\begin{proof}
Take two sets $A \subseteq C \subseteq \mathcal X$.
The map $\cl$ is extensive since every $a \in A$ verifies the property that $a R b$ for every $b \in A'$, thus $A \subseteq \cl(A)$.
The monotonicity holds as, by construction, $A' \supseteq C'$, thus $\cl(A) \subseteq \cl(C)$.

Finally, $\cl$ is idempotent: notice that for $y \in \mathcal Y$, $a R y$ for every $a \in A$ iff 
$a R y$ for every $a \in \cl(A)$, therefore $(\cl(A))' = A'$. The claim follows.
\end{proof}

By Lemma \ref{lem:cl_intro}, the map in \eqref{eq:cl_intro} is a closure operator. It is obtained by composing the convex hull operator and the topological closure operator. In other words, we have $\cl(A) = \overline{\conv}(A)$ for all $A \subseteq \mathcal{X}$. 

Our first closure operator is inspired by this construction. 

Consider a $2$-level $(d-1)$-polytope $P_0$, and let $\Gamma_0$ be a simplicial core of $P_0$. As before, consider the corresponding $\mathcal{H}$-embedding of $P_0$ in $\{0\} \times \R^{d-1}$. 

In our context, the role of $\mathcal{X}$ is played by $\vertex(P_0) \cup \firstX$, where $\firstX$ is defined as before, see~\eqref{eq:first_X}. The role of $\mathcal{Y}$ is played by the collection of all \emph{slabs} $S = S(E) \coloneqq \{x \in \R^d \mid 0 \leqslant x(E) \leqslant 1\}$, where  $E \subseteq [d]$ is nonempty.
For $A \subseteq \vertex(P_0) \cup \firstX$, we define $\mathcal{E}(A)$ as the set of all hyperedges $E \subseteq [d]$ whose corresponding slab $S(E)$ contains all the points of $A$. That is, we let 
\begin{equation}\label{eq:cl_slabs}
\mathcal{E}(A) \coloneqq \{ E \subseteq [d] \mid E \neq \varnothing,\ 0 \leqslant x(E)\leqslant 1 \text{ for every }x \in A\}\,.
\end{equation}
Now, for $A \subseteq \firstX$ we let
\begin{equation}\label{eq:dchcl}
\dchcl(A) \coloneqq \{x \in \firstX \mid 0 \leqslant x(E) \leqslant 1 \text{ for every } E\in\mathcal{E}(\vertex(P_0) \cup A)\}\,.
\end{equation}
In other words, $\dchcl$ maps $A$ to the subset of $\firstX$ satisfying all pairs of hyperedge constraints that are satisfied by $\vertex(P_0) \cup A$. From the discussion above, the operator $\dchcl$ is a closure operator over the ground set $\firstX$. Notice that we always have $e_1 \in \dchcl(A)$ since it belongs to $\firstX$ and satisfies \emph{all} pairs of hyperedge constraints. 

Let $A \subseteq \firstX$. Using Lemma~\ref{lem:embeddings}, we see that $\conv(\vertex(P_0) \cup A)$ is $2$-level only if $A$ is closed for the operator $\dchcl$.

\subsection{Incompatibilities}

Here, we implement a second restriction on the choice of candidate sets $A \subseteq \firstX$, that uses further constraints coming from the $2$-levelness of $P \coloneqq \conv(\vertex(P_0) \cup A)$.

Every facet $F_0$ of $P_0$ can be uniquely extended to a facet $F$ of $P$ distinct from $P_0$. Since $P = \conv(\vertex(P_0) \cup A)$ is assumed to be $2$-level, we see that the vertices of $P$ are covered by at most two translates of $\aff(F)$, the affine hull of $F$.

In order to model this fact, we declare three points $u, v, w \in \vertex(P_0) \cup \firstX$ to be \emph{incompatible} whenever there exists a facet $F_0$ of $P_0$ such that $\aff(F_0)$ and its three translates containing $u,v$ and $w$ respectively cannot be covered by any two parallel hyperplanes other than $\{0\}\times \R^{d-1}$ and $\{1\}\times \R^{d-1}$ (see Figure~\ref{fig:inccl_first_X} for an illustration).

We use incompatibilities between triples of points in $\vertex(P_0) \cup \firstX$ to define the closure operator $\inccl$ on the power set of $\firstX$. For every $A \subseteq \firstX$, we let
\begin{equation}\label{eq:inccl}
\inccl(A) \coloneqq 
\begin{cases}
A & \text{if } \vertex(P_0) \cup A \text{ does not contain an incompatible triple,}\\
\firstX &\text{otherwise.}
\end{cases}
\end{equation}
The reader can easily check that $\inccl$ is a closure operator.

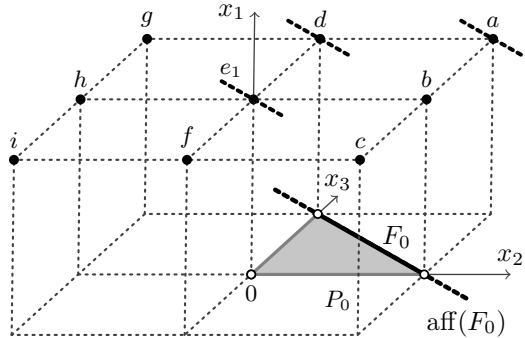
\begin{figure}[ht!]\centering
\begin{tikzpicture}[line join=round, line cap=round,rotate around y=90, rotate around z = -2,scale=2.3]
\tikzset{vertex/.style={draw,fill,circle,inner sep=1.1pt,minimum width=1pt},
frame/.style={line width=.8pt,draw=darkgray,dotted},
axes/.style={line width=.5pt,draw=darkgray,->}}
\draw[axes](0,1,0)--(0,1.5,0) node[left] at (0,1.5,0){$x_1$};
\draw[axes](1,0,0)--(1.3,0,0) node[above,yshift=-.5mm] at (1.3,0,0){$x_3$};
\draw[axes](0,0,1)--(0,0,1.5) node[above] at (0,0,1.5){$x_2$};

\draw[dotted,line width=1.7pt](1.4,0,-.4)--(-.4,0,1.4)node [below] at (-.4,0,1.4){$\mathrm{aff}(F_0)$};

\draw[line width=1.2pt,gray,fill=darkgray!30](0,0,0)--(1,0,0)--(0,0,1)-- cycle;
\node [vertex,fill=white] at (0,0,0) {};
\node[label={below:$P_0$}] at (0,0,.5) {};

\draw[line width=1.7pt](1,0,0)--(0,0,1) node [midway,right,yshift=1mm] {$F_0$};

\draw[frame](0,0,1)--(0,1,1);
\draw[frame](0,0,0)--(0,1,0);
\draw[frame](1,0,0)--(1,1,0);
\draw[frame](1,0,1)--(1,1,1);
\draw[frame](1,0,-1)--(1,1,-1);
\draw[frame](0,0,-1)--(0,1,-1);
\draw[frame](-1,0,0)--(-1,1,0);
\draw[frame](-1,0,1)--(-1,1,1);
\draw[frame](-1,0,-1)--(-1,1,-1);

\draw[frame](0,0,-1)--(0,0,0)--(-1,0,0);

\draw[frame](0,1,-1)--(0,1,1);
\draw[frame](-1,1,0)--(1,1,0);

\draw[frame](-1,1,-1)--(1,1,-1)--(1,1,1)--(-1,1,1)--cycle;
\draw[frame](-1,0,-1)--(1,0,-1)--(1,0,1)--(-1,0,1)--cycle;

\draw[dotted,line width=1.5pt](1.3,1,-.3)--(.7,1,.3);
\draw[dotted,line width=1.5pt](.3,1,-.3)--(.-.3,1,.3);
\draw[dotted,line width=1.5pt](1.3,1,.7)--(.7,1,1.3);

\node [vertex,fill=white] at (0,0,1) {};
\node [vertex,fill=white,label={below:$0$}] at (0,0,0) {};
\node [vertex,fill=white] at (1,0,0) {};

\node[vertex,label={above:$a$}] at(1,1,1) {};
\node[vertex,label={above:$b$}] at(0,1,1) {};
\node[vertex,label={above:$c$}] at(-1,1,1) {};

\node[vertex,label={above:$d$}] at(1,1,0) {};
\node[vertex,label={[yshift=1.5mm]above left:$e_1$}] at(0,1,0) {};

\node[vertex,label={above:$f$}] at(-1,1,0) {};
\node[vertex,label={above:$g$}] at(1,1,-1) {};
\node[vertex,label={above:$h$}] at(0,1,-1) {};
\node[vertex,label={above:$i$}] at(-1,1,-1) {};

\end{tikzpicture}

\caption{An example of incompatible triple of points. As in Example \ref{ex:first_X}, $P_0$ is the $2$-simplex. The facet $F_0$ of $P_0$ depicted in the figure certifies that the displayed triple $\{a,d,e_1\}$ is incompatible. Notice that $\{0,a,e_1\}$ is also incompatible.}\label{fig:inccl_first_X}
\end{figure}

\subsection{Final closure operator}
\label{sec:final_closure}

Consider the operator $\cl : 2^{\firstX} \to 2^{\firstX}$ defined as
\begin{equation}\label{eq:final_cl}
\cl(A) \coloneqq (\inccl \circ \dchcl)(A)\,.
\end{equation}
This is our final operator, which is key for the construction of the complete family used in the enumeration algorithm, as we explain in Section~\ref{sec:complete_family}.

\begin{lem}
The operator $\cl$ defined in \eqref{eq:final_cl} is a closure operator.
\end{lem}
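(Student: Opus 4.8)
The plan is to show that $\cl = \inccl \circ \dchcl$ is idempotent, extensive, and monotone, using that $\dchcl$ and $\inccl$ are both closure operators (already established) and exploiting the very special form of $\inccl$: it either fixes a set or sends it to the whole ground set $\firstX$. In general a composition of two closure operators need not be a closure operator, so the argument must use something about how these two interact; here the key structural fact is that $\inccl$ has exactly two possible ``phases'' and that $\firstX$ is closed under $\dchcl$.

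\emph{Extensivity} is immediate: for any $A \subseteq \firstX$ we have $A \subseteq \dchcl(A) \subseteq \inccl(\dchcl(A)) = \cl(A)$, using extensivity of each operator in turn.

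\emph{Monotonicity} is also straightforward: if $A \subseteq B$, then $\dchcl(A) \subseteq \dchcl(B)$ by monotonicity of $\dchcl$, and then $\inccl(\dchcl(A)) \subseteq \inccl(\dchcl(B))$ by monotonicity of $\inccl$, so $\cl(A) \subseteq \cl(B)$.

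\emph{Idempotence} is the only step requiring care, and I expect it to be the main (though still short) obstacle. I would split into two cases according to the phase of $\inccl$ on $\dchcl(A)$. If $\vertex(P_0) \cup \dchcl(A)$ contains an incompatible triple, then $\cl(A) = \inccl(\dchcl(A)) = \firstX$; then $\dchcl(\firstX) = \firstX$ because $\firstX$ is closed under $\dchcl$ (indeed $\dchcl(\firstX) \subseteq \firstX$ by definition of $\dchcl$, and $\firstX \subseteq \dchcl(\firstX)$ by extensivity, so equality holds), and hence $\cl(\cl(A)) = \inccl(\dchcl(\firstX)) = \inccl(\firstX) = \firstX = \cl(A)$, since $\inccl(\firstX) = \firstX$ either way (if the triple condition fails it is fixed, and if it holds it is mapped to $\firstX$ itself). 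In the other case, $\vertex(P_0) \cup \dchcl(A)$ contains no incompatible triple, so $\cl(A) = \dchcl(A)$; then $\cl(\cl(A)) = \inccl(\dchcl(\dchcl(A))) = \inccl(\dchcl(A))$ by idempotence of $\dchcl$, and this equals $\dchcl(A) = \cl(A)$ because we are still in the ``no incompatible triple'' phase. This completes the verification; the one point to be careful about is the argument that $\firstX$ is $\dchcl$-closed and that $\inccl(\firstX) = \firstX$, both of which are direct from the definitions.
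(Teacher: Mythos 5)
Your proof is correct and follows essentially the same route as the paper: extensivity and monotonicity are inherited directly from $\dchcl$ and $\inccl$, and idempotence is proved by a case split on whether $\cl(A) = \firstX$ or $\cl(A) = \dchcl(A)$. You spell out the $\cl(A) = \firstX$ case in a bit more detail (verifying $\dchcl(\firstX)=\firstX$ and $\inccl(\firstX)=\firstX$ explicitly, where the paper just notes $\cl(\cl(A))=\cl(A)$, which follows directly from extensivity), but there is no substantive difference.
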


\begin{proof}
The extensivity and monotonicity of $\cl$ directly follow from the analogous properties of $\inccl$ and $\dchcl$. Moreover, the operator $\cl$ is idempotent. Indeed, for every $A \subseteq \firstX$, if $\cl(A) = \firstX$ then $\cl(\cl(A))= \cl(A)$. Otherwise $\cl(A) = \dchcl(A)$, and $\cl(\cl(A)) = \inccl(\dchcl(\dchcl(A))) = \inccl(\dchcl(A)) = \cl(A)$.
\end{proof}

\section{Reductions of the ground set and complete family}
\label{sec:reduction}

As we described in Section \ref{sec:enum-algorithm}, the task of enumerating all $2$-level $d$-polytopes is subdivided in the subtasks of enumerating all $2$-level $d$-polytopes $P$ with a prescribed base $P_0$, for every $2$-level $(d-1)$-polytope $P_0$.
It turns out that prescribing $P_0$ as a facet yields more constraints on the structure of the entire polytope $P$. %In this section, we analyze two of them.%this constraints. 

First, in Section \ref{sec:incompatibilities_reduction}, we look at facets of $P_0$, whose expression is well known (see Lemma \ref{lem:embeddings}), and we point out that their possible extensions restricts the choice of points of the ground set $\firstX$. 

Later, in Section \ref{sec:tiling}, we introduce a subdivision of $\firstX$ in tiles (see \eqref{eq:tile}). Then we prove that there exists a collection of translations in the hyperplane $\{1\} \times \R^{d-1}$ that move candidate sets across tiles (Lemma \ref{lem:tiling}) and preserve the property of being closed for $\dchcl$ (Lemma \ref{lem:dch_inv_tra}) and also for $\inccl$ (Lemma \ref{lem:inc_inv_tra}), thus for their composition $\cl$.

Both these arguments lead to  the construction of a reduced ground set $\finalX \subseteq \firstX$, defined in \eqref{eq:final_X}, which serves the purpose of the enumeration algorithm: as we prove in Section \ref{sec:complete_family}, the collection of all closed sets for the operator $\cl$ that are contained in $\mathcal X$ is a complete family of subsets with respect to $P_0$ and its embedding transformation matrix. This result crucially improves the efficiency of the enumeration algorithm, see Section \ref{sec:exp_impl}.  

\subsection{Removing points that always cause incompatibilities}
\label{sec:incompatibilities_reduction}

Consider any facet $F_0$ of the base $P_0$, assuming as before that $P_0$ is $\mathcal H$-embedded in $\{0\} \times \R^{d-1}$ with respect to its simplicial core $\Gamma_0$. By Lemma \ref{lem:embeddings}, there exists some nonempty $E \subseteq \{2,\dots,d\}$ such that $F_0$ is defined by either $x(E) \geqslant 0$ or $x(E) \leqslant 1$ (recall that $x(E) := \sum_{i \in E} x_i$ for every $x \in \R^d$ and every $E \subseteq [d]$). 

Now, if a set $A \subseteq \firstX$ with $e_1 \in A$ contains a point $u$ such that $u(E) \notin \{-1,0,1\}$ then one can always find points $v$ and $w$ in $\vertex(P_0) \cup A$ such that $\{u,v,w\}$ is incompatible. As a matter of fact, one can always take $v = e_1$ and $w$ as any vertex of $P_0$ not on $F_0$. Indeed, the four affine spaces $\{x \in \R^d \mid x(E) = 0,\ x_1 = 0\}$, $\{x \in \R^d \mid x(E) = 1,\ x_1 = 0\}$, $\{x \in \R^d \mid x(E) = 0,\ x_1 = 1\}$ and $\{x \in \R^d \mid x(E) = u(E),\ x_1 = 1\}$ cannot be covered by two parallel hyperplanes other than $\{0\} \times \R^{d-1}$ and $\{1\} \times \R^{d-1}$. As a consequence, such points $u$ can be removed from the ground set of $\inccl$ without changing the closed sets of $\inccl$ (except for the ``full'' closed set $A = \firstX$, which does not yield a $2$-level polytope).

Let $\mathcal{F} = \mathcal{F}(P_0,\Gamma_0)$ denote the collection of nonempty subsets $E \subseteq \{2,\ldots,d\}$ such that $x(E) \geqslant 0$ or $x(E) \leqslant 1$ defines a facet of $P_0$. We define
\begin{equation}\label{eq:second_X}
\secondX = \secondX(P_0,\Gamma_0) \coloneqq \left\{
u \in \firstX \mid u(E) \in \{-1,0,1\} \text{ for every } E \in \mathcal{F}
\right\}\,.
\end{equation}

\begin{ex}\label{ex:second_X}
Let $d = 3$, let $P_0$ be the $2$-simplex, see Example \ref{ex:first_X}. Using \eqref{eq:second_X}, we deduce that
\begin{align*}
\secondX =
\left\{{\scriptsize
\begin{pmatrix}1\\ -1\\ 0\end{pmatrix},
\begin{pmatrix}1\\ -1\\ 1\end{pmatrix},
\begin{pmatrix}1\\ 0\\ -1\end{pmatrix},
\begin{pmatrix}1\\ 0\\ 0\end{pmatrix},
\begin{pmatrix}1\\ 0\\ 1\end{pmatrix},
\begin{pmatrix}1\\ 1\\ -1\end{pmatrix},
\begin{pmatrix}1\\ 1\\ 0\end{pmatrix}
}\right\}.
\end{align*}
Indeed, the facet $x_2 +x_3 \leqslant 1$ of $P_0$ is such that $x_2 +x_3 = 2$ and $x_2 +x_3 = -2$ for the points ${\scriptsize
\begin{pmatrix}1\\ 1\\ 1\end{pmatrix},\scriptsize
\begin{pmatrix}1\\ -1\\ -1\end{pmatrix} } \in \firstX$ respectively, hence these points do not figure in $\secondX$. See Figure \ref{fig:second_X} for an illustration.
\begin{figure}[ht!]\centering
\begin{tikzpicture}[line join=round,line cap=round,rotate around y=90,rotate around z=-2,scale=2.3]
\tikzset{vertex/.style={draw,fill,circle,inner sep=1.1pt,minimum width=1pt},
frame/.style={line width=.8pt,draw=darkgray,dotted},
axes/.style={line width=.5pt,draw=darkgray,->}}
\draw[axes](0,1,0)--(0,1.5,0) node[left] at (0,1.5,0){$x_1$};
\draw[axes](1,0,0)--(1.25,0,0) node[above,yshift=-.5mm] at (1.25,0,0){$x_3$};
\draw[axes](0,0,1)--(0,0,1.5) node[above] at (0,0,1.5){$x_2$};

%\draw[dotted,line width=1.7pt](1.4,0,-.4)--(-.4,0,1.4)node [below] at (-.4,0,1.4){$\mathrm{aff}(F_0)$};

\draw[line width=1.2pt,gray,fill=darkgray!30](0,0,0)--(1,0,0)--(0,0,1)-- cycle;
\node [vertex,fill=white] at (0,0,0) {};
\node[label={below:$P_0$}] at (0,0,.5) {};

%\draw[line width=1.7pt](1,0,0)--(0,0,1) node [midway,right,yshift=1mm] {$F_0$};

\draw[frame](0,0,1)--(0,1,1);
\draw[frame](0,0,0)--(0,1,0);
\draw[frame](1,0,0)--(1,1,0);
\draw[frame](1,0,1)--(1,1,1);
\draw[frame](1,0,-1)--(1,1,-1);
\draw[frame](0,0,-1)--(0,1,-1);
\draw[frame](-1,0,0)--(-1,1,0);
\draw[frame](-1,0,1)--(-1,1,1);
\draw[frame](-1,0,-1)--(-1,1,-1);

\draw[frame](0,0,-1)--(0,0,0)--(-1,0,0);

\draw[frame](0,1,-1)--(0,1,1);
\draw[frame](-1,1,0)--(1,1,0);

\draw[frame](-1,1,-1)--(1,1,-1)--(1,1,1)--(-1,1,1)--cycle;
\draw[frame](-1,0,-1)--(1,0,-1)--(1,0,1)--(-1,0,1)--cycle;

\node [vertex,fill=white] at (0,0,1) {};
\node [vertex,fill=white,label={below:$0$}] at (0,0,0) {};
\node [vertex,fill=white] at (1,0,0) {};

\node[vertex] at(0,1,1) {};
\node[vertex] at(-1,1,1) {};

\node[vertex] at(1,1,0) {};
\node[vertex,label={above left:$e_1$}] at(0,1,0) {};

\node[vertex] at(-1,1,0) {};
\node[vertex] at(1,1,-1) {};
\node[vertex] at(0,1,-1) {};
\end{tikzpicture}

\caption{The black points in the hyperplane $\{1 \} \times \R^2$ represent $\secondX$ when $P_0$ is the $2$-simplex.}
\label{fig:second_X}
\end{figure}
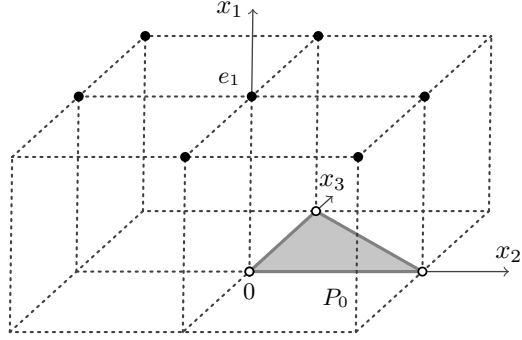
\end{ex}

With this first reduction of the ground set, we are able to simplify the description of the incompatibility closure operator. By construction, $u(E) \in \{-1,0,1\}$ for all $u \in \secondX$ and $E \in \mathcal{F}$. Let $A \subseteq \secondX$. It can be easily verified that $\vertex(P_0) \cup A$ contains no incompatible triple if and only if there is no $E \in \mathcal{F}$ and no $u, v \in A$ such that $u(E) \cdot v(E) = -1$. Thus we get for $A \subseteq \firstX$:
\begin{equation*}
\inccl(A) = 
\begin{cases}
A & \text{if } A \subseteq \secondX \text{ and there is no } E \in \mathcal{F} \text{ and no } u, v \in A \text{ such that } u(E) \cdot v(E) = -1,\\
\firstX &\text{otherwise}.
\end{cases}
\end{equation*}

\subsection{Removing points that can be avoided by translating the candidate set}
\label{sec:tiling}

We now establish useful structural properties of $\firstX$ that will lead to a different reduction of the ground set.

For each $c \in \{0,1\}^{d-1}$, we define a corresponding \emph{tile}
\begin{equation}\label{eq:tile}
T(c) \coloneqq M_d(0)^{-1} \cdot \left(\{1\} \times \prod_{i = 1}^{d-1} \{0-c_i,1-c_i\}\right)\,.
\end{equation}
Thus each tile is the vertex set of a $(d-1)$-parallelepiped in $\{1\} \times \R^{d-1}$.

\begin{lem} \label{lem:tiling}
Consider the set $\firstX$ defined as in \eqref{eq:first_X} and, for some $c \in \{0,1\}^{d-1}$ let the matrix $M_d(c)$ be defined as in  \eqref{eq:M_d} and $T(c)$ as in \eqref{eq:tile}. Then $T(c) = M_d(c)^{-1} \cdot (\{1\} \times \{0,1\}^{d-1})$. Thus $\firstX$  is the union of the $2^{d-1}$ tiles $T(c)$ for $c \in \{0,1\}^{d-1}$. Moreover, $\firstX = M_d(0)^{-1} \cdot (\{1\} \times \{-1,0,1\}^{d-1})$.
\end{lem}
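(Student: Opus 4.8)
The plan is to prove the three assertions in turn, the key one being the first identity $T(c) = M_d(c)^{-1} \cdot (\{1\} \times \{0,1\}^{d-1})$; once this is established, the decomposition of $\firstX$ into tiles is immediate from the definition \eqref{eq:first_X}, and the final ``compact'' formula follows by a short direct computation.

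First I would record the explicit form of $M_d(c)^{-1}$. Writing $M_d(c)$ as in \eqref{eq:M_d}, block inversion (using that the bottom-right block is $M_{d-1}$, which is invertible and unimodular) gives
\[
M_d(c)^{-1} =
\begin{pmatrix}
1 & \begin{matrix} 0 & \cdots & 0 \end{matrix}\\[2pt]
-M_{d-1}^{-1} c & \fbox{\parbox[c][.9cm][c]{.9cm}{\centering$M_{d-1}^{-1}$}}
\end{pmatrix}.
\]
In particular $M_d(c)^{-1} = M_d(0)^{-1} - (M_d(0)^{-1} e_1)\, c^\top$ in the appropriate sense; more usefully, for any vector $\binom{1}{y}$ with $y \in \R^{d-1}$ one computes
\[
M_d(c)^{-1} \binom{1}{y} = \binom{1}{\,M_{d-1}^{-1}(y - c)\,} = M_d(0)^{-1}\binom{1}{\,y - c\,}.
\]
This single identity is really the engine of the whole lemma.

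Now for the first assertion: applying the displayed identity with $y$ ranging over $\{0,1\}^{d-1}$ shows that $M_d(c)^{-1}\cdot(\{1\}\times\{0,1\}^{d-1})$ equals $M_d(0)^{-1}\cdot\big(\{1\}\times\prod_{i=1}^{d-1}\{0-c_i,\,1-c_i\}\big)$, which is exactly the definition \eqref{eq:tile} of $T(c)$. For the second assertion, substitute into \eqref{eq:first_X}: $\firstX = \bigcup_{c\in\{0,1\}^{d-1}} M_d(c)^{-1}\cdot(\{1\}\times\{0,1\}^{d-1}) = \bigcup_{c\in\{0,1\}^{d-1}} T(c)$, so $\firstX$ is the union of the $2^{d-1}$ tiles. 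For the last assertion, note that as $c$ ranges over $\{0,1\}^{d-1}$ and $z$ over $\{0,1\}^{d-1}$, the vector $z - c$ ranges exactly over $\{-1,0,1\}^{d-1}$ (each coordinate independently: the four pairs $(c_i,z_i)$ give $z_i - c_i \in \{-1,0,1\}$, covering all three values, with $0$ attained twice). Hence
\[
\firstX = \bigcup_{c\in\{0,1\}^{d-1}} M_d(0)^{-1}\cdot\big(\{1\}\times(\{0,1\}^{d-1}-c)\big) = M_d(0)^{-1}\cdot(\{1\}\times\{-1,0,1\}^{d-1}),
\]
as claimed.

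I do not expect a real obstacle here: the only thing to be careful about is getting the block-inverse of $M_d(c)$ right and checking that the row-ordering/sign conventions in \eqref{eq:M_d} match, so that $M_d(c)^{-1}\binom{1}{y}$ genuinely equals $M_d(0)^{-1}\binom{1}{y-c}$ rather than something with a stray sign. Everything after that is bookkeeping about how $\{0,1\} - \{0,1\}$ sweeps out $\{-1,0,1\}$.
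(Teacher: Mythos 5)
Your proof is correct and follows essentially the same route as the paper: both compute the block inverse of $M_d(c)$ and reduce everything to the identity $M_d(c)^{-1}\binom{1}{y} = M_d(0)^{-1}\binom{1}{y-c}$ (the paper phrases this as computing $M_d(0)\cdot M_d(c)^{-1}$ explicitly, which is the same fact). The final observation that $\{0,1\}^{d-1} - \{0,1\}^{d-1} = \{-1,0,1\}^{d-1}$ coordinatewise matches the paper's conclusion.
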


\begin{proof}
Letting $c' \coloneqq - M_{d-1}^{-1}\cdot c$, we have
\begin{equation*}
M_d(c)^{-1} = 
\begin{pmatrix}
1 & \begin{matrix} 0 & \cdots & 0 \end{matrix}\\
\begin{matrix} c'_1 \\ \vdots\\ c'_{d-1} \end{matrix} & \fbox{\parbox[c][1cm][c]{1cm}{\centering$M^{-1}_{d-1}$}} \\
\end{pmatrix}\,,
\qquad
\text{and thus}
\qquad
M_d(0) \cdot M_{d}(c)^{-1} = 
\begin{pmatrix}
1 & \begin{matrix} 0 & \cdots & 0 \end{matrix}\\
\begin{matrix} -c_1 \\ \vdots\\ -c_{d-1} \end{matrix} & \fbox{\parbox[c][1cm][c]{1cm}{\centering$I$}} \\
\end{pmatrix}\,.
\end{equation*} 
Hence, we have
\begin{align*}
M_d(0) \cdot M_d(c)^{-1} \cdot (\{1\} \times \{0,1\}^{d-1})
&=\begin{pmatrix}1 \\-c\end{pmatrix} + \{0\} \times \{0,1\}^{d-1}\\
&=\{1\} \times \prod_{i = 1}^{d-1} \{0-c_i,1-c_i\}\,.
\end{align*}
By applying $M_d(0)^{-1}$, this implies immediately that $T(c) = M_d^{-1}(c) \cdot (\{1\} \times \{0,1\}^{d-1})$. Moreover, taking the union over all $c \in \{0,1\}^{d-1}$, we get $\firstX = M_d(0)^{-1} \cdot (\{1\} \times \{-1,0,1\}^{d-1})$.
\end{proof} 

As established in the next lemma, after applying the incompatibility closure operator to any subset of $\firstX$, we get a set that is either included in a tile or equal to the full ground set $\firstX$.

\begin{lem} \label{lem:inccl_tile}
Let $A \subsetneq \firstX$ be a closed set of $\inccl$. Then $A$ is contained in some tile.
\end{lem}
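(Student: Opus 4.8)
The statement says: if $A \subsetneq \firstX$ is closed for $\inccl$, then $A$ is contained in some tile. By the simplified description of $\inccl$ established just above the lemma, since $A$ is closed and $A \neq \firstX$, we must have $A \subseteq \secondX$ and, crucially, there is no $E \in \mathcal{F}$ and no $u, v \in A$ with $u(E) \cdot v(E) = -1$. The plan is to use exactly this last condition, together with the ``tile'' description from Lemma~\ref{lem:tiling}, namely $\firstX = M_d(0)^{-1} \cdot (\{1\} \times \{-1,0,1\}^{d-1})$, where the tile $T(c)$ corresponds to coordinates lying in $\prod_i \{-c_i, 1-c_i\}$ after applying $M_d(0)$.

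\textbf{Key steps.} First I would move to the coordinate system in which $\firstX$ looks like $\{1\} \times \{-1,0,1\}^{d-1}$, i.e. replace each point $u \in \firstX$ by $y \coloneqq M_d(0) \cdot u$, so that $y = (1, y_2, \dots, y_d)$ with each $y_i \in \{-1, 0, 1\}$. In these coordinates, a tile $T(c)$ is precisely the set of $y$ with $y_i \in \{-c_i, 1-c_i\}$ for each $i$; so to show $A$ lies in a single tile it suffices to show that for each coordinate $i \in \{2,\dots,d\}$, the set of values $\{y_i \mid y \in M_d(0)\cdot A\}$ is contained in some pair $\{-1,0\}$ or $\{0,1\}$ — equivalently, it is \emph{not} the case that both $+1$ and $-1$ occur as the $i$-th coordinate of points of $M_d(0)\cdot A$. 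The second step is to connect ``$y_i = \pm 1$ for some $y \in M_d(0)\cdot A$'' with the incompatibility condition. Here I would use that the rows of $M_d(0)$ encode facet-defining forms: the $i$-th coordinate of $M_d(0) \cdot u$ is exactly the slack $x(E_i)$ or $1 - x(E_i)$ for the facet $F_{i}$ of $P_0$ in the $\mathcal{H}$-embedding (by the $\mathcal V$-embedding interpretation in Lemma~\ref{lem:embeddings}), and these facets correspond to hyperedges $E \in \mathcal{F}$. So if some coordinate of $M_d(0) \cdot A$ took both value $+1$ and value $-1$, there would be $u, v \in A$ and $E \in \mathcal{F}$ with $u(E), v(E)$ (or their ``$1 - \cdot$'' counterparts) equal to $+1$ and $-1$ respectively, giving $u(E)\cdot v(E) = -1$ (after the appropriate affine normalization), contradicting closedness of $A$. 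Hence for each $i$ the coordinate values avoid one of $\pm 1$, so we may pick $c_i$ accordingly, and $A \subseteq T(c)$.

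\textbf{Main obstacle.} The delicate point is bookkeeping the correspondence between the rows of $M_d(0)$ (equivalently the coordinates $y_2, \dots, y_d$) and the facets of $P_0$ / hyperedges in $\mathcal{F}$, and handling the two shapes of facet inequality ($x(E) \geqslant 0$ versus $x(E) \leqslant 1$) uniformly — in the first case $y_i = x(E)$ so $y_i \in \{-1,0,1\}$ corresponds directly to $u(E) \in \{-1,0,1\}$, while in the second $y_i = 1 - x(E)$ so the roles of the endpoints are swapped. One must check that in both cases, ``coordinate $i$ of $M_d(0)\cdot A$ contains both $+1$ and $-1$'' translates into the existence of $E \in \mathcal{F}$ and $u,v \in A$ with $u(E)\cdot v(E) = -1$ (possibly after translating by the constant, e.g. using $u(E) = 2, v(E) = 0$ versus $u(E)=0$, which requires care since $A \subseteq \secondX$ already forces $u(E) \in \{-1,0,1\}$). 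I would also need to verify that every coordinate $i \in \{2,\dots,d\}$ of the transformed space indeed corresponds to a facet of $P_0$, which follows since $M_{d-1}$ is the embedding transformation matrix of $\Gamma_0$ and its rows are facet forms of $P_0$; thus no coordinate is ``free.'' Once this dictionary is set up, the conclusion is immediate: choosing $c_i \in \{0,1\}$ so that $\{-1,0,1\} \setminus \{c_i - 1\}$-wait, more simply, choosing $c_i = 1$ if the forbidden value is $+1$ and $c_i = 0$ if it is $-1$ (and either choice if only $0$ appears), we obtain $A \subseteq T(c)$.
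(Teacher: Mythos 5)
Your proposal is correct and follows essentially the same route as the paper: read off the tile membership from the coordinates $y=M_d(0)\cdot u\in\{1\}\times\{-1,0,1\}^{d-1}$, observe that the rows of $M_{d-1}$ encode slacks of the facets $F'_2,\dots,F'_d$ of $P_0$, and match the presence of both $+1$ and $-1$ in a coordinate to an incompatible pair $u(E)\cdot v(E)=-1$. The paper phrases it as a proof by contradiction, you as a direct contrapositive, but the content is the same.

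The ``delicate point'' you flag — the possibility that the relevant facet of $P_0$ has the form $x(E)\leqslant 1$ rather than $x(E)\geqslant 0$ — in fact never occurs, and there is a clean reason you could have used: the rows of $M_{d-1}=M(\Gamma_0)$ correspond exactly to the facets $F'_2,\dots,F'_d$ of the simplicial core $\Gamma_0$, and each of these (by the defining property of a simplicial core) contains the vertex $v_{d+1}$, which is sent to the origin in the $\mathcal H$-embedding. An inequality $x(E)\leqslant 1$ cannot be tight at the origin, so every facet indexing a row of $M_{d-1}$ is of the form $x(E)\geqslant 0$, and the slack is exactly $u(E)$ with no affine shift to track. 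Your alternative route (noting that $A\subseteq\secondX$ forces $u(E)\in\{-1,0,1\}$, so the value $u(E)=2$ required to realize $y_i=-1$ under $y_i=1-u(E)$ is impossible) also dispatches the case, but you leave it as a concern rather than a resolution; either completion would make the argument airtight. The paper's proof silently relies on the ``origin is on the facet'' observation without stating it.
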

\begin{proof}
By contradiction, assume that $A$ is not contained in a tile. Then $A$ contains two points $u$ and $v$ such that $(M_d(0) \cdot u)_i \cdot (M_d(0) \cdot v)_i = -1$ for some index $i > 1$. But then the set $E \in \mathcal{F}$ corresponding to the $(i-1)$-th facet of the simplicial core $\Gamma_0$ has $u(E) \cdot v(E) = -1$. This implies that $\inccl(A) = \firstX$, a contradiction. 
\end{proof}

In the following lemma, we introduce certain translations that when applied to a tile produce another tile.

\begin{lem} \label{lem:tile_tra}
Consider the tile $T(c)$ for some $c \in \{0,1\}^{d-1}$. Then for every $a \in T(c)$, there exists $c' \in \{0,1\}^{d-1}$ such that $T(c)+e_1-a = T(c')$.
\end{lem}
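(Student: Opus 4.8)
The plan is to work in the coordinate system obtained by applying $M_d(0)$, where everything becomes transparent. Recall from Lemma~\ref{lem:tiling} that $T(c) = M_d(0)^{-1} \cdot \left(\{1\} \times \prod_{i=1}^{d-1}\{-c_i, 1-c_i\}\right)$, so $M_d(0) \cdot T(c) = \{1\} \times \prod_{i=1}^{d-1}\{-c_i, 1-c_i\}$. Also, since the first row of $M_d(0)$ is $(1, 0, \ldots, 0)$, we have $M_d(0) \cdot e_1 = e_1$. Thus I will first translate the whole problem: setting $b \coloneqq M_d(0) \cdot a$ and noting $M_d(0) \cdot (T(c) + e_1 - a) = M_d(0)\cdot T(c) + e_1 - b$, it suffices to show that $M_d(0)\cdot T(c) + e_1 - b$ equals $M_d(0) \cdot T(c')$ for a suitable $c' \in \{0,1\}^{d-1}$.

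The key computation is then elementary. Write $b = (1; b_2, \ldots, b_d)$ where $b_i \in \{-c_{i-1}, 1-c_{i-1}\}$ for each $i \in \{2,\ldots,d\}$; in particular $b_i \in \{-1, 0, 1\}$ and, more precisely, $b_i + c_{i-1} \in \{0,1\}$. The first coordinate of every point in $M_d(0)\cdot T(c) + e_1 - b$ is $1 + 1 - 1 = 1$, as required. For the remaining coordinates, the $(i-1)$-th coordinate of $M_d(0)\cdot T(c)$ ranges over $\{-c_{i-1}, 1-c_{i-1}\}$, so after adding $-b_i$ it ranges over $\{-c_{i-1} - b_i,\ 1 - c_{i-1} - b_i\}$. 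The plan is to define $c'_{i-1} \coloneqq c_{i-1} + b_i \in \{0,1\}$ (this is exactly the observation that $b_i + c_{i-1} \in \{0,1\}$, which holds because $b_i \in \{-c_{i-1}, 1-c_{i-1}\}$); then the set of values in coordinate $i-1$ becomes $\{-c'_{i-1},\ 1 - c'_{i-1}\}$, which is precisely the $(i-1)$-th factor in the product defining $M_d(0)\cdot T(c')$. Hence $M_d(0)\cdot T(c) + e_1 - b = \{1\} \times \prod_{i=1}^{d-1}\{-c'_i, 1-c'_i\} = M_d(0)\cdot T(c')$, and applying $M_d(0)^{-1}$ gives $T(c) + e_1 - a = T(c')$.

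There is really no serious obstacle here; the only thing to be careful about is the bookkeeping between the two index ranges ($i \in \{2,\ldots,d\}$ for coordinates of points in $\R^d$ versus $j = i-1 \in [d-1]$ for the entries of $c$), and the fact that one must check $c' \in \{0,1\}^{d-1}$, which is the whole point: it follows immediately from $a \in T(c)$, equivalently $b \in M_d(0)\cdot T(c)$, equivalently $b_i \in \{-c_{i-1}, 1-c_{i-1}\}$. I would present the argument essentially as above, perhaps condensed into a short displayed computation of $M_d(0)\cdot(T(c) + e_1 - a)$, and conclude by invoking the invertibility of $M_d(0)$.
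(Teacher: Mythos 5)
Your proof is correct and takes essentially the same route as the paper's: both pass to $M_d(0)$-coordinates and define $c'_j = c_j + b_{j+1} \in \{0,1\}$ (the paper's $\overline{b}$), with your direct computation of set equality even sparing the paper's final cardinality argument for upgrading the inclusion $T(c)+e_1-a \subseteq T(c')$ to an equality. One small slip worth fixing: $M_d(0)\cdot e_1 = e_1$ holds because the first \emph{column} of $M_d(0)$ is $e_1$ (since $c=0$), not merely because its first row is $(1,0,\dots,0)$.
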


\begin{proof}
Let 
$T=T(c)$ for some $c \in \{0,1\}^{d-1}$ and fix $a \in T$. We want to prove that there exists $c'$ such that, for each $x \in T$, $x+e_1 - a \in T(c')$. The statement then follows from the fact that each tile has the same number of points and the translation is an invertible map. Fix $x \in T$ and
let $b \coloneqq M_d(0) \cdot a$ and $y \coloneqq M_d(0) \cdot x$. We can write $b_{i+1} = \overline{b}_i - c_{i}$ and $y_{i+1} = \overline{y}_i - c_{i}$ for $i \in [d-1]$, where $\overline{b}$ and $\overline{y}$ are some vectors in $\{0,1\}^{d-1}$. Then we have 
\[
y_{i+1} - b_{i+1} = (\overline{y}_i - c_{i}) - (\overline{b}_i - c_{i}) = \overline{y}_i - \overline{b}_i
\]
for $i \in [d-1]$. It follows that $y + e_1 - b \in \{1\} \times \prod_{i=1}^{d-1} \{0-\overline{b}_i,1-\overline{b}_i\}$, and thus $x + e_1 - a \in T(\overline{b})$. 
Since $\overline{b}$ only depends on $a$, the thesis follows by setting $c'=\overline{b}$.
\end{proof}

Let $\preccurlyeq$ denote the usual lexicographic order on $\R^d$: $a \preccurlyeq b$ whenever $a = b$ or there is an index $j \in [d]$ with $a_j < b_j$ and $a_i = b_i$ for all $i < j$. Below, we will use the linear ordering $\altlexeq$ on $\R^d$ defined through the linear isomorphism $x \mapsto M_d(0) \cdot x$ by
$$
a \altlexeq b \iff M_d(0) \cdot a \preccurlyeq M_d(0) \cdot b\,.
$$

Consider a subset $A$ of some tile $T$. By Lemma \ref{lem:tile_tra}, for every $a \in A$, the translate $A':=A+e_1-a$ is contained in some tile $T'$, and thus in particular contained in $\firstX$. Moreover, the $d$-polytopes $\conv(\vertex{(P_0)} \cup A)$ and $\conv(\vertex{(P_0)} \cup A')$ are affinely isomorphic. In order to eliminate some redundancies, we wish to choose $a$ such that $A'$ is contained in a smaller portion of the ground set $\firstX$. Lemma~\ref{lem:final_X} proves that this can be achieved. 

\begin{lem}\label{lem:final_X}
Let $A$ be a subset of $\firstX$ that is contained in some tile. Then there exists $a^* \in A$ such that $A + e_1 - a^*$ is contained in $\{x \in \firstX \mid e_1 \altlexeq x\}$.
\end{lem}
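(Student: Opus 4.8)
The plan is to exploit the linear isomorphism $x \mapsto M_d(0)\cdot x$ that carries the ordering $\altlexeq$ to the usual lexicographic order $\preccurlyeq$, and to choose $a^*$ as the $\altlexeq$-minimum of $A$. Since $A$ is contained in a tile, Lemma~\ref{lem:tile_tra} guarantees that $A' \coloneqq A + e_1 - a^*$ is again contained in some tile, hence in $\firstX$; it remains to show that $e_1 \altlexeq x$ for every $x \in A'$, i.e.\ that $e_1$ is $\altlexeq$-minimal in $A'$. Equivalently, after applying $M_d(0)$, we must show that $M_d(0)\cdot e_1 = e_1$ is $\preccurlyeq$-minimal among the points $M_d(0)\cdot(x + e_1 - a^*)$ for $x \in A$.

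First I would transport everything through $M_d(0)$: write $B \coloneqq M_d(0)\cdot A$, $b^* \coloneqq M_d(0)\cdot a^*$, and note that $b^*$ is the $\preccurlyeq$-minimum of $B$ (because $M_d(0)$ is an order isomorphism from $\altlexeq$ to $\preccurlyeq$). We have $M_d(0)\cdot(x + e_1 - a^*) = y + e_1 - b^*$ where $y \coloneqq M_d(0)\cdot x$ ranges over $B$; the first coordinate of each such point is $1$, matching that of $e_1 = M_d(0)\cdot e_1$. So the comparison $e_1 \preccurlyeq y + e_1 - b^*$ reduces to comparing the last $d-1$ coordinates: we need $0 \preccurlyeq \bar y - \bar b^*$ in $\R^{d-1}$, where $\bar y, \bar b^*$ denote the projections onto coordinates $2,\dots,d$. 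But $\bar b^*$ is precisely the $\preccurlyeq$-minimum of $\{\bar y : y \in B\}$ once we know the first coordinate is constant equal to $1$ on $B$ — which is exactly the statement that $B \subseteq \{1\}\times\{0,1\}^{d-1}$ translated appropriately, i.e.\ that $A$ lies in a single tile and Lemma~\ref{lem:tiling} identifies that tile with $M_d(c)^{-1}\cdot(\{1\}\times\{0,1\}^{d-1})$. Hence $\bar y - \bar b^* \succcurlyeq 0$ for all $y \in B$, giving $e_1 \altlexeq x + e_1 - a^*$ for all $x \in A$, as desired.

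The one point requiring a little care — and the place I expect the main friction — is verifying that $a^*$ actually lies in $A$ and that $e_1$ lands in $A'$: we want $A' \subseteq \{x \in \firstX \mid e_1 \altlexeq x\}$, and since $a^* \in A$ we automatically get $e_1 = a^* + e_1 - a^* \in A'$, so $A'$ is nonempty and contains $e_1$, consistent with the claim. The subtlety is that the lexicographic minimum $b^*$ of the finite set $B$ is well-defined and unique, so $a^*$ is a genuine element of $A$; this is immediate since $A$ is finite and nonempty (if $A = \varnothing$ the statement is vacuous, though in the intended application $e_1 \in A$ anyway). I would also double-check the degenerate coordinate bookkeeping: the tile $T(c)$ from \eqref{eq:tile} has $M_d(0)\cdot T(c) = \{1\}\times\prod_{i=1}^{d-1}\{-c_i, 1-c_i\}$, so translating by $e_1 - a^*$ shifts this box to one anchored at the origin in its last $d-1$ coordinates precisely when $a^*$ is the corner minimizing each coordinate — but since the box is a product of two-element sets and we only need coordinatewise-lexicographic minimality (not full coordinatewise minimality), taking the $\preccurlyeq$-minimum suffices and this is exactly $b^*$. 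No convex-hull or $2$-levelness input is needed here; it is purely a statement about translating a subset of a combinatorial box.
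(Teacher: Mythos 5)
Your proof is correct and takes essentially the same approach as the paper: choose $a^*$ as the $\altlexeq$-minimum of $A$, invoke Lemma~\ref{lem:tile_tra} to keep the translate inside $\firstX$, and then transport through the order isomorphism $x \mapsto M_d(0)\cdot x$ to compare first coordinates (all equal to $1$) and then the remaining $d-1$ coordinates. The paper phrases the final step as a contradiction (assuming $a + e_1 - a^* \altlex e_1$ would force $a \altlex a^*$), while you argue directly that lexicographic minimality of $\bar b^*$ among the projections gives $\bar y - \bar b^* \succcurlyeq 0$; these are the same computation written in opposite directions.
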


\begin{proof}
Let $a^*$ be the minimum of $A$ for $\altlexeq$. By Lemma \ref{lem:tile_tra}, $A + e_1 - a^*$ is contained in $\firstX$. By contradiction, assume that there exists $a \in A$ such that $a + e_1 - a^* \altlex e_1$. Thus $M_d(0) \cdot a + e_1 - M_d(0) \cdot a^* = M_d(0) \cdot (a + e_1 - a^*) \prec M_d(0) \cdot e_1 = e_1$. This implies that there exists an index $j \in [d]$ with $j > 1$ such that $(M_d(0) \cdot a)_j < (M_d(0) \cdot a^*)_j$ and $(M_d(0) \cdot a)_i = (M_d(0) \cdot a^*)_i$ for $i < j$. Hence, $a \altlex a^*$, a contradiction. The lemma follows.
\end{proof}

Lemma~\ref{lem:final_X} motivates the following definition of our final ground set, the one used in Algorithm \ref{algo:enumeration}
 (see Figure~\ref{fig:reduced_X} for an illustration):
\begin{equation}\label{eq:final_X}	
\finalX = \finalX(P_0,\Gamma_0) :=\{x \in \secondX \mid e_1 \altlexeq x\}.
\end{equation}

%\begin{ex}\label{ex:reduced_X}
%Referring to definition \eqref{eq:final_X} and to Example \ref{ex:second_X}, we construct the ground sets $\mathcal X(P_0)$ for $P_0$ $2$-simplex (Figure \ref{fig:reduced_X_a}) and for $P_0$ $2$-cube (Figure \ref{fig:reduced_X_b}).
%%
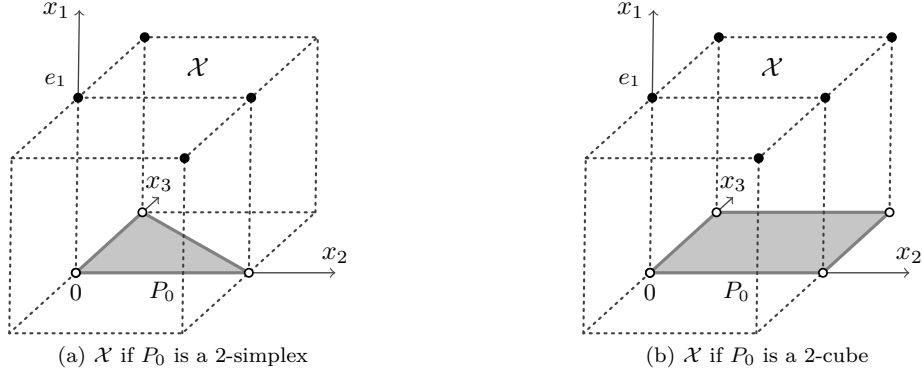
\begin{figure}[ht!]\centering
\subfloat[$\finalX$ if $P_0$ is a $2$-simplex]{\label{fig:reduced_X_a}\parbox[c][4.5cm][c]{.45\textwidth}{\centering
\begin{tikzpicture}[line join=round,line cap=round,rotate around y=90,rotate around z=-2,scale=2.3]
\tikzset{vertex/.style={draw,fill,circle,inner sep=1.1pt,minimum width=1pt},
frame/.style={line width=.8pt,draw=darkgray,dotted},
axes/.style={line width=.5pt,draw=darkgray,->}}
\draw[axes](0,1,0)--(0,1.5,0) node[left] at (0,1.5,0){$x_1$};
\draw[axes](1,0,0)--(1.25,0,0) node[above,yshift=-.5mm] at (1.25,0,0){$x_3$};
\draw[axes](0,0,1)--(0,0,1.5) node[above] at (0,0,1.5){$x_2$};

%\draw[dotted,line width=1.7pt](1.4,0,-.4)--(-.4,0,1.4)node [below] at (-.4,0,1.4){$\mathrm{aff}(F_0)$};

\draw[line width=1.2pt,gray,fill=darkgray!30](0,0,0)--(1,0,0)--(0,0,1)-- cycle;
\node [vertex,fill=white] at (0,0,0) {};
\node[label={[yshift=1mm]below:$P_0$}] at (0,0,.5) {};

%\draw[line width=1.7pt](1,0,0)--(0,0,1) node [midway,right,yshift=1mm] {$F_0$};

\draw[frame](0,0,1)--(0,1,1);
\draw[frame](0,0,0)--(0,1,0);
\draw[frame](1,0,0)--(1,1,0);
\draw[frame](1,0,1)--(1,1,1);

\draw[frame](-1,0,0)--(-1,1,0);
\draw[frame](-1,0,1)--(-1,1,1);

\draw[frame](0,1,0)--(0,1,1);

\draw[frame](-1,1,0)--(1,1,0)--(1,1,1)--(-1,1,1)--cycle;
\draw[frame](0,0,0)--(-1,0,0)--(-1,0,1)--(1,0,1)--(1,0,0);

\node [vertex,fill=white] at (0,0,1) {};
\node [vertex,fill=white,label={below:$0$}] at (0,0,0) {};
\node [vertex,fill=white] at (1,0,0) {};

%\node[vertex] at(1,1,1) {};
\node[vertex] at(0,1,1) {};
\node[vertex] at(-1,1,1) {};

\node[vertex] at(1,1,0) {};
\node[vertex,label={above left:$e_1$}] at(0,1,0) {};

\node at(.5,1,.5) {$\finalX$};

\end{tikzpicture}
}}\hspace{5mm}
\subfloat[$\finalX$ if $P_0$ is a $2$-cube]{\label{fig:reduced_X_b}\parbox[c][4.5cm][c]{.45\textwidth}{\centering
\begin{tikzpicture}[line join=round,line cap=round,rotate around y=90,rotate around z=-2,scale=2.3]
\tikzset{vertex/.style={draw,fill,circle,inner sep=1.1pt,minimum width=1pt},
frame/.style={line width=.8pt,draw=darkgray,dotted},
axes/.style={line width=.5pt,draw=darkgray,->}}
\draw[axes](0,1,0)--(0,1.5,0) node[left] at (0,1.5,0){$x_1$};
\draw[axes](1,0,0)--(1.25,0,0) node[above,yshift=-.5mm] at (1.25,0,0){$x_3$};
\draw[axes](0,0,1)--(0,0,1.5) node[above] at (0,0,1.5){$x_2$};

%\draw[dotted,line width=1.7pt](1.4,0,-.4)--(-.4,0,1.4)node [below] at (-.4,0,1.4){$\mathrm{aff}(F_0)$};

\draw[line width=1.2pt,gray,fill=darkgray!30](0,0,0)--(1,0,0)--(1,0,1)--(0,0,1)-- cycle;
\node [vertex,fill=white] at (0,0,0) {};
\node[label={[yshift=1mm]below:$P_0$}] at (0,0,.5) {};

%\draw[line width=1.7pt](1,0,0)--(0,0,1) node [midway,right,yshift=1mm] {$F_0$};

\draw[frame](0,0,1)--(0,1,1);
\draw[frame](0,0,0)--(0,1,0);
\draw[frame](1,0,0)--(1,1,0);
\draw[frame](1,0,1)--(1,1,1);

\draw[frame](-1,0,0)--(-1,1,0);
\draw[frame](-1,0,1)--(-1,1,1);

\draw[frame](0,1,0)--(0,1,1);

\draw[frame](-1,1,0)--(1,1,0)--(1,1,1)--(-1,1,1)--cycle;
\draw[frame](0,0,0)--(-1,0,0)--(-1,0,1)--(0,0,1);

\node [vertex,fill=white] at (0,0,1) {};
\node [vertex,fill=white] at (1,0,1) {};
\node [vertex,fill=white,label={below:$0$}] at (0,0,0) {};
\node [vertex,fill=white] at (1,0,0) {};

\node[vertex] at(1,1,1) {};
\node[vertex] at(0,1,1) {};
\node[vertex] at(-1,1,1) {};

\node[vertex] at(1,1,0) {};
\node[vertex,label={above left:$e_1$}] at(0,1,0) {};

\node at(.5,1,.5) {$\finalX$};

\end{tikzpicture}
}}

\caption{$\mathcal H$-embedding of $P_0$ in $\{0\} \times \R^2$ with respect to its simplicial core $\Gamma_0$, together with the corresponding ground set $\finalX = \finalX(P_0,\Gamma_0)$ in $\{1\} \times \R^2$ (indicated by black points).}
\label{fig:reduced_X}
\end{figure}
%\end{ex}

We establish an invariance property of the closure operator $\dchcl$ under translations, that will be useful to prove that the closed sets of the restriction of the closure operator $\cl$ to $\finalX$ form a complete family.

\begin{lem} \label{lem:dch_inv_tra}
Consider a set $A \subseteq \firstX$ that is contained in some tile. If $A$ is closed for $\dchcl$, then $A+e_1-a$ is closed for $\dchcl$ for every $a \in A$.
\end{lem}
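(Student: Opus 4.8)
The plan is to unwind the definition of $\dchcl$ and show that the family of slabs valid for $\vertex(P_0)\cup A$ is, in a controlled way, the same as the family valid for $\vertex(P_0)\cup(A+e_1-a)$, after accounting for the translation. Write $t \coloneqq e_1 - a$ for the translation vector; note $t_1 = 0$ since both $e_1$ and $a$ lie in $\{1\}\times\R^{d-1}$. Set $A' \coloneqq A+t$; by Lemma~\ref{lem:tile_tra} we know $A' \subseteq \firstX$ (it lies in some tile), so $\dchcl(A')$ is well-defined. We must show $\dchcl(A') = A'$, i.e. that every point $x \in \firstX$ satisfying all slab constraints in $\mathcal{E}(\vertex(P_0)\cup A')$ already lies in $A'$.

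The key step is to relate $\mathcal{E}(\vertex(P_0)\cup A')$ to $\mathcal{E}(\vertex(P_0)\cup A)$. First I would observe that since $A$ is contained in a tile, by Lemma~\ref{lem:inccl_tile}-type reasoning (or directly from $A$ being $\dchcl$-closed and the structure of $\firstX$) the relevant slabs behave linearly under the translation: for a hyperedge $E \subseteq [d]$, the quantity $x(E)$ shifts by the constant $t(E)$ when $x \mapsto x+t$. Since $\vertex(P_0) \subseteq \{0\}\times\R^{d-1}$ and the embedding is an $\mathcal{H}$-embedding, the vertices of $P_0$ satisfy $x(E)\in\{0,1\}$ for every $E$ with $1 \notin E$, and $x(E) = x(E\setminus\{1\})\in\{0,1\}$ for $E$ with $1\in E$ (as $x_1=0$). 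The crucial point is that the hyperedge constraints in $\mathcal E(\vertex(P_0)\cup A)$ involving a given $E$ remain valid, possibly for a "shifted" hyperedge or with $E$ unchanged: I would argue that $E \in \mathcal{E}(\vertex(P_0)\cup A)$ if and only if $E \in \mathcal{E}(\vertex(P_0)\cup A')$, using that $e_1 \in A$ forces $t(E) = 1 - a(E)$ and that $A' \ni e_1 + t = e_1 + e_1 - a = 2e_1 - a$... — here care is needed, since $e_1 + t$ need not equal $e_1$. The cleaner route: exploit that $a \in A \subseteq$ tile and $e_1 \in A$, so both $a$ and $e_1$ satisfy every $E \in \mathcal{E}(\vertex(P_0)\cup A)$, giving $a(E), e_1(E) \in \{0,1\}$ for all such $E$; combined with $P_0$'s vertices landing in $\{0,1\}$, the translated set $\vertex(P_0)\cup A' = \vertex(P_0) \cup (A + e_1 - a)$ has every point's $E$-value in $\{0,1\}$ for exactly the same collection of $E$'s, because adding the integer vector $e_1 - a$ (whose $E$-sum is $e_1(E) - a(E) \in \{-1,0,1\}$) to points with $E$-value in $\{0,1\}$... this still isn't automatically in $\{0,1\}$, so one must use that $A$ sits in a single tile to pin down that the $E$-values of $A$ are not just in $\{0,1\}$ but actually all equal, or that the translate compensates exactly.

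Once the identity $\mathcal{E}(\vertex(P_0)\cup A') = \mathcal{E}(\vertex(P_0)\cup A)$ (or a suitable bijective correspondence of the slab systems under $x\mapsto x+t$) is established, the conclusion is immediate: the affine map $x \mapsto x+t$ is a bijection of $\firstX$ onto a subset of $\firstX$ carrying the polytope $\{x : 0 \le x(E)\le 1,\ \forall E \in \mathcal{E}(\vertex(P_0)\cup A)\}$ onto $\{x : 0 \le x(E)\le 1,\ \forall E \in \mathcal{E}(\vertex(P_0)\cup A')\}$, hence $\dchcl(A') = \dchcl(A) + t = A + t = A'$, using that $A$ is $\dchcl$-closed. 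I expect the main obstacle to be precisely the bookkeeping in the middle step: verifying that translating by $e_1 - a$ does not destroy validity of any hyperedge constraint nor create spurious new ones, which requires carefully using that $A$ lies in one tile (so the translation is "combinatorially trivial" with respect to the tile's coordinate directions, by Lemma~\ref{lem:tile_tra}) together with $e_1 \in A$. The tile structure from Lemma~\ref{lem:tiling} and Lemma~\ref{lem:tile_tra} is what makes the translation well-behaved, and I would lean on those lemmas heavily rather than redoing the linear algebra.
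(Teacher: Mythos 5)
Your overall strategy matches the paper's: set $t = e_1 - a$, form $A' = A + t$, relate the slab system of $\vertex(P_0)\cup A'$ to that of $\vertex(P_0)\cup A$, and pull $\dchcl(A')$ back through the translation to conclude from $\dchcl(A)=A$. However, the step you yourself flag as "the main obstacle" is a genuine gap, and the specific idea that closes it is absent. The literal identity $\mathcal{E}(\vertex(P_0)\cup A') = \mathcal{E}(\vertex(P_0)\cup A)$ that you first propose is false in general: if $1\notin E$ and $a(E)=1$, then $t(E)=-1$, so any $x\in A$ with $x(E)=0$ gives $(x+t)(E)=-1$ and $E$ drops out of the slab system of the translate. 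Your fallback suggestions (that the $E$-values of $A$ are all equal, or that the tile lemmas make the translation "combinatorially trivial") do not rescue this; points of a single tile genuinely take both values $0$ and $1$ on a hyperedge in general.

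The missing idea in the paper's proof is that the correspondence between the two slab systems is not the identity but the map $E \mapsto E' := E \vartriangle \{1\}$ whenever $(e_1-a)(E)\neq 0$ (and $E':=E$ otherwise). Because every point $x$ of $\firstX$ has $x_1=1$, toggling whether $1\in E$ changes $x(E)$ by exactly $\pm 1$, which precisely cancels the shift $t(E)=(e_1-a)(E)\in\{-1,0,1\}$; one then checks $(x+t)(E') = x(E)$ for all $x\in\firstX$ and $x(E')=x(E)$ for all $x\in\vertex(P_0)$ (where $x_1=0$), so $E'\in\mathcal{E}(\vertex(P_0)\cup A')$. Bijectivity is obtained by noting the map is injective and applying the same construction in the reverse direction with $a' = 2e_1 - a \in A'$, which uses $e_1\in A$ (guaranteed since $A$ is $\dchcl$-closed). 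With this bijection in hand, your concluding paragraph is exactly the paper's final step. So the architecture of your argument is right, but without identifying the symmetric-difference trick the proof does not go through.
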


\begin{proof} 
For convenience, define $A' \coloneqq A + e_1 - a$. By Lemma~\ref{lem:tile_tra}, $A'$ is also contained in some tile. In particular, $A' \subseteq \firstX$. 

First, we establish a bijection between $\mathcal{E}(\vertex(P_0) \cup A)$ and $\mathcal{E}(\vertex(P_0) \cup A')$. For $E \in \mathcal{E}(\vertex(P_0) \cup A)$, we let $E' \coloneqq E$ if $(e_1-a)(E) = 0$ and $E' \coloneqq E \vartriangle \{1\}$ if $(e_1-a)(E) \neq 0$, where $\vartriangle$ denotes symmetric difference. Notice that $(e_1-a)(E) \in \{-1,0,1\}$. Moreover, $a(E) \in \{0,1\}$ since $E \in \mathcal E(\vertex(P_0) \cup A)$. Hence $(e_1-a)(E) = -1$ iff $1 \notin E$ and $a(E) = 1$, and $(e_1-a)(E) = 1$ iff $1 \in E$ and $a(E) = 0$. Observe that we always have $(e_1-a)(E) = (e_1-a)(E')$.

Take $E \in \mathcal{E}(\vertex(P_0) \cup A)$. For all $x \in \firstX$, we find
\begin{equation}
\label{eq:tra_main}
(x+e_1-a)(E') = x(E') + (e_1-a) (E')
= x(E') + (e_1-a)(E) = x(E) \in \{0,1\}\,.
\end{equation}
Indeed, this is obvious if $(e_1-a)(E) = 0$ since then $E' = E$. If $(e_1-a)(E) = -1$ then $E' = E \cup \{1\}$ and $x(E') = x(E)+1$. Finally, if $(e_1-a)(E)= 1$ then $E' = E \setminus \{1\}$ and $x(E') = x(E)-1$.

Moreover, for all $x \in \vertex(P_0)$, we have
\[
x(E') = x(E) \in \{0,1\}
\]
since $x \in \vertex(P_0)$ implies $x_1 = 0$. Therefore, $E' \in \mathcal{E}(\vertex(P_0) \cup A')$.

So far, we obtained a map $\psi : \mathcal{E}(\vertex(P_0) \cup A) \to \mathcal{E}(\vertex(P_0) \cup A')$ defined as $E \mapsto \psi(E) \coloneqq E'$. This map is injective since if $E, F \in \mathcal{E}(\vertex(P_0) \cup A)$ have $\psi(E) = \psi(F)$ then $E \cap \{2,\ldots,d\} = F \cap \{2,\ldots,d\}$ so that $(e_1 - a)(E) = (e_1 - a)(F)$. It follows that $E = F$. 

Since $A$ is closed for $\dchcl$, we have $e_1 \in A$ and hence $2e_1 - a \in A'$. Let $a' \coloneqq 2e_1 - a$. By applying the reasoning above to $A'$ and $A = A' + e_1 - a'$, we know that there exists an injective map from $\mathcal{E}(\vertex(P_0) \cup A')$ to $\mathcal{E}(\vertex(P_0) \cup A)$. This implies that $\psi$ is in fact a bijection. 

Now assume that $x' \in \dchcl(A')$, or in other words $x'(E') \in \{0,1\}$ for all $E' \in \mathcal{E}(\vertex(P_0) \cup A')$. Then letting $x \coloneqq x' - e_1 + a = x' + e_1 - a'$ and using \eqref{eq:tra_main}, we find that $x(E) = x'(E')$ for all $E\in \mathcal{E}(\vertex(P_0) \cup A)$, and thus $x(E) \in \{0,1\}$ for all $E\in \mathcal{E}(\vertex(P_0) \cup A)$. Hence $x = x'-e_1+a \in \dchcl(A) = A$. We deduce that $\dchcl(A')-e_1+a \subseteq A$, or equivalently, $\dchcl(A') \subseteq A+e_1-a = A'$. Using this, we conclude that $A'$ is a closed set for $\dchcl$, as we desired.
\end{proof}

A similar property is satisfied by $\inccl$. The proof directly follows from the definition of incompatible triple, since $\vertex(P_0) \cup A$ contains an incompatible triple if and only if $\vertex(P_0) \cup (A + e_1 - a)$ contains an incompatible triple.

\begin{lem} \label{lem:inc_inv_tra}
Consider a set $A \subseteq \firstX$ that is contained in some tile. If $A$ is closed for $\inccl$, then $A+e_1-a$ is closed for $\inccl$ for every $a \in A$.
\end{lem}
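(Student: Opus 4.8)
The plan is to follow the template of the proof of Lemma~\ref{lem:dch_inv_tra}, but the argument will be much shorter because $\inccl$ takes only the two values $A$ and $\firstX$. Write $A' \coloneqq A + e_1 - a$. The first step is to observe that, since $A$ is contained in a tile $T(c)$ and $a \in A \subseteq T(c)$, Lemma~\ref{lem:tile_tra} gives $A' \subseteq T(c) + e_1 - a = T(c')$ for some $c' \in \{0,1\}^{d-1}$; in particular $A' \subseteq \firstX$, so $\inccl(A')$ is defined and it suffices to prove $\inccl(A') = A'$. I would then invoke the simplified description of $\inccl$ from Section~\ref{sec:incompatibilities_reduction}: a set $B \subseteq \firstX$ is closed for $\inccl$ if and only if $B = \firstX$, or $B \subseteq \secondX$ and there is no $E \in \mathcal{F}$ and no $u,v \in B$ with $u(E) \cdot v(E) = -1$. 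If $A = \firstX$ the claim is immediate ($\firstX$ is always closed for $\inccl$, and since translation is injective and $A' \subseteq \firstX$ we get $A' = \firstX$ as well), so from now on I assume $A \subsetneq \firstX$, i.e.\ $A \subseteq \secondX$ and, for every $E \in \mathcal{F}$, the value set $V_E \coloneqq \{u(E) : u \in A\}$ does not contain both $-1$ and $+1$; since also $V_E \subseteq \{-1,0,1\}$, this means $V_E \subseteq \{-1,0\}$ or $V_E \subseteq \{0,1\}$.

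The heart of the proof is then a short computation carried out for each of the two closedness conditions. Fix $E \in \mathcal{F}$; since $E \subseteq \{2,\dots,d\}$ we have $(e_1 - a)(E) = -a(E)$, so every $u' = u + e_1 - a \in A'$ satisfies $u'(E) = u(E) - a(E)$, where $u(E)$ and $a(E)$ both lie in $V_E$. Because $V_E$ is contained in a set of two consecutive integers, $u(E) - a(E) \in \{-1,0,1\}$; letting $u'$ and $E$ vary, this shows $A' \subseteq \secondX$. Likewise, if some $u',v' \in A'$ had $u'(E) \cdot v'(E) = -1$, then writing $u' = u + e_1 - a$ and $v' = v + e_1 - a$ with $u,v \in A$, the numbers $u(E) - a(E)$ and $v(E) - a(E)$ would equal $+1$ and $-1$ in some order, forcing $|u(E) - v(E)| = 2$ --- impossible since $u(E), v(E) \in V_E$ and any two elements of $\{-1,0\}$ or of $\{0,1\}$ differ by at most $1$. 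Hence $A'$ satisfies both conditions and is closed for $\inccl$, as required.

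I do not expect a genuine obstacle here: the only points that need care are the degenerate case $A = \firstX$ and the bookkeeping that $1 \notin E$ for $E \in \mathcal{F}$, which is precisely what makes the translation by $e_1 - a$ act on each quantity $x(E)$ as the uniform shift $x(E) \mapsto x(E) - a(E)$; once this is noted, everything reduces to the fact that consecutive integers differ by $1$. An alternative, more geometric route would be to argue directly from the definition of incompatible triple that $\vertex(P_0) \cup A$ contains an incompatible triple if and only if $\vertex(P_0) \cup A'$ does --- using that $x \mapsto x + e_1 - a$ preserves first coordinates and sends families of parallel flats to families of parallel flats, as hinted in the remark preceding the lemma --- but the extra bookkeeping with the certifying facet of $P_0$ makes this less clean to write out than the computation above.
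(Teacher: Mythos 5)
Your route is genuinely different from the paper's. The paper disposes of this lemma in one sentence via the geometric invariance you mention at the end as an ``alternative'': $\vertex(P_0)\cup A$ contains an incompatible triple if and only if $\vertex(P_0)\cup(A+e_1-a)$ does. You instead argue through the simplified description of $\inccl$ from Section~\ref{sec:incompatibilities_reduction}. That description, however, is only justified there for sets containing $e_1$: the paper's argument that $u\notin\secondX$ forces an incompatible triple explicitly takes $v=e_1$. The hypotheses of Lemma~\ref{lem:inc_inv_tra} do not guarantee $e_1\in A$, so your step ``$A\subsetneq\firstX$, i.e.\ $A\subseteq\secondX$ and $V_E$ does not contain both $-1$ and $+1$'' is not correct as stated. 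A concrete counterexample: a singleton $A=\{a\}$ with $a(E)=2$ for some $E\in\mathcal{F}$ can be contained in a tile and is closed for $\inccl$ (one checks that $\vertex(P_0)\cup\{a\}$ carries no incompatible triple), yet $A\not\subseteq\secondX$.

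That said, the gap is shallow and the core of your computation is sound. The condition that a closed $A\subsetneq\firstX$ actually satisfies is $\max V_E-\min V_E\leqslant 1$ for every $E\in\mathcal{F}$, but with no guarantee that $0\in V_E$. This follows directly from the definition of incompatibility: for $u,v\in A$ and $w\in\vertex(P_0)$ off the facet $F_0$, the four relevant flats project (in the $(x_1,x(E))$ plane) to $(0,0),(0,1),(1,u(E)),(1,v(E))$, which can be covered by an admissible pair of parallel hyperplanes precisely when $|u(E)-v(E)|\leqslant 1$. Your arithmetic uses only this diameter bound, not the stronger inclusion of $V_E$ in $\{-1,0\}$ or $\{0,1\}$, so if you derive the bound directly the proof closes. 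Better yet, since $u(E)-v(E)$ is unchanged by $x\mapsto x+e_1-a$ (as $(e_1-a)(E)$ cancels), the quantity $\max V_E-\min V_E$ is translation-invariant, which recovers the paper's one-line argument and makes the two approaches essentially converge.
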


Lemmas \ref{lem:inccl_tile}, \ref{lem:dch_inv_tra} and \ref{lem:inc_inv_tra} imply that if a set $A \subsetneq \firstX$ is closed for the composite operator $\cl = \inccl \circ \dchcl$, then it is contained in a tile and every translate of $A$ of the form $A + e_1 - a$ for $a \in A$, is closed for $\cl$ as well. This allows us to reduce the ground set for $\cl$ to the smaller set $\mathcal X$.

\subsection{Complete family}
\label{sec:complete_family}

In this section, we finally prove that the collection of all closed sets for $\cl$ that are in $\mathcal X$ constitutes a complete family. This is the collection of subsets that the enumeration algorithm parses and tests for $2$-levelness.

\begin{lem}\label{lem:last-closure}
Let $P_0$ be a $2$-level $(d-1)$-dimensional polytope and $\Gamma_0$ a simplicial core of $P_0$. Define $\mathcal{X}$ as in \eqref{eq:final_X}. Then the collection of closed sets of $\cl = \inccl \circ \dchcl$ is a complete family of subsets of $\mathcal X$ with respect to $P_0$, $\Gamma_0$.
\end{lem}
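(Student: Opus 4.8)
The plan is to check the two conditions of Definition~\ref{def:complete} for the family $\mathcal{A}$ consisting of all sets $A \subseteq \finalX$ with $\cl(A) = A$. The first condition, $e_1 \in A$ for every $A \in \mathcal{A}$, is immediate: since $\inccl$ is extensive, $A = \cl(A) = \inccl(\dchcl(A)) \supseteq \dchcl(A)$, and $e_1 \in \dchcl(A)$ because $e_1 \in \firstX$ satisfies every pair of hyperedge constraints. For the second condition, I would take a $2$-level $d$-polytope $P$ having a facet isomorphic to $P_0$, assume $P_0$ is a facet of $P$, and, using the discussion following Lemma~\ref{lem:simplicial_cores}, extend $\Gamma_0$ to a simplicial core $\Gamma$ of $P$ and work in the $\mathcal{H}$-embedding of $P$ with respect to $\Gamma$. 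There $P_0$ is the facet $\{x \in P : x_1 = 0\}$ and appears in its $\mathcal{H}$-embedding with respect to $\Gamma_0$; since $P$ is $2$-level and $e_1 \in \vertex(P) \setminus \vertex(P_0)$, all vertices of $P$ have $x_1 \in \{0,1\}$, so $\vertex(P) = \vertex(P_0) \cup A_0$ with $A_0 \coloneqq \vertex(P_1)$, $P_1 \coloneqq \{x \in P : x_1 = 1\}$. Writing $M_d = M_d(c)$ as in~\eqref{eq:M_d}, Lemma~\ref{lem:tiling} gives $A_0 \subseteq T(c)$, so $A_0$ lies in a single tile; moreover $e_1 \in A_0$ and $P = \conv(\vertex(P_0) \cup A_0)$.

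The next step is to show $A_0$ is closed for $\cl$. It is closed for $\dchcl$ by the last sentence of Section~\ref{sec:dchcl}, since $\conv(\vertex(P_0) \cup A_0) = P$ is $2$-level. For $\inccl$, I would argue that $\vertex(P)$ contains no incompatible triple: if $\{u,v,w\} \subseteq \vertex(P)$ were incompatible, witnessed by a facet $F_0$ of $P_0$, let $F$ be the unique facet of $P$ with $F \cap P_0 = F_0$ (so $F \neq P_0$). Since $P$ is $2$-level, $\vertex(P)$ is contained in $\aff(F)$ together with one parallel hyperplane $H$; as $\aff(F_0) \subseteq \aff(F)$, the translate of $\aff(F_0)$ through any vertex of $P$ lies in one of these two parallel hyperplanes, so the four translates of $\aff(F_0)$ through the vertices of $F_0$ and through $u$, $v$, $w$ are all covered by $\{\aff(F), H\}$. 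But $\aff(F) \notin \{\{0\}\times\R^{d-1}, \{1\}\times\R^{d-1}\}$: it is not $\{x_1 = 0\}$, since otherwise $F = \aff(F)\cap P \subseteq P_0$; and it is not $\{x_1 = 1\}$, since $F \supseteq F_0$ while $F_0$ is a nonempty subset of $\{x_1 = 0\}$. This contradicts the incompatibility of $\{u,v,w\}$, so $\inccl(A_0) = A_0$, and therefore $\cl(A_0) = \inccl(\dchcl(A_0)) = A_0$.

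It remains to translate $A_0$ into $\finalX$ without destroying closedness. First I would check $A_0 \subseteq \secondX$: fix $E \in \mathcal{F}$, so $x(E) \leqslant 1$ or $x(E) \geqslant 0$ defines a facet of $P_0$ with $E \subseteq \{2,\dots,d\}$; by Lemma~\ref{lem:embeddings} and the $0/1$-normalization of slacks, its unique extension to a facet of $P$ has the form $1 - x(E) + \beta x_1 \geqslant 0$ (resp.\ $x(E) + \beta x_1 \geqslant 0$) with $\beta \in \{-1,0\}$ forced by evaluation at $e_1$; evaluating at $u \in A_0$, where $u_1 = 1$, and using that the slack is $0$ or $1$, shows $u(E) \in \{-1,0,1\}$, and in fact $\{u(E) : u \in A_0\}$ is a pair of consecutive integers. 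Now set $a^*$ to be the $\altlexeq$-minimum of $A_0$ and $A \coloneqq A_0 + e_1 - a^*$. By Lemma~\ref{lem:tile_tra}, $A$ lies in a tile; by Lemma~\ref{lem:final_X}, $A \subseteq \{x \in \firstX : e_1 \altlexeq x\}$; and for $E \in \mathcal{F}$, $(u + e_1 - a^*)(E) = u(E) - a^*(E) \in \{-1,0,1\}$ by the preceding remark, so $A \subseteq \secondX$. Hence $A \subseteq \finalX$, and $e_1 = a^* + e_1 - a^* \in A$. By Lemmas~\ref{lem:dch_inv_tra} and~\ref{lem:inc_inv_tra}, which apply because $A_0$ lies in a tile, is closed for both $\dchcl$ and $\inccl$, and contains $a^*$, the set $A$ is again closed for $\cl$. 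Finally, the linear map $x \mapsto x + x_1(e_1 - a^*)$ is unimodular, fixes $\{x_1 = 0\}$ pointwise, and translates $\{x_1 = 1\}$ by $e_1 - a^*$, so it carries $P = \conv(\vertex(P_0) \cup A_0)$ to $\conv(\vertex(P_0) \cup A)$; this is exactly the change of coordinates from the $\mathcal{H}$-embedding of $P$ with respect to $\Gamma$ to the $\mathcal{H}$-embedding of $P$ with respect to the simplicial core extending $\Gamma_0$ whose first vertex is the vertex of $P$ located at $a^*$. Thus $\conv(\vertex(P_0) \cup A)$ is an $\mathcal{H}$-embedding of $P$, which establishes the second condition and finishes the proof.

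I expect the main obstacle to be the incompatibility argument in the second step: checking that the two parallel hyperplanes arising from $F$ are not $\{x_1 = 0\}$ and $\{x_1 = 1\}$ is precisely what converts the covering of the four translates into a contradiction, and it relies on $F$ being a genuine facet of $P$ distinct from $P_0$ that still meets $\{x_1 = 0\}$. A second delicate point will be the closing claim that the translated convex hull is literally an $\mathcal{H}$-embedding of $P$ with respect to some (extending) simplicial core, rather than merely affinely isomorphic to $P$, together with the small computation pinning down the shape $1 - x(E) + \beta x_1$ of the extended facet from Lemma~\ref{lem:embeddings} and the $0/1$ normalization.
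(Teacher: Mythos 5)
Your proof is correct and follows essentially the same route as the paper's: decompose $\vertex(P)=\vertex(P_0)\cup A_0$ in an $\mathcal{H}$-embedding extending $\Gamma_0$, use Lemma~\ref{lem:tiling} to place $A_0$ in a tile, verify closedness under $\dchcl$ and $\inccl$ from $2$-levelness, then translate via Lemmas~\ref{lem:final_X}, \ref{lem:dch_inv_tra} and~\ref{lem:inc_inv_tra}. The only difference is that you spell out two steps the paper leaves implicit (why $2$-levelness excludes incompatible triples, and why the translated set is literally an $\mathcal{H}$-embedding of $P$ rather than merely affinely isomorphic to it), and both of these elaborations are sound.
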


\begin{proof}

Let $\mathcal{A}$ denote the collection of closed sets of $\cl$. It suffices to show that, for each $2$-level $d$-polytope $P$ having a facet isomorphic to $P_0$, there exists some closed set $A \subseteq \mathcal{X}$ such that $P$ is isomorphic to $\conv(\vertex(P_0) \cup A)$.

Let $P$ be some $2$-level $d$-polytope having a facet isomorphic to $P_0$. For the sake of simplicity, in order to avoid explicitly using the isomorphism between $P_0$ and the facet of $P$ isomorphic to $P_0$, we assume that $P_0$ is a facet of $P$. As was discussed earlier in Section~\ref{sec:enum-algorithm}, there exists some simplicial core $\Gamma$ of $P$ such that $\Gamma$ extends the simplicial core $\Gamma_0$ and its embedding transformation matrix $M_d= M_d(c)$ extends $M_{d-1}$ according to the identity \eqref{eq:M_d} for some $c \in \{0,1\}^{d-1}$ 

Consider the $\mathcal{H}$-embedding of $P$ defined by $\Gamma$. In order to simplify notation, we assume that $P$ coincides with this $\mathcal{H}$-embedding. Let $A$ denote the vertex set of the face of $P$ opposite to $P_0$. In other words, $A = \vertex(P) \setminus \vertex(P_0)$. By Corollary \ref{lem:possible_vertices}, we can assume that $A$ is a subset of $M_d(c)^{-1} \cdot (\{1\} \times \{0,1\}^{d-1})$. By Lemma \ref{lem:tiling}, the latter set is simply the tile $T(c)$. Thus $A$ is contained in a tile.

By Lemma \ref{lem:embeddings}, the $\mathcal{H}$-embedded $2$-level polytope $P$ is the intersection of slabs of the form $S(E)$ for some nonempty $E \subseteq [d]$. By Corollary~\ref{lem:possible_vertices}, every point in $\dchcl(A)$ is a vertex of $P$ belonging to $\{1\} \times \R^{d-1}$. This implies that $\dchcl(A) \subseteq A$ and thus $\dchcl(A) = A$. 

Since $P$ is $2$-level, $\vertex(P_0) \cup A$ cannot contain any incompatible triple. Hence, $\inccl(A) = A$ and in particular, $A \subseteq \secondX$, see the discussion in Section \ref{sec:incompatibilities_reduction}. 

Summarizing what we proved so far: $A$ is a closed set of $\cl$ that is contained in the tile $T(c)$. By Lemma~\ref{lem:final_X}, there exists $a^* \in A$ such that $A^* \coloneqq A + e_1 - a^*$ is contained in $\{x \in \firstX \mid e_1 \altlexeq x\}$. By Lemmas~\ref{lem:dch_inv_tra} and \ref{lem:inc_inv_tra}, the set $A^*$ is closed for both $\dchcl$ and $\inccl$. In particular, $A^*$ is also contained in $\secondX$. Therefore, $A^*$ is a closed set of $\cl$ that is contained in $\finalX$. To finish, observe that $P$ is isomorphic to $\conv(\vertex(P_0) \cup A^\ast)$.
\end{proof}

Notice that the family of closed sets for $\cl$ always includes $\firstX$ itself, that clearly does not correspond to a $2$-level polytope. We point out that, for reasons of efficiency, it is desirable to \emph{restrict} the operator $\cl$ to the smaller ground set $\mathcal{X}$, instead of working with $\cl$ as an operator on $\firstX$ and filter out closed sets which are not contained in $\finalX$.

\begin{ex}\label{ex:complete_families}
Figure \ref{fig:complete_families_a} represents the collection of all closed sets of the closure operator $\cl$ contained in $\mathcal{X} = \mathcal{X}(P_0,\Gamma_0)$ when $P_0$ is the $2$-simplex. The six sets in Figure~\ref{fig:complete_families_a} yield four nonisomorphic $2$-level polytopes, namely: the simplex, the square based pyramid, the triangular prism and the octahedron. 

Similarly, Figure~\ref{fig:complete_families_b} represents the collection of all closed sets of the closure operator $\cl$ contained in $\mathcal{X} = \mathcal{X}(P_0,\Gamma_0)$ when $P_0$ is the $2$-cube. The five sets depicted in Figure~\ref{fig:complete_families_b} correspond to the square based pyramid, the triangular prism, the $3$-cube, the $3$-cube minus one vertex. The latter is not a $2$-level polytope, the remaining ones are.

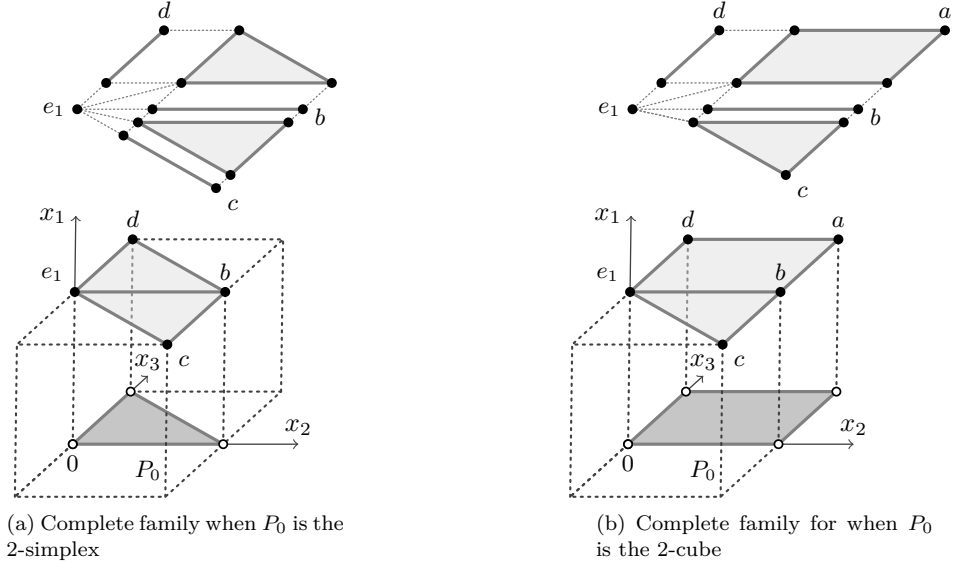
\begin{figure}[ht!]\centering
\subfloat[Complete family when $P_0$ is the $2$-simplex]{\label{fig:complete_families_a}\parbox[c][6.8cm][c]{.45\textwidth}{\centering
\begin{tikzpicture}[line join=round,line cap=round,rotate around y=90,rotate around z=-2,scale=2]
\tikzset{vertex/.style={draw,fill,circle,inner sep=1.1pt,minimum width=1pt},
frame/.style={line width=.8pt,draw=darkgray,dotted},
exploded/.style={line width=.5pt,draw=gray,densely dotted},
axes/.style={line width=.5pt,draw=darkgray,->}}

\draw[axes](1,0,0)--(1.3,0,0) node[above,yshift=-.5mm] at (1.3,0,0){$x_3$};
\draw[axes](0,0,1)--(0,0,1.5) node[above] at (0,0,1.5){$x_2$};
\draw[axes](0,1,0)--(0,1.5,0) node[left] at (0,1.5,0){$x_1$};

%\draw[dotted,line width=1.7pt](1.4,0,-.4)--(-.4,0,1.4)node [below] at (-.4,0,1.4){$\mathrm{aff}(F_0)$};

\draw[line width=1.2pt,gray,fill=darkgray!30](0,0,0)--(1,0,0)--(0,0,1)-- cycle;
\node [vertex,fill=white] at (0,0,0) {};
\node[label={below:$P_0$}] at (0,0,.5) {};

\draw[frame](1,0,0)--(1,1,0);

\draw[line width=1.2pt,gray,fill=lightgray!50,fill opacity=.5](1,1,0)--(0,1,1)--(-1,1,1)--(0,1,0)--cycle;

%\draw[line width=1.2pt,gray](-1,1,1)--(1,1,-1);
\draw[line width=1.2pt,gray](0,1,0)--(0,1,1);

\draw[frame](0,0,1)--(0,1,1);
\draw[frame](1,0,1)--(1,1,1);
%\draw[frame](1,0,-1)--(1,1,-1);
\draw[frame](0,0,0)--(0,1,0);
%\draw[frame](0,0,-1)--(0,1,-1);

\draw[frame](-1,0,0)--(-1,1,0);
\draw[frame](-1,0,1)--(-1,1,1);
%\draw[frame](-1,0,-1)--(-1,1,-1);

\draw[frame](0,0,0)--(-1,0,0);

%\draw[frame](0,1,-1)--(0,1,1);
%\draw[frame](-1,1,0)--(1,1,0);

\draw[frame](0,1,1)--(1,1,1)--(1,1,0);
\draw[frame](0,1,0)--(-1,1,0)--(-1,1,1);

%\draw[frame](0,1,-1)--(0,1,0)--(-1,1,0);

\draw[exploded](-.5,2.2,.5)--(.5,2.2,.5)--(.5,2.2,0);
\draw[exploded](.5,2.2,0)--(0,2.2,0)--(0,2.2,.5);
\draw[exploded](.5,2.2,.5)--(0,2.2,0)--(-.5,2.2,.5);
\draw[exploded](0,2.2,0)--(-.25,2.2,.5);
\draw[exploded](1.5,2.2,.5)--(1.5,2.2,0);
\draw[exploded](.5,2.2,1.5)--(-1.5,2.2,1.5);

\node [vertex,label={left:$e_1$}] at (0,2.2,0) {};

\draw[line width=1.2pt,gray](.5,2.2,0)--(1.5,2.2,0);
\node [vertex] at (.5,2.2,0) {};
\node [vertex,label={above:$d$}] at (1.5,2.2,0) {};

\draw[line width=1.2pt,gray,fill=lightgray!50,fill opacity=.5](.5,2.2,.5)--(1.5,2.2,.5) --(.5,2.2,1.5)-- cycle;

\node [vertex] at (.5,2.2,.5) {};
\node [vertex] at (1.5,2.2,.5) {};
\node [vertex] at (.5,2.2,1.5) {};

\draw[line width=1.2pt,gray](0,2.2,.5)--(0,2.2,1.5);
\node [vertex] at (0,2.2,.5) {};
\node [vertex,label={[yshift=-1mm]right:$b$}] at (0,2.2,1.5) {};

\draw[line width=1.2pt,gray](-.5,2.2,.5)--(-1.5,2.2,1.5);

\draw[line width=1.2pt,gray,fill=lightgray!50,fill opacity=.5](-.25,2.2,.5)--(-.25,2.2,1.5) --(-1.25,2.2,1.5)-- cycle;

\node [vertex] at (-.25,2.2,.5) {};
\node [vertex] at (-.25,2.2,1.5) {};
\node [vertex] at (-1.25,2.2,1.5) {};

\node [vertex] at (-.5,2.2,.5) {};
\node [vertex,label={below right:$c$}] at (-1.5,2.2,1.5) {};

\draw[frame](0,0,0)--(-1,0,0)--(-1,0,1)--(1,0,1)--(1,0,0);

\node [vertex,fill=white] at (0,0,1) {};
\node [vertex,fill=white,label={below:$0$}] at (0,0,0) {};
\node [vertex,fill=white] at (1,0,0) {};

%\node[vertex,label={above:$a$}] at(1,1,1) {};
\node[vertex,label={above:$b$}] at(0,1,1) {};
\node[vertex,label={below right:$c$}] at(-1,1,1) {};

\node[vertex,label={above:$d$}] at(1,1,0) {};
\node[vertex,label={above left:$e_1$}] at(0,1,0) {};

\end{tikzpicture}
}}\hspace{2em}
\subfloat[Complete family for when $P_0$ is the $2$-cube]{\label{fig:complete_families_b}\parbox[c][6.8cm][c]{.45\textwidth}{\centering
\begin{tikzpicture}[line join=round,line cap=round,rotate around y=90,rotate around z=-2,scale=2]
\tikzset{vertex/.style={draw,fill,circle,inner sep=1.1pt,minimum width=1pt},
frame/.style={line width=.8pt,draw=darkgray,dotted},
exploded/.style={line width=.5pt,draw=gray,densely dotted},
axes/.style={line width=.5pt,draw=darkgray,->}}

\draw[axes](1,0,0)--(1.3,0,0) node[above,yshift=-.5mm] at (1.3,0,0){$x_3$};
\draw[axes](0,0,1)--(0,0,1.5) node[above] at (0,0,1.5){$x_2$};
\draw[axes](0,1,0)--(0,1.5,0) node[left] at (0,1.5,0){$x_1$};

%\draw[dotted,line width=1.7pt](1.4,0,-.4)--(-.4,0,1.4)node [below] at (-.4,0,1.4){$\mathrm{aff}(F_0)$};

\draw[line width=1.2pt,gray,fill=darkgray!30](0,0,0)--(1,0,0)--(1,0,1)--(0,0,1)-- cycle;
\node [vertex,fill=white] at (0,0,0) {};
\node[label={below:$P_0$}] at (0,0,.5) {};

\draw[frame](1,0,0)--(1,1,0);

\draw[line width=1.2pt,gray,fill=lightgray!50,fill opacity=.5](1,1,0)--(1,1,1)--(-1,1,1)--(0,1,0)--cycle;

%\draw[line width=1.2pt,gray](-1,1,1)--(1,1,-1);
\draw[line width=1.2pt,gray](0,1,0)--(0,1,1);

\draw[frame](0,0,1)--(0,1,1);
\draw[frame](1,0,1)--(1,1,1);
%\draw[frame](1,0,-1)--(1,1,-1);
\draw[frame](0,0,0)--(0,1,0);
%\draw[frame](0,0,-1)--(0,1,-1);

\draw[frame](-1,0,0)--(-1,1,0);
\draw[frame](-1,0,1)--(-1,1,1);
%\draw[frame](-1,0,-1)--(-1,1,-1);

\draw[frame](0,0,0)--(-1,0,0);

%\draw[frame](0,1,-1)--(0,1,1);
%\draw[frame](-1,1,0)--(1,1,0);

%\draw[frame](0,1,1)--(1,1,1)--(1,1,0);
\draw[frame](0,1,0)--(-1,1,0)--(-1,1,1);

%\draw[frame](0,1,-1)--(0,1,0)--(-1,1,0);

\draw[exploded](-.25,2.2,.5)--(.5,2.2,.5)--(.5,2.2,0);
\draw[exploded](.5,2.2,0)--(0,2.2,0)--(0,2.2,.5);
\draw[exploded](.5,2.2,.5)--(0,2.2,0)--(-.25,2.2,.5);
\draw[exploded](0,2.2,0)--(-.25,2.2,.5);
\draw[exploded](1.5,2.2,.5)--(1.5,2.2,0);
\draw[exploded](.5,2.2,1.5)--(-1.25,2.2,1.5);

\node [vertex,label={left:$e_1$}] at (0,2.2,0) {};

\draw[line width=1.2pt,gray](.5,2.2,0)--(1.5,2.2,0);
\node [vertex] at (.5,2.2,0) {};
\node [vertex,label={above:$d$}] at (1.5,2.2,0) {};

\draw[line width=1.2pt,gray,fill=lightgray!50,fill opacity=.5](.5,2.2,.5)--(1.5,2.2,.5) --(1.5,2.2,1.5)--(.5,2.2,1.5)-- cycle;

\node [vertex] at (.5,2.2,.5) {};
\node [vertex] at (1.5,2.2,.5) {};
\node [vertex] at (.5,2.2,1.5) {};
\node [vertex,label={above:$a$}] at (1.5,2.2,1.5) {};

\draw[line width=1.2pt,gray](0,2.2,.5)--(0,2.2,1.5);
\node [vertex] at (0,2.2,.5) {};
\node [vertex,label={[yshift=-1mm]right:$b$}] at (0,2.2,1.5) {};

%\draw[line width=1.2pt,gray](-.5,2.2,.5)--(-1.5,2.2,1.5);

\draw[line width=1.2pt,gray,fill=lightgray!50,fill opacity=.5](-.25,2.2,.5)--(-.25,2.2,1.5) --(-1.25,2.2,1.5)-- cycle;

\node [vertex] at (-.25,2.2,.5) {};
\node [vertex] at (-.25,2.2,1.5) {};
\node [vertex,label={below right:$c$}] at (-1.25,2.2,1.5) {};

%\node [vertex] at (-.5,2.2,.5) {};
%\node [vertex,label={below right:$c$}] at (-1.5,2.2,1.5) {};

\draw[frame](0,0,0)--(-1,0,0)--(-1,0,1)--(0,0,1);

\node [vertex,fill=white] at (0,0,1) {};
\node [vertex,fill=white] at (1,0,1) {};

\node [vertex,fill=white,label={below:$0$}] at (0,0,0) {};
\node [vertex,fill=white] at (1,0,0) {};

\node[vertex,label={above:$a$}] at(1,1,1) {};
\node[vertex,label={above:$b$}] at(0,1,1) {};
\node[vertex,label={below right:$c$}] at(-1,1,1) {};

\node[vertex,label={above:$d$}] at(1,1,0) {};
\node[vertex,label={above left:$e_1$}] at(0,1,0) {};

\end{tikzpicture}
}}

\caption{Exploded view of the collection of the closed sets of $\cl$ included in $\mathcal{X}$, with respect to two different bases $P_0$. In the upper part of the figure, all points joined by dotted lines are identified.}
\label{fig:complete_families}
\end{figure}
\end{ex}

%%%
%%% IMPLEMENTATION AND EXPERIMENTAL RESULTS
%%%

\section{Implementation and experimental results}
\label{sec:exp_impl}

\subsection{Implementation}
\label{sec:impl}

We implemented Algorithm~\ref{algo:enumeration} in \texttt{C++}, using the Boost Dynamic Bitset library \cite{boost_dynamic_bitset} for set manipulations, and the Boost \texttt{uBLAS} library \cite{boost_ublas} for basic linear algebra computations. Besides this, our implementation heavily relies on the \texttt{C} library \texttt{nauty} \cite{McKay2014}. We use \texttt{nauty} for rejecting every $d$-polytope $P = P(A) \coloneqq \conv(\vertex(P_0) \cup A)$ that is isomorphic to some already computed $2$-level $d$-polytope $P' \in L_{d}$, and also to test whether a given $d$-polytope $P(A)$ is $2$-level. We provide more detail about the implementation below.

\paragraph{Storing and comparing $2$-level polytopes.} As mentioned before, $2$-level polytopes $P$ are stored via their 0/1 slack matrices $S(P)$. We order the rows and columns of $S(P)$ in such a way that the upper left corner of the matrix is the preferred simplicial core. Let us call two 0/1 matrices $M_1$ and $M_2$ \emph{isomorphic} if the rows and columns of $M_1$ can be permuted to give $M_2$, that is, there exist permutation matrices $L$ and $R$ such that $L M_1 R = M_2$. In order to detect isomorphism between two $2$-level polytopes $P_1$ and $P_2$, we test whether their slack matrices $S(P_1)$ and $S(P_2)$ are isomorphic. Notice that if we have \emph{any} 0/1 matrix $M$, we can check whether it is the slack matrix of a $2$-level $(d-1)$-polytope by comparing it to each $S(P_0)$, for $P_0 \in L_{d-1}$. Similarly, we can check whether $M$ is the slack matrix of an already enumerated $2$-level $d$-polytope by using $L_d$ instead. 

Isomorphism tests of two 0/1 matrices can be efficiently performed by \texttt{nauty}. We represent each 0/1 matrix $M \in \{0,1\}^{m \times n}$ by a bipartite graph $G = G(M)$ with $m + n$ vertices, together with a $2$-coloring of its vertex set, in the obvious way. For instance, we may let $V(G) \coloneqq (\{0\} \times [m]) \cup (\{1\} \times [n])$ and $E(G) \coloneqq \{ \{(0,i),(1,j)\} \mid M_{ij} = 1 \}$. The $2$-coloring is then $\phi : V(G) \to \{0,1\} : (c,k) \mapsto c$. Now, two 0/1 matrices $M_1$ and $M_2$ are isomorphic if and only if the colored graphs $G(M_1)$ and $G(M_2)$ are isomorphic, which can be tested by \texttt{nauty}.

\paragraph{Testing for $2$-levelness.} Now we describe how, for a given $A \subseteq \finalX$, we check whether $P \coloneqq P(A)$ is a $2$-level polytope or not. Intuitively, we build a 0/1 matrix $M$ which is the slack matrix of $P$, provided that $P$ is $2$-level. For each row of this matrix $M$, we extract one submatrix of $M$, which is the slack matrix of the corresponding facet of $P$, provided that $P$ is $2$-level. Then we check that each one of these submatrices is the slack matrix of a $2$-level $(d-1)$-polytope, using $L_{d-1}$. We give a formal description in the next paragraphs. 

First, we need to recall the general notion of slack matrix of a pair polytope-polyhedron, the first nested into the second (first defined in \cite{Pashkovich12,Gillis12}).

\begin{defn}[Reduced slack matrix of a pair]
Let $P$ be a polytope and $Q$ be a polyhedron with $P \subseteq Q \subseteq \R^d$. Consider an inner description $P = \conv(\{v_1, \dots, v_n\})$ and an outer description $Q = \{x \in \R^d \mid A x \leqslant b\}$, where the system $Ax \leqslant b$ consists of the $m$ inequalities $A_1 x \leqslant b_1, \dotsc, A_m x \leqslant b_m$. 
The \emph{slack matrix} of the pair $(P,Q)$ with respect to these inner and outer descriptions is the $m \times n$ matrix $S = S(P,Q)$ with $S_{ij} \coloneqq b_i - A_i v_j$. The matrix obtained from $S$ by removing the rows whose support contains the support of some other row is called \emph{reduced slack matrix} of the pair, and denoted by $S_{\mathrm{red}}(P,Q)$. 
\end{defn}

Given $A \subseteq \finalX$, we define $P$ as before and let $Q$ be the polyhedron defined by the inequalities $x(E) \geqslant 0$ and $x(E) \leqslant 1$ for $E \in \mathcal{E}(\vertex(P_0) \cup A)$. 
Let $M = M(A) \coloneqq S_{\mathrm{red}}(P,Q)$. Observe that $M$ is the slack matrix of a polytope if and only if $P = Q$. By construction, $M$ is a $0/1$ matrix. By Lemma~\ref{lem:embeddings}, $P$ is $2$-level 
iff $M$ is the slack matrix of a polytope. Thus we can reduce testing for $2$-levelness to testing whether a $0/1$ matrix is a slack matrix. This is in fact a particular case of the problem of recognizing under what assumptions a matrix is the slack matrix of a polytope, see \cite{Gouveia13}.

We do this using a non-recursive method inspired by the recursive facet system verification algorithm described below, see Algorithm~\ref{algo:verif}. In the algorithm, we write $i \sim j$ for distinct indices $i, j \in [q]$ if there is no index $k \in [q]$ distinct from $i$ and $j$ such that $F_i \cap F_j \subseteq F_k$.

\begin{algorithm}[ht!]
	\SetKw{Return}{\bf return}
    \SetKwInOut{Input}{\sc Input}
    \SetKwInOut{Output}{\sc Output}
    \Input{some polytope $P$ with $\dim(P) \geqslant 1$ and an antichain $\{F_1,\ldots,F_q\}$ of nonempty proper faces of $P$}
    \Output{`accept' if $\{F_1,\ldots,F_q\}$ is the collection of all facets of $P$, `reject' otherwise.}
  \SetNlSty{textmd}{}{}
  \If{$\dim(P) = 1$}{
    \If{$q = 2$}{
      \Return `accept'
    }
    \Return `reject'
  }
  \If{$q < \dim(P) + 1$}{
    \Return `reject'
  }
  \For{$i = 1, \ldots, q$}{
    \If{$\dim(F_i) < \dim(P) - 1$ or {\tt fsv}$(F_i;\{F_i \cap F_j \mid i \sim j\})$ = `reject'}{
      \Return `reject'
    }
  }
\Return `accept' 
\caption{Facet system verification algorithm \texttt{fsv}$(P;\{F_1,\ldots,F_q\})$}\label{algo:verif}
\end{algorithm}

Before describing our method, we establish the correctness of Algorithm~\ref{algo:verif}. Since we use this algorithm solely as a tool to establish the correctness of our method for testing whether a $0/1$-matrix is a slack matrix, it is not crucial to specify how the polytope $P$ and the collection of faces $\{F_1,\ldots,F_q\}$ are passed to the algorithm. However, for concreteness, we may assume that $P$ is given by the set $\vertex(P) \subseteq \R^d$ of its vertices\footnote{We remark that we could have modified Algorithm~\ref{algo:verif} so that only the number of vertices and ``target dimension'' of $P$ are passed together with some abstract antichain $\{F_1,\ldots,F_q\}$ such that $|\cup_i F_i| \leqslant n$, instead of the whole vertex set realized in some $\R^d$. However, we refrained from doing this since it would make the proof of Lemma~\ref{lem:fsv} longer.} and each face $F_i$ is passed as a subset of $\vertex(P)$.

\begin{lem} \label{lem:fsv}
Given a polytope $P$ with $\dim(P) \geqslant 1$ and
a collection $\{F_1,\ldots,F_q\}$ of nonempty proper faces
of $P$, no two comparable for inclusion, Algorithm~\ref{algo:verif} correctly detects if $\{F_1,\ldots,F_q\}$ is the collection of all the facets of $P$.
\end{lem}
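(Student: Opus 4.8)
The plan is to prove the statement by induction on $d := \dim(P)$, showing in one stroke that Algorithm~\ref{algo:verif} returns \textsc{accept} precisely when $\{F_1,\ldots,F_q\}$ is the set of all facets of $P$. Before the induction I would record a housekeeping fact that is also needed later: every recursive call $\texttt{fsv}(F_i;\{F_i\cap F_j \mid i \sim j\})$ is fed a legitimate input. Indeed, the recursive call is evaluated only when $\dim(F_i) \geqslant \dim(P)-1$, hence $\dim(F_i) = \dim(P)-1 \geqslant 1$ since $F_i$ is a proper face; also $q \geqslant \dim(P)+1 \geqslant 3$ because the test $q < \dim(P)+1$ was passed; each $F_i \cap F_j$ is a face of $P$ contained in $F_i$, hence a face of $F_i$, which is proper because $F_i,F_j$ are incomparable and nonempty because $F_i\cap F_j = \varnothing$ would force $F_i\cap F_j \subseteq F_k$ for any $k \notin\{i,j\}$ (such $k$ exists as $q\geqslant 3$), contradicting $i\sim j$; and $\{F_i\cap F_j \mid i\sim j\}$ is an antichain, since $F_i\cap F_j \subseteq F_i\cap F_{j'}$ with $j\neq j'$ would give $F_i\cap F_j \subseteq F_{j'}$ with $j'\notin\{i,j\}$, again contradicting $i\sim j$. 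The base case $d=1$ is immediate: the nonempty proper faces of a segment are its two endpoints, which are exactly its two facets, so the antichain $\{F_1,\ldots,F_q\}$ equals the facet set if and only if $q=2$, which is what the algorithm tests.

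For the inductive step $d\geqslant 2$, the heart of the matter is the following claim: \emph{if $\{F_1,\ldots,F_q\}$ is the set of all facets of $P$, then for every $i$ the collection $\{F_i\cap F_j \mid i\sim j\}$ is exactly the set of all facets of $F_i$.} I would prove this using two standard facts about polytopes: the facets of the facet $F_i$ are precisely the ridges (the $(d-2)$-faces) of $P$ lying in $F_i$, and every ridge of $P$ is contained in exactly two facets of $P$ (the diamond property). For one inclusion: if $R$ is a facet of $F_i$, it is a ridge of $P$, so writing $F_i,F_j$ for the two facets of $P$ containing $R$ we get $R \subseteq F_i\cap F_j \subsetneq F_i$ and $\dim R = d-2 = \dim(F_i\cap F_j)$, whence $R = F_i\cap F_j$, and $i\sim j$ holds because no third facet contains $R$. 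For the reverse inclusion: if $i\sim j$, then $F_i\cap F_j$ is a nonempty proper face of $F_i$, and were it not a facet of $F_i$ it would lie strictly inside some facet $R'$ of $F_i$; by the first inclusion $R' = F_i\cap F_k$ with $k\neq i$, and here $k\neq j$ (otherwise $R' = F_i\cap F_j$, not strictly larger), so $F_i\cap F_j\subsetneq F_k$ with $k\notin\{i,j\}$, contradicting $i\sim j$.

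Granting the claim, both directions of the equivalence follow quickly. If $\{F_1,\ldots,F_q\}$ is the facet set of $P$, then $q\geqslant d+1$, each $\dim(F_i)=d-1$, and by the claim and the induction hypothesis every recursive call returns \textsc{accept}, so the algorithm does too. Conversely, suppose the algorithm returns \textsc{accept}; passing the dimension test forces each $F_i$ to be a facet of $P$, and the induction hypothesis applied to the accepting recursive calls gives that $\{F_i\cap F_j \mid i\sim j\}$ is the facet set of $F_i$, i.e.\ equals the set of ridges of $P$ inside $F_i$. If some facet of $P$ were missing from the list, then, since the facet-adjacency graph of $P$ (facets adjacent when they share a ridge) is connected --- it is the graph of the dual polytope of $P$ --- there would be a ridge $R$ of $P$ contained in a listed facet $F_i$ and in an unlisted facet $F$; but then $R = F_i\cap F_j$ for some $j$ with $i\sim j$, so $R\subseteq F_j$ with $F_j$ listed and $F_j\neq F_i$, and as $R$ lies in exactly the two facets $F_i$ and $F$ of $P$ this would force $F_j = F$, a contradiction. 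Hence $\{F_1,\ldots,F_q\}$ is exactly the set of facets of $P$.

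I expect the main obstacle to be the claim itself: translating the purely combinatorial relation $i\sim j$ into the geometric assertion ``$F_i\cap F_j$ is a facet of $F_i$'' requires invoking both the diamond property and the description of faces of faces, and one has to keep the quantifiers straight in both inclusions. The ``no missing facet'' step, which relies on the connectedness of the facet-adjacency graph, is the other point where care is needed; everything else is bookkeeping.
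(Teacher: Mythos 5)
Your proof is correct and follows essentially the same route as the paper's: induction on dimension, with the key claim that when $\{F_1,\ldots,F_q\}$ is the facet set of $P$, the recursive input $\{F_i\cap F_j\mid i\sim j\}$ is exactly the facet set of $F_i$ (via the diamond property), and an adjacency argument for the reverse direction. The only difference is in level of detail: you spell out the well-formedness of the recursive calls and explicitly invoke connectedness of the facet-adjacency graph, both of which the paper leaves implicit.
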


\begin{proof}
First, assume that $\{F_1,\ldots,F_q\}$ is the collection of all the facets of $P$. We always have $q \geqslant \dim(P) + 1$. If $\dim(P) = 1$, then $q = 2$ and the algorithm correctly 
accepts. Assume now $\dim(P) \geqslant 2$. Then for each fixed $i \in [q]$, $\{F_i \cap F_j \mid i \sim j\}$ is the collection of all the facets of $F_i$ (this follows from the diamond property, see Ziegler~\cite{Ziegler95}). The result follows by induction on the dimension of $P$.

Second, assume that $\{F_1,\ldots,F_q\}$ is not the collection of all the facets of $P$. If $q < \dim(P) + 1$ then the algorithm correctly rejects. Hence may assume that $q \geqslant \dim(P) + 1 \geqslant 3$. We may also assume that every $F_i$ is a facet of $P$, otherwise the algorithm would detect this and reject. Notice that for each fixed $i$, $\{F_i \cap F_j \mid i \sim j\}$ is an antichain of proper nonempty faces of $F_i$. Now pick $i$ such that some facet $F$ of $P$ \emph{adjacent} to $F_i$ is missing from $\{F_1,\ldots,F_q\}$. Thus $F_i \cap F$ is a facet of $F_i$, not contained in any $F_j$ for $j \neq i$. Hence, $\{F_i \cap F_j \mid i \sim j\}$ is missing a facet of $F_i$, namely, $F_i \cap F$. Again, we can induct on dimension to conclude the proof.
\end{proof}

Our method for determining whether the 0/1 matrix $M = M(A)$ is the slack matrix of a $d$-polytope is similar to Algorithm~%
\ref{algo:verif}, the input being the collection of faces $F_i \coloneqq \conv(\{v_j \mid M_{ij} = 0\})$. Instead of performing any recursive call, the method directly checks that 
$\{F_i \cap F_j \mid i \sim j\}$ is the collection of all facets of $F_i$, for all $i \in [q]$. This is done by computing the matrix of non-incidences between the faces $F_j$ with 
$j \sim i$ and the vertices of $F_i$, and testing whether some matrix isomorphic to that matrix can be found in $L_{d-1}$. 

If $M$ is a slack matrix, then this test will accept for all choices of $i$. If $M$ is not a slack matrix, then this test cannot accept for all choices of $i$, because otherwise 
Algorithm~\ref{algo:verif} would also have accepted for all choices of $i$. This would imply that $M$ is a slack matrix, by Lemma~\ref{lem:fsv}.

We point out that this test only uses combinatorial information that can be found in the non-incidence matrix $M$.
Therefore there is no need to explicitly compute the convex hull of $P_0$ and $A$ in order to determine whether $P = P(A)$ is $2$-level. This improves the Algorithm presented in~\cite{Bohn15} and \cite{Fiorini16}. See Section~\ref{sec:exp_results} for the elapsed times of the new algorithm. 

\paragraph{Generating the sets of the complete family.} We implement Ganter's Next-Closure algorithm (see, e.g., \cite{GanterWille}), which we use to enumerate all closed sets of the restriction of $\cl$, see~\eqref{eq:final_cl}, to the ground set $\finalX \subseteq \firstX$, see~\eqref{eq:final_X}. The Next-Closure algorithm generates all the closed sets one after the other in the lexicographic order, starting with the closure of the empty set, which is $\cl(\varnothing) = \{e_1\}$ in our case, and ending with $\mathcal{X}$. To find the closed set that comes right after the current closed set $A$, the Next-Closure algorithm computes at most $|\finalX \setminus A|$ closures. Notice that each time we compute the discrete convex hull closure of a set $B \subseteq \finalX$, we may record the corresponding set $\mathcal{E}(\vertex(P_0) \cup B)$, since this is information that is useful for the $2$-levelness test.

\paragraph{Further optimizations.} We discard the candidate set $A$ if the maximum number of zeros per row of $M = M(A)$ is greater than the number of vertices of the base $P_0$. In this way we avoid adding multiple times different isomorphic copies of the same $2$-level polytope to the list $L_d$. If there exists a facet having more vertices than $P_0$ and it is also $2$-level, the polytope $P$ will be constructed when that facet will be taken as base. In particular, if the base $P_0$ is the simplex, only simplicial polytopes are tested for $2$-levelness.

\begin{ex}
In order to enumerate all the $3$-dimensional $2$-level polytopes, the enumeration algorithm considers all the polytopes constructed using the closed sets in Figure \ref{fig:complete_families_a} when $P_0$ is the $2$-simplex and Figure \ref{fig:complete_families_b} when $P_0$ is the $2$-cube. Obviously some polytopes are computed twice as the base changes, for instance the square-based pyramid and the triangular prism. With the optimization described above, we construct the square base pyramid, or the triangular prism only when we take the $2$-cube as base.
\end{ex}

A final optimization concerns the $2$-levelness test: if some index $i$ is found such that the number of indices $j$ such that $j \sim i$ is less than $d$, then we can safely reject the matrix $M = M(A)$. Indeed, if $M$ was the slack matrix of a $d$-polytope, then every facet $F_i$ would have at least $d$ adjacent facets $F_j$.

\subsection{Experimental results}
\label{sec:exp_results}

As our main experimental result, we obtain a database of all 
$2$-level polytopes of dimension $d \leqslant 7$, up to 
isomorphism\footnote{The complete list of all slack matrices of combinatorial inequivalent $2$-level polytopes up to dimension $7$ is available online at \texttt{\url{http://homepages.ulb.ac.be/~mmacchia/data.html}}}. Table~\ref{tbl:2Lnum} summarizes these results regarding the number of $2$-level polytopes and interesting subclasses. 

We give the number of combinatorial types in the class of polar $2$-level polytopes, those whose polar is $2$-level. In fact, $2$-levelness is in general not preserved under the operation of taking polars, and data show that the fraction of such polytopes rapidly decreases with the dimension. 

A subset of polar $2$-level polytopes is the class of centrally symmetric $2$-level polytopes. From the analysis of the data we noticed that, among all $2$-level polytopes, the centrally symmetric ones maximize the product of number of facets and number of vertices, see Figure \ref{fig:freq_facvert_1}.

Another well known class of $2$-level polytopes are the stable set polytopes of perfect graphs. Lemma \ref{lem:simple_vertex} provides an elementary way to recognize them: they are exactly $2$-level polytopes with a simple vertex.
Table~\ref{tbl:2Lnum} also shows the number of polytopes having a simplicial facet. This is a natural property to consider, being dual to the one of having a simple vertex.

Finally, we list the number of Birkhoff polytopes, for which we refer to~\cite{Paffenholz13}. Birkhoff polytopes are a classical family of $2$-level polytopes, also known as perfect matching polytope of the complete bipartite graph.

\begin{table}[ht!]\centering
\setlength{\tabcolsep}{.9em}
\begin{tabular}{r|r|r|rrrrr}
$d$ & 0/1 & $2$-level & polar & CS & STAB & $\Delta$-f & Birk\\
\hline
3 & 8 & 5 & 4 & 2 & 4 & 4  & 4\\
4 & 192 & 19 & 12 & 4 & 11 & 12 & 11 \\
5 & 1\,048\,576 & 106 & 40 & 13 & 33 & 41 & 33 \\
6 & - & 1\,150 & 262 & 45 & 148 & 248 & 129\\
7 & - & 27\,292 & 3368 & 238 & 906 & 2\,687 & 661\\[2mm]
\end{tabular}
\caption{Comparison of numbers for combinatorially inequivalent $0/1$ polytopes~\cite{Aichholzer00}, $2$-level polytopes and their sub-classes.
polar: $2$-level polytopes whose polar is $2$-level,
CS: centrally symmetric $2$-level polytopes,
$\Delta$-f: $2$-level polytopes with one simplicial facet,
STAB: stable set polytopes of perfect graphs, 
Birk: Birkhoff polytope faces from~\cite{Paffenholz13}.%,
%$f(d)$: maximum number of faces of 2-level polytopes,  
%'-': exact numbers unknown. 
}\label{tbl:2Lnum}
\end{table}

With our latest implementation, the databases for $d \leqslant 6$
were computed in a total time of about $3$ minutes on a computer cluster\footnote{Hydra balanced cluster: \texttt{https://cc.ulb.ac.be/hpc/hydra.php} with \texttt{AMD Opteron(TM) 6134 2.3 GHz} processors.}, which improves the computational times of our previous implementations~%
\cite{Bohn15,Fiorini16}. 
However, we remark that a direct comparison of the running times is not possible because the code presented here is not a secondary implementation of the same one used in \cite{Bohn15,Fiorini16}. We were able to cut down the running time by rewriting it from scratch in \texttt{C++} and using a reduced ground set, a new closure operator (Section \ref{sec:closure}, Section \ref{sec:reduction}) and a new, combinatorial $2$-levelness test (Section~\ref{sec:impl}). 

The $d = 7$ is the first challenging case for our code. We noticed that the time to compute all $2$-level polytopes with a given base $P_0$ is sharply decreasing as a function of the number of vertices of $P_0$, see Figure~\ref{fig:vertices_vs_time}. When $P_0$ is the simplex, the computational time is maximum and close to $\frac{5}{6}$ of the total time for $d = 7$.

\begin{figure}[hb!]\centering
\includegraphics[width=.6\textwidth]{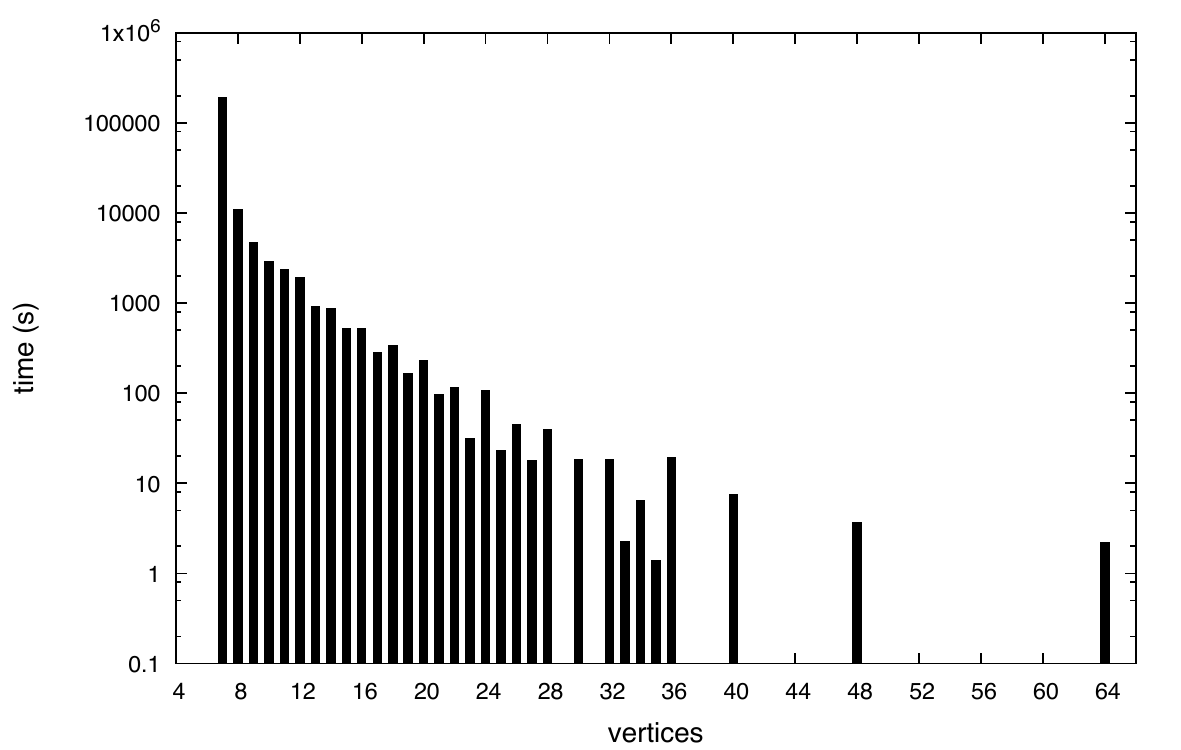}
\caption{Histogram representing the running time for the enumeration of all $2$-level $7$-polytopes with a base $P_0$ as a function of the number of vertices of $P_0$. Each bar represents the sum of the running times for bases corresponding to the same number of vertices.}\label{fig:vertices_vs_time} 
\end{figure}

Recall that our code discards candidate sets that 
give polytopes having a facet with more vertices than 
the prescribed base $P_0$. Thus the code enumerates all 
simplicial $2$-level polytopes when $P_0$ is a simplex.
In fact, it is known that the simplicial $2$-level $d$-polytopes 
are the free sums of $d/k$ simplices of dimension $k$, for $k$
a divisor of $d$~\cite{Grande15}. For instance, for $d=7$ there 
exist exactly two simplicial $2$-level $7$-polytopes: the simplex
(obtained for $k = 1$) and the cross-polytope (obtained for $k = 7$).
%Thus, we could in fact have skipped the job that corresponds to taking a simplex as the base $P_0$.

We split the computation into several independent jobs, each corresponding to a certain set of bases $P_0$. We created jobs testing all closed sets corresponding to only 1 base for the first 100 $2$-level $6$-dimensional bases, corresponding to 5 bases for the bases between the 101st and the 500th, corresponding to 20 bases for the bases between the 501st and the 1000th and to 50 bases for the bases between the 1001st and the 1150th (bases are ordered by increasing number of vertices). In total we submitted 208 jobs to the cluster. All jobs but the one corresponding to the $6$-simplex as base, finished in less that 3 hours. Of these jobs, all but two finished in less than 20 minutes. See Table~\ref{tab:comp_results} for more details about computational times. Notice that we could use the characterization in~\cite{Grande15} and skip the job that corresponds to taking a simplex as the base $P_0$.

The current implementation provided a list of all combinatorial types of $2$-level polytopes up to dimension $7$ in about 53 hours. There might still be ways to further improve it, for instance generalizing the closure operator and reducing the number of times isomorphic copies of the same $2$-level polytope is constructed.

\begin{table}[ht!]\centering
\setlength{\tabcolsep}{.9em}
\begin{tabular}{r|rrrr}
$d$ & $2$-level & closed sets & $2$-level tests & time (sec) \\\hline

4 & 19 & 132 & 45 & 0.034 \\
% $1.3 \cdot 10^2$ $3.4 \cdot 10^{-2}$
5 & 106 & 3\,828 & 456 & 1.2 \\
%$3.8 \cdot 10^3$ 
6 & 1\,150 & 500\,072 & 6\,875 & 205.7 \\
% $5.0 \cdot 10^6$ $2.1 \cdot 10^2$
7 & 27\,292 & 563\,695\,419 & 159\,834 & 218\,397 \\
%  $5.6 \cdot 10^9$ $2.2 \cdot 10^5$ 
\end{tabular}
\caption{Computational results of enumeration algorithm (sequential time).\label{tab:comp_results}}
\end{table}

\subsection{Statistics}

Taking advantage of the data obtained, we computed a number of statistics to understand the structure and properties of $2$-level polytopes.

\begin{figure*}[ht!]\centering
\subfloat[]{\label{fig:freq_facvert_1}
\includegraphics[width=0.4\textwidth]{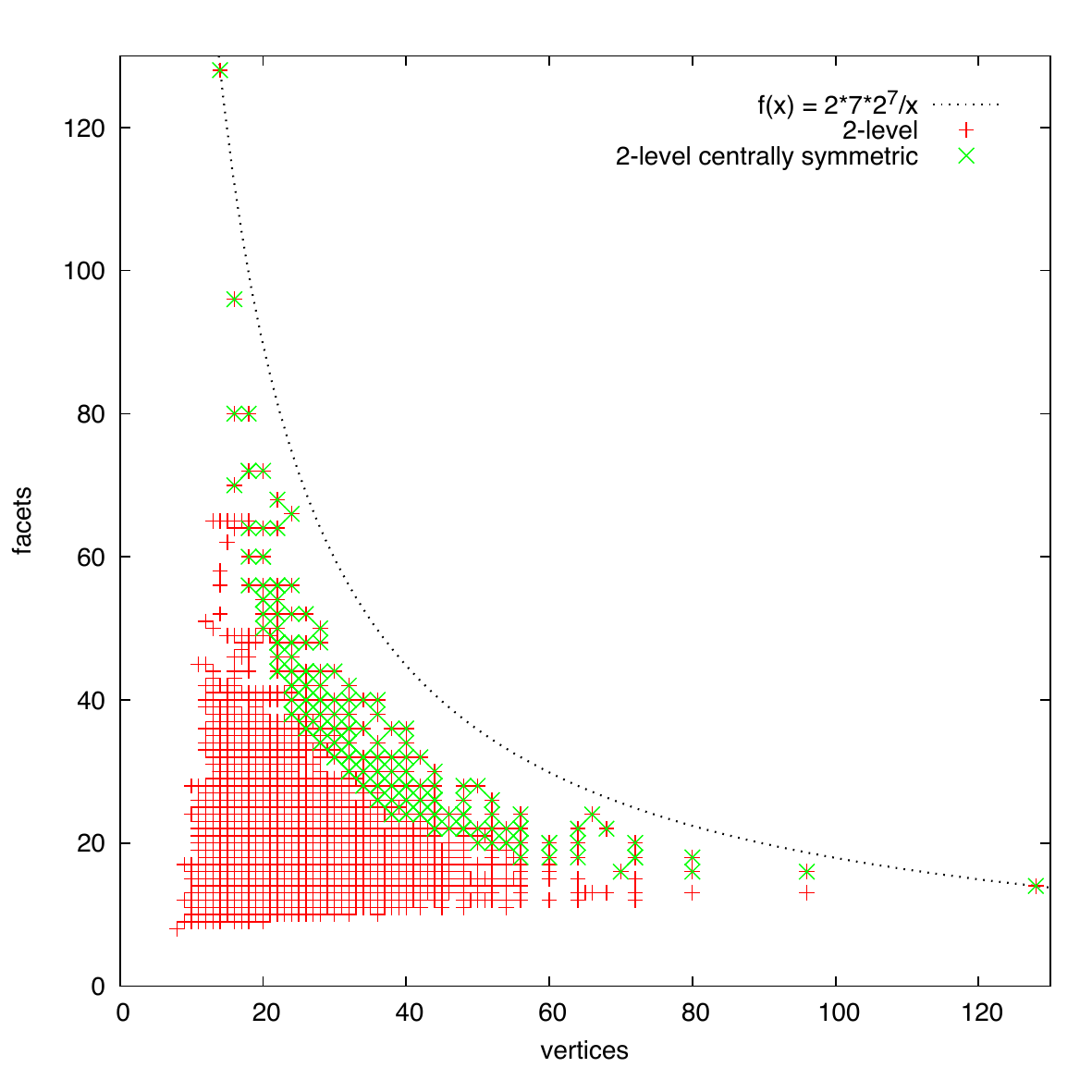}
}\hspace{2mm}%
\subfloat[]{\label{fig:freq_facvert_2}
\includegraphics[width=0.4\textwidth]{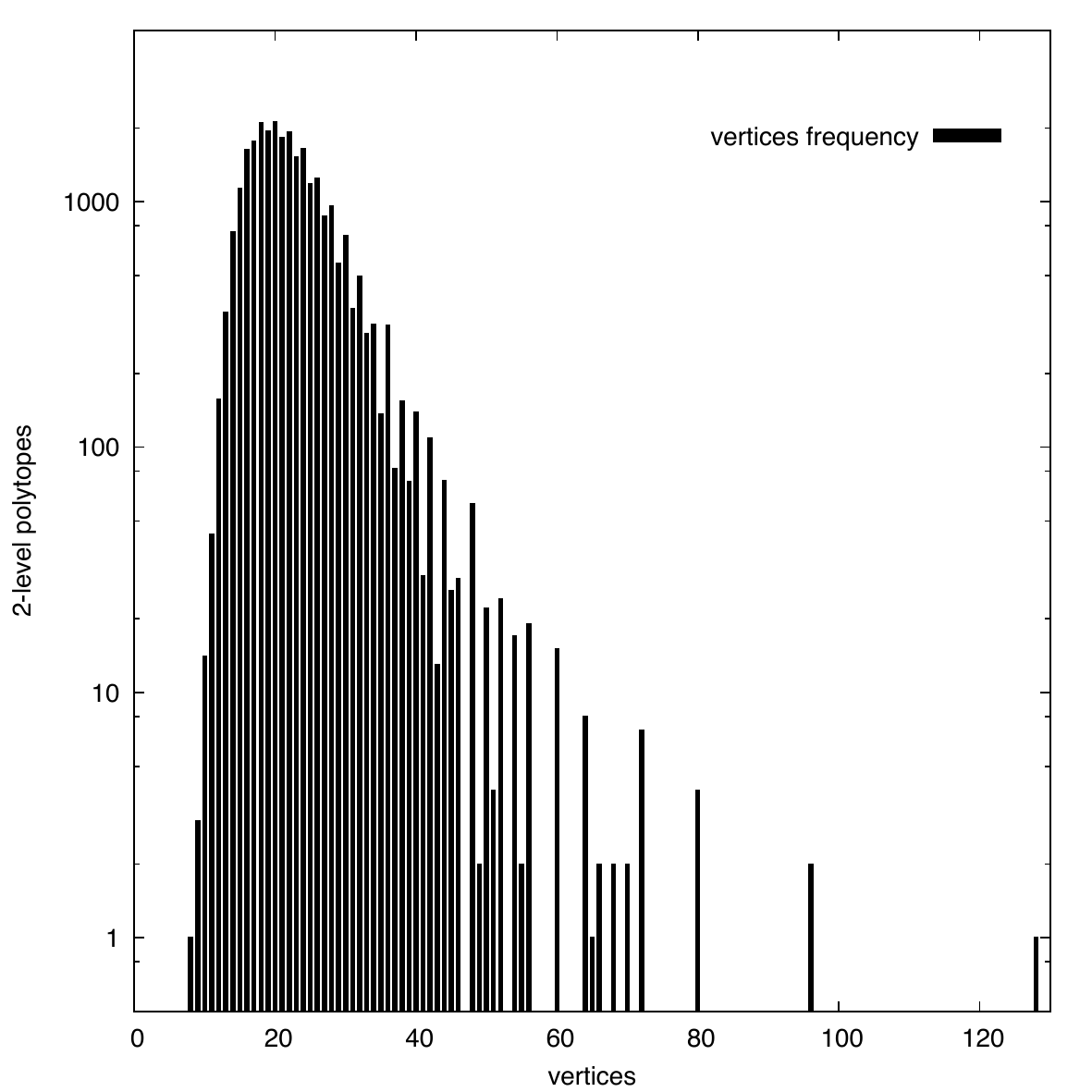}}
\caption{(a) Relation between number of facets and number of vertices for $2$-level $7$-polytopes; (b) histogram of the number of $2$-level $7$-polytopes as a function of the number of vertices.  \label{fig:freq_facvert}}
\end{figure*}

First, we considered the relation between the number of vertices and
the number of facets in $d=7$, see Figure~\ref{fig:freq_facvert}a.
The results are discussed in the next section.

Second, we inspected the number of $2$-level polytopes as a 
function of the number of vertices in dimension $7$, see 
Figure~\ref{fig:freq_facvert_2}. Interestingly, most of the
polytopes, namely 94\%, have 13 to 34 vertices.

Finally, our experiments show that all $2$-level centrally 
symmetric polytopes, up to dimension~$7$, validate Kalai's 
$3^d$ conjecture~\cite{Kalai89}. Note that for general centrally
symmetric polytopes, Kalai's conjecture is known to be true only 
up to dimension $4$~\cite{Sanyal09}. Dimension $5$ is the lowest
dimension in which we found centrally symmetric polytopes that 
are neither Hanner nor Hansen (for instance, one with $f$-vector%
\footnote{The $f$-\emph{vector} of a $d$-polytope $P$ is the $d$-dimensional vector whose $i$-th entry is the number of $(i-1)$-dimensional faces of $P$. Thus $f_0(P)$ gives the number of vertices of $P$, and $f_{d-1}(P)$ the number of facets of $P$. }
$(12, 60, 120, 90, 20)$). In dimension $6$ we found a $2$-level
centrally symmetric polytope with $f$-vector$ (20, 120, 290, 310, 
144, 24)$, for which therefore $f_0+f_4=44$. This is a stronger
counterexample to conjecture B of~\cite{Kalai89} than 
the one presented in~\cite{Sanyal09} having $f_0+f_4=48$.

%%%
%%% DISCUSSION
%%%

\section{Discussion} \label{sec:final_remarks}

The experimental evidence we gathered leads to interesting 
research questions. As a sample, we propose three conjectures.

The first conjecture is motivated by Figure~\ref{fig:freq_facvert_1}.
\begin{conj} \label{conj:1st}
For every $2$-level $d$-polytope $P$, we have $f_0(P)f_{d-1}(P) \leqslant d2^{d+1}$
\end{conj}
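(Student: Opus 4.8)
Write $n := f_0(P)$ and $m := f_{d-1}(P)$, and let $S \in \{0,1\}^{m \times n}$ be the $0/1$ slack matrix of $P$ (Section~\ref{sec:slack-matrix}), which has rank $d+1$. The plan is to argue by induction on $d$, the cases $d \leqslant 2$ being immediate. I would build the argument around two structural facts. First, for each facet $F_i$ of $P$ both $F_i$ and the \emph{opposite face} $F_i' := \{x \in P \mid g_i(x) = 1\}$ --- the face of $P$ carrying exactly the vertices not lying on $F_i$, which is a genuine face precisely because $P$ is $2$-level --- are $2$-level polytopes of dimension at most $d-1$ by Lemma~\ref{lem:faces}; hence $f_0(F_i) \leqslant 2^{d-1}$ and $f_0(F_i') \leqslant 2^{d-1}$, while $f_0(F_i) + f_0(F_i') = n$. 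Second, every vertex of $P$ lies on at least $d$ facets; moreover, placing that vertex at the origin of an $\mathcal H$-embedding, where by Lemma~\ref{lem:embeddings} the facets through it are exactly the constraints $x(E) \geqslant 0$, one expects to be able to bound its facet-degree by $2^{d-1}$ as well (equivalently, to bound the number of facets of the vertex figure).

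Double-counting the incidences between vertices and facets of $P$ gives
\[
 d\,n \;\leqslant\; \sum_{i=1}^{m} f_0(F_i) \;\leqslant\; 2^{d-1}\,m ,
\]
and the complementary count $\sum_{i=1}^m f_0(F_i') = nm - \sum_{i=1}^m f_0(F_i) \leqslant 2^{d-1} m$ recovers the bound $n \leqslant 2^d$. Since these two estimates are essentially complementary, they cannot on their own bound the product $nm = \sum_{i=1}^m f_0(F_i) + \sum_{i=1}^m f_0(F_i')$; the substance of the conjecture is a trade-off, namely that the closer $n$ is to $2^d$ the fewer facets $P$ can have. This is consistent with the extremal cases: if $n = 2^d$ then $P$ is the $d$-cube (the unique $2$-level $d$-polytope with $2^d$ vertices), giving $nm = 2d \cdot 2^d = d\,2^{d+1}$, and the $d$-cross-polytope, with $2d$ vertices and $2^d$ facets, realises equality from the opposite extreme.

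To turn the trade-off into a proof I would: (i) fix a facet $F$ of $P$ with the largest number of vertices and apply the induction hypothesis, $f_0(F)\,f_{d-2}(F) \leqslant (d-1)\,2^{d}$; (ii) bound $m = f_{d-1}(P)$ from above, using that every facet of $P$ other than $F$ must contain a vertex of the opposite face $F'$ (otherwise it would be a full-dimensional face of $F$, hence equal to $F$), combined with a bound on how these facets distribute over $\vertex(F')$ and the identity $n = f_0(F) + f_0(F')$; (iii) combine (i), (ii) and the incidence inequality above, optimising over the admissible values of $f_0(F)$, $f_0(F')$ and $f_{d-2}(F)$. The main obstacle is step (ii): the naive estimate $m - 1 \leqslant f_0(F') \cdot 2^{d-1}$ loses a factor of order $2^{d-1}/d$, so one needs a considerably sharper understanding of how the facets of $P$ are distributed among the vertices outside $F$ --- in effect a refined statement about the degrees of those vertices in terms of the combinatorics of $F$. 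This is exactly where the conjecture is hard; the experimental observation that the extremal polytopes appear to be precisely the centrally symmetric ones (Figure~\ref{fig:freq_facvert_1}) suggests that a global, symmetry-aware argument may ultimately be needed in place of a purely local induction on facets.
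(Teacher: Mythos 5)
The statement you are addressing is Conjecture~\ref{conj:1st}: the paper states it as an open conjecture supported only by experimental data for $d\leqslant 7$, offers no proof, and only cites subsequent work (\cite{Aprile16}) establishing it for certain infinite classes. So there is no proof in the paper to compare against, and your text --- candidly --- is not a proof either but a survey of why the obvious attack fails. Your preliminary observations are correct: for a $2$-level polytope each facet $F_i$ has a genuine opposite face $F_i'$ with $f_0(F_i)+f_0(F_i')=f_0(P)$, both are $2$-level of dimension at most $d-1$, hence each has at most $2^{d-1}$ vertices, and summing over $i$ gives $nm=\sum_i\bigl(f_0(F_i)+f_0(F_i')\bigr)\leqslant 2^d m$, i.e.\ only the known bound $n\leqslant 2^d$. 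You correctly diagnose that these complementary counts cannot bound the product, and that the conjectured inequality encodes a trade-off between $n$ and $m$ that no local double-counting captures.

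The genuine gaps are the ones you name plus one you gloss over. Step~(ii) of your plan --- controlling how the facets of $P$ distribute over the vertices of the opposite face $F'$ --- is precisely the missing idea, and nothing in the paper's machinery ($\mathcal H$-embeddings, simplicial cores, the closure operators) supplies it; your own estimate loses a factor of order $2^{d-1}/d$, which is fatal since the target bound is tight for the cube and the cross-polytope. Separately, your parenthetical hope that the facet-degree of a vertex is at most $2^{d-1}$ is unsupported: Lemma~\ref{lem:embeddings} only shows that the facets through the origin of an $\mathcal H$-embedding are of the form $x(E)\geqslant 0$, giving the trivial bound $2^d-1$; the vertex figure of a $2$-level polytope need not be $2$-level (the paper's own data show that $2$-levelness is not preserved under polarity), so you cannot bound its number of facets by $2^{d-1}$ by appealing to Lemma~\ref{lem:faces} or its dual. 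In short: the approach is a reasonable first reconnaissance, but it does not prove the statement, and the statement remains a conjecture.
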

Experiments show that this upper bound holds up to $d=7$. A recent work subsequent to the conference version of the current paper established that this conjecture is true for several infinite classes of $2$-level polytopes~\cite{Aprile16}.
It is known that $f_0(P) \leqslant 2^d$ with equality if and only if $P$ is a cube 
and $f_{d-1}(P) \leqslant 2^d$ with equality if and only if $P$ is a cross-polytope~%
\cite{Gouveia10}. Notice that, in both of these cases, $f_0(P)f_{d-1}(P) = d2^{d+1}$.

\begin{figure}[hb!]\centering
\subfloat[Numbers of $2$-level polytopes and $2$-level suspensions.]{\label{fig:bounds1}
\includegraphics[width=0.48\textwidth]{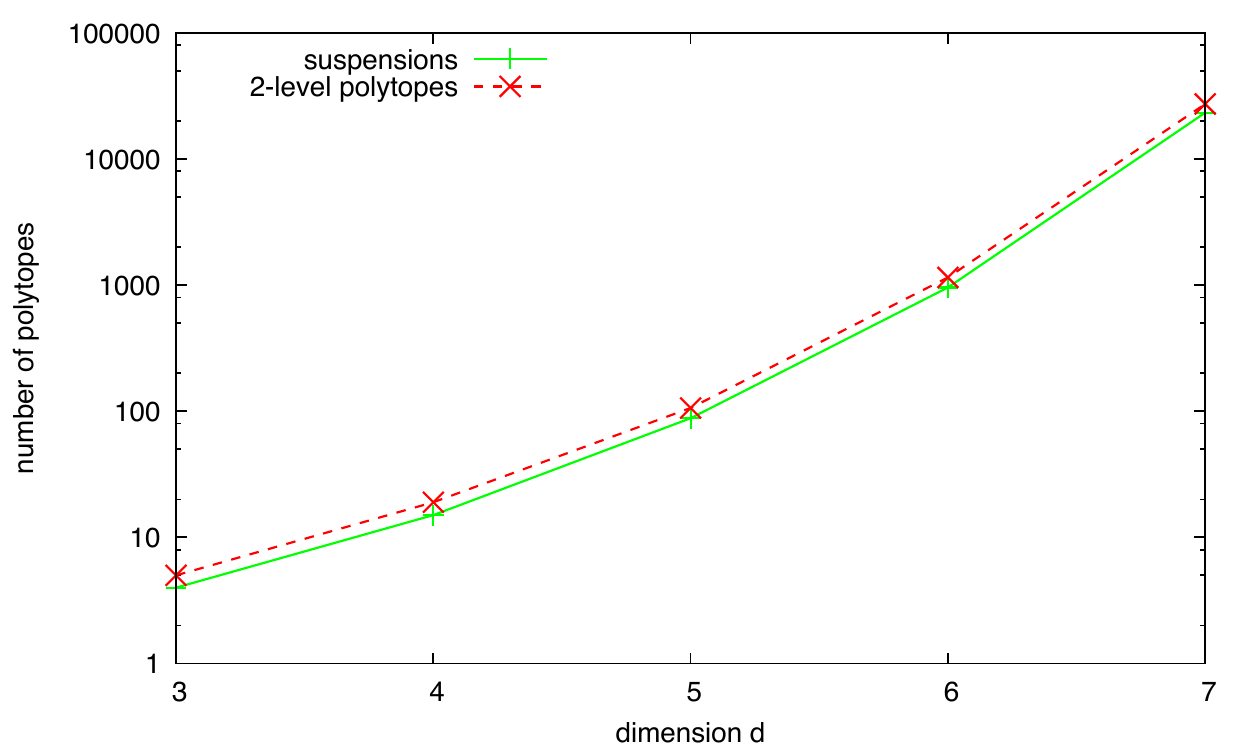}}\hspace{2mm}
\subfloat[Maximum number $f(d)$ of faces of $2$-level polytopes.]{\label{fig:bounds2}
\includegraphics[width=0.48\textwidth]{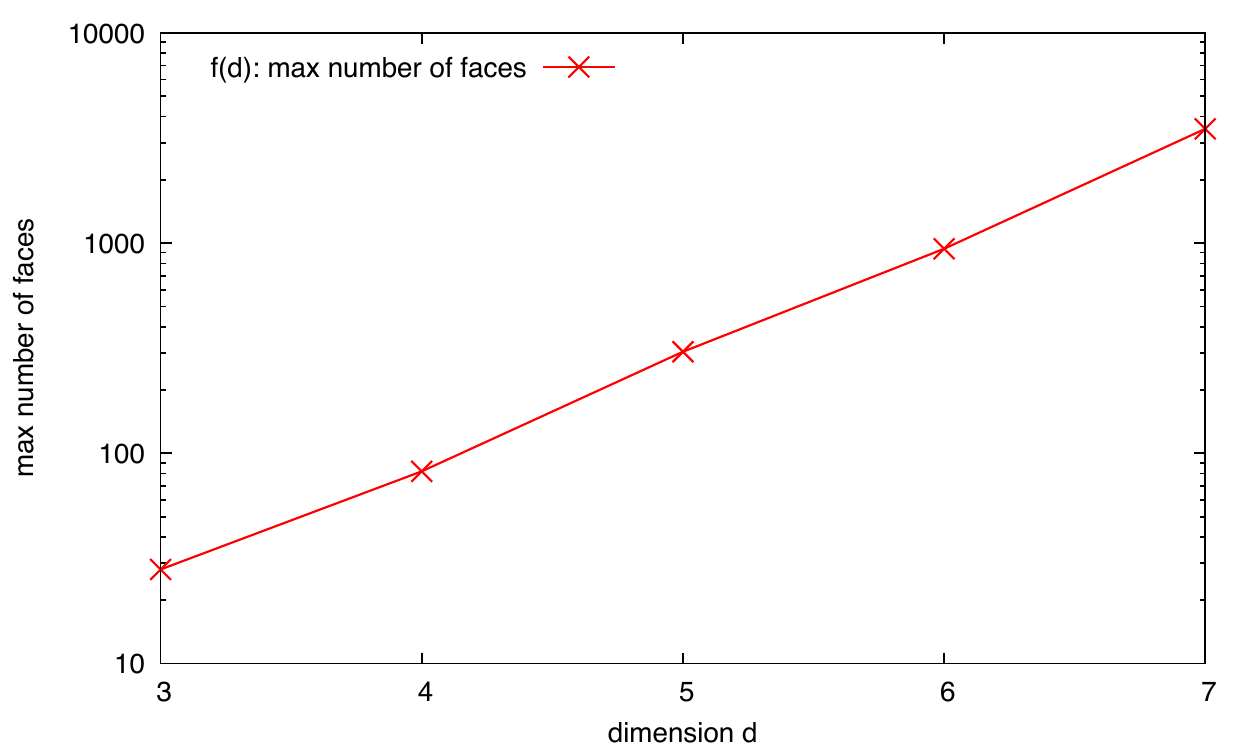}}
\caption{Bounds on number of 2-level polytopes, suspensions and faces.\label{fig:bounds}}
\end{figure}

The second conjecture concerns the asymptotic growth of the function $\ell(d)$ that 
counts the number of (combinatorially distinct) $2$-level polytopes in dimension $d$. 
All the known constructions of $2$-level polytopes are ultimately based on
graphs (sometimes directed). As a matter of fact, the best lower bound we have on the
number of $2$-level polytopes is $\ell(d) \geqslant 2^{\Omega(d^2)}$. For instance, 
stable set polytopes of bipartite graphs give $\ell(d) \geqslant 2^{d^2/4-o(1)}$.
This motivates our second conjecture.

\begin{conj} \label{conj:2nd}
The number $\ell(d)$ of combinatorially distinct $2$-level $d$-polytopes satisfies
$\ell(d) \leqslant 2^{\mathrm{poly}(d)}$.
\end{conj}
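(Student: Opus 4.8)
We do not know how to prove this conjecture, but the inductive structure underlying Algorithm~\ref{algo:enumeration} suggests a natural line of attack. Since every facet of a $2$-level polytope is again $2$-level (Lemma~\ref{lem:faces}), we have $\ell(d) \leqslant \sum_{P_0 \in L_{d-1}} |L_d(P_0)|$, where $L_d(P_0)$ is the set of combinatorial types of $2$-level $d$-polytopes having $P_0$ as a facet. If one could prove a bound $|L_d(P_0)| \leqslant 2^{p(d)}$ for a fixed polynomial $p$, uniformly over all $(d-1)$-dimensional $2$-level bases $P_0$, then unrolling the recursion would give $\ell(d) \leqslant \prod_{k=1}^{d} 2^{p(k)} = 2^{\sum_{k \leqslant d} p(k)} = 2^{O(d\, p(d))} = 2^{\mathrm{poly}(d)}$. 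So the whole problem reduces to controlling the number of $2$-level extensions of a single base.

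To bound $|L_d(P_0)|$, fix a simplicial core $\Gamma_0$ of $P_0$ and work in the $\mathcal H$-embedding. By Lemma~\ref{lem:last-closure}, each $P \in L_d(P_0)$ equals $\conv(\vertex(P_0) \cup A)$ for some closed set $A \subseteq \finalX(P_0,\Gamma_0)$ of the operator $\cl$, and when $P$ is genuinely $2$-level, Corollary~\ref{lem:possible_vertices} forces $A$ to be the vertex set of the face of $P$ opposite to $P_0$; then $\dchcl$-closedness recovers $A$ as $\firstX \cap \bigcap_{E \in \mathcal E} S(E)$ from the family $\mathcal E = \mathcal E(\vertex(P_0) \cup A) \subseteq 2^{[d]}$ of slab directions of $P$. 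In particular the map $P \mapsto \mathcal E$ is injective on $L_d(P_0)$, so it suffices to prove that only $2^{\mathrm{poly}(d)}$ families $\mathcal E$ are realizable as the slab-direction family of a $2$-level $d$-polytope extending $P_0$.

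This last step is the main obstacle, and it is where all elementary arguments stall: a priori $\mathcal E$ is an arbitrary family of nonempty subsets of $[d]$, so the naive count is $2^{2^d-1}$, of the same order as the generic bound $2^{|\finalX|} \leqslant 2^{3^{d-1}}$ for closed sets of a closure operator. The bound $f_{d-1}(P) \leqslant 2^d$ on the number of facets gains nothing, since it merely reflects that $\mathcal E$ lives in a set of size about $2^d$; and even the log-rank conjecture, which would bound the linear extension complexity of a single $2$-level polytope by $2^{\mathrm{polylog}(d)}$, does not by itself bound their number. Breaking this doubly exponential barrier seems to require exploiting polytopal structure that arbitrary $0/1$ set systems do not possess. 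The most promising route, supported by the evidence in Section~\ref{sec:final_remarks} that every presently known $2$-level polytope is ``graph-based'', is a structural classification: that each $2$-level $d$-polytope is obtained, through one of a bounded list of operations (glued products, twisted prisms, stable-set and order-polytope constructions, free sums, suspensions, $\ldots$), from a graph or similar combinatorial object on $\mathrm{poly}(d)$ elements. Such a classification would immediately give $\ell(d) \leqslant O(1) \cdot 2^{\mathrm{poly}(d)} = 2^{\mathrm{poly}(d)}$, since there are only $2^{\mathrm{poly}(d)}$ graphs on $\mathrm{poly}(d)$ vertices, and it would also account for the known $2^{\Omega(d^2)}$ lower bound coming from stable set polytopes of bipartite graphs.

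Short of a full classification, a weaker but still decisive target would be to show directly that the slab-direction family $\mathcal E$ of a $2$-level polytope admits a polynomial-size certificate: that, under the translation symmetries forced by $\dchcl$ and $\inccl$ (Lemmas~\ref{lem:dch_inv_tra} and \ref{lem:inc_inv_tra}), $\mathcal E$ is generated by a subfamily of size $\mathrm{poly}(d)$, so that the combinatorial type of $P$ is pinned down by $\mathrm{poly}(d)$ bits of information beyond the data of its base $P_0$; by the inductive reduction of the first paragraph, this again gives $\ell(d) \leqslant 2^{\mathrm{poly}(d)}$. I expect \emph{this} compression step --- extracting a polynomial-size combinatorial description of an arbitrary $2$-level polytope, in contrast to the exponential-size description provided by its slack matrix --- to be the hard and still-missing ingredient; everything preceding it, namely the inductive reduction and the translation into closed sets of $\cl$, is already available from the present paper.
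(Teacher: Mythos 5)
This statement is a \emph{conjecture}, not a theorem, and the paper offers no proof of it --- so you were right not to manufacture one. Your discussion is an honest and largely accurate assessment: the inductive reduction $\ell(d) \leqslant \sum_{P_0 \in L_{d-1}} |L_d(P_0)|$ is exactly what the enumeration algorithm exploits, and you correctly identify the missing ingredient as a polynomial-size combinatorial certificate for a $2$-level polytope, in place of the exponentially large slack matrix. One imprecision worth correcting: the map $P \mapsto \mathcal E$ you invoke is not well-defined on combinatorial types (a given type admits many $\mathcal H$-embeddings with different slab families), let alone injective. What you actually have --- and what the counting argument needs --- is the \emph{surjection} in the other direction, from realizable families $\mathcal E \subseteq 2^{[d]}$ (equivalently, from closed sets $A \subseteq \finalX$) onto $L_d(P_0)$, which gives $|L_d(P_0)| \leqslant$ (number of realizable $\mathcal E$). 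The resulting bound is the same, but the logical direction matters.

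The one piece of the paper's own machinery you touch on only obliquely is the closing Proposition, which proves that Conjecture~\ref{conj:3rd} implies Conjecture~\ref{conj:2nd}: if $\ell(d) = \Theta(s(d))$ where $s(d)$ counts $2$-level suspensions, then $\ell(d) \leqslant c \cdot s(d) \leqslant c \cdot \ell(d-1) \cdot f(d-1) \leqslant c \cdot c^{(d-1)^3} \cdot c^{(d-1)^2} \leqslant c^{d^3}$, using $f(d) \leqslant 2^{d^2}$. This is precisely the ``polynomial-size certificate'' strategy you describe made concrete for the suspension operation alone: a suspension is determined by its base $P_0$ plus a choice of nonempty face of $P_0$, i.e.\ by $\mathrm{poly}(d)$ extra bits beyond the data of the base. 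The gap the paper leaves open --- and which your discussion correctly highlights as the crux --- is that not every $2$-level polytope is a suspension, so one would need either to prove Conjecture~\ref{conj:3rd} or to exhibit an analogous low-information description covering the non-suspension cases.
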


A \emph{suspension} of a polytope $P_0 \subseteq \{x \in \R^d \mid x_1 = 0\}$ is any 
polytope $P$ obtained as the convex hull of $P_0$ and $P_1$, where $P_1 \subseteq 
\{x \in \R^d \mid x_1 = 1\}$ is the translate of some non-empty face of $P_0$. For
instance, the prism and the pyramid over a polytope $P_0$ are examples of suspensions. 
Also, any stable set polytope is a suspension.

Analyzing our experimental data, we noticed that a majority of $2$-level $d$-polytopes 
for $d \leqslant 7$ are suspensions of $(d-1)$-polytopes. Let $s(d)$ denote the number 
of (combinatorially distinct) $2$-level suspensions of dimension $d$. In 
Table~\ref{tbl:2Lnums}, we give the values of the $\ell(d)$ and $s(d)$ coming 
from our experiments, for $d \leqslant 7$, see also Figure~\ref{fig:bounds1}.

\begin{table}[ht!]\centering
\setlength{\tabcolsep}{.9em}
\begin{tabular}{r|rrr}
$d$ & $\ell(d)$ & $s(d)$ & $\frac{s(d)}{\ell(d)}$ \\\hline
3 & 5 & 4 & .8 \\
4 & 19 & 15 & .789 \\
5 & 106 & 88 & .830 \\
6  & 1\,150 & 956 & .831 \\
7 & 27\,292 & 23\,279 & .854 \\
\end{tabular}
\caption{Number of 2-level suspensions $s(d)$, 2-level polytopes $\ell(d)$, ratio of number of $2$-level suspensions to $2$-level polytopes.\label{tbl:2Lnums}}
\end{table}

In view of Table~\ref{tbl:2Lnums}, it is natural to ask what is the fraction 
of $2$-level $d$-polytopes that are suspensions. Excluding dimension $3$, we 
observe that this fraction increases with the dimension. This motivates our
last (and most risky) conjecture.

\begin{conj} \label{conj:3rd}
Letting $\ell(d)$ and $s(d)$ respectively denote the number of combinatorially distinct 
$2$-level polytopes and $2$-level suspensions in dimension $d$, we have 
$\ell(d) = \Theta(s(d))$.
\end{conj}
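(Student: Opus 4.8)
The easy half of the statement is the inequality $s(d)\leqslant\ell(d)$, which is a tautology: a $2$-level suspension is by definition a $2$-level polytope. So the whole content is the matching upper bound $\ell(d)=O(s(d))$, equivalently that the number $n(d):=\ell(d)-s(d)$ of $2$-level $d$-polytopes that are \emph{not} suspensions satisfies $n(d)=O(s(d))$. The first thing to note is that this cannot be obtained by dimensional induction: the values in Table~\ref{tbl:2Lnums}, together with the $2^{\Omega(d^2)}$ lower bound on $\ell(d)$ already realized by stable set polytopes of bipartite graphs (which are $2$-level suspensions, so $s(d)=2^{\Omega(d^2)}$ as well), strongly suggest that $\ell(d)/\ell(d-1)$ is unbounded. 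Hence any charging of non-suspensions must land on suspensions of the \emph{same} dimension.

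The plan is to build an isomorphism-invariant map $\Phi$ from $2$-level non-suspension $d$-polytopes into $2$-level $d$-suspensions with $|\Phi^{-1}(Q)|=O(1)$ for every $Q$. I would construct $\Phi$ through the inductive picture of Section~\ref{sec:enum-algorithm}. Given such a $P$, fix canonically (say, by lexicographic minimality of the slack matrix together with a simplicial core) a base facet $P_0$, and write $P=\conv(\vertex(P_0)\cup\vertex(P_1))$ in the corresponding $\mathcal H$-embedding, where $P_1$ is the opposite face; by Lemma~\ref{lem:faces}, $P_1$ is again $2$-level, and $P$ is a suspension over $P_0$ precisely when $\vertex(P_1)$ is a translate of the vertex set of a face of $P_0$. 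One would then let $\Phi(P)$ be a canonically chosen $2$-level suspension over $P_0$ that is ``closest'' to $P$ --- for example the one obtained by replacing $\vertex(P_1)$ by a suitable translate of $\vertex(G)$ for the lexicographically least face $G$ of $P_0$ compatible with the data of $P_1$, with a fixed fallback (such as the prism $P_0\times[0,1]$) to keep $\Phi$ everywhere defined.

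The hard part will be to bound the fibers of $\Phi$. Over a single base $P_0$ there are, as the vertex labels in Figure~\ref{fig:2L_graph} already indicate, many non-isomorphic $2$-level $d$-polytopes, and it is far from clear why only $O(1)$ of the corresponding opposite faces $P_1$ should be ``rounded'' to the same face $G$. Making this work appears to demand a genuinely new structural dichotomy for $2$-level polytopes: roughly, that a $2$-level polytope which is not a suspension must carry a rigid combinatorial decomposition --- as a product, a free sum, or a connected sum along a common facet --- that pins it down up to boundedly many choices once its base is fixed, so that essentially all of the freedom in assembling $2$-level $d$-polytopes over a prescribed base already yields suspensions. No such theorem is presently known; indeed, a sufficiently strong version of it, combined with an affirmative answer to the (open) question of whether $2$-level $d$-polytopes have at most $2^{\poly(d)}$ faces, would already settle Conjecture~\ref{conj:2nd}. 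For these reasons I expect a complete proof to be difficult, and at present the conjecture rests only on the enumeration data for $d\leqslant 7$.
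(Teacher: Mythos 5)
This statement is a conjecture in the paper, not a theorem: the authors offer no proof, only the enumeration data for $d\leqslant 7$ (Table~\ref{tbl:2Lnums}) as evidence, and your write-up correctly recognizes that and declines to claim a proof. Your sketch of a charging map $\Phi$ from non-suspensions to suspensions, and your honest assessment that bounding the fibers would require a structural dichotomy for $2$-level polytopes that is not currently known, is a reasonable account of why the conjecture is hard; it is fully compatible with the paper's framing. One small correction: you list ``whether $2$-level $d$-polytopes have at most $2^{\poly(d)}$ faces'' as open, but this is not --- a $2$-level $d$-polytope has at most $2^d$ vertices, hence at most $(2^d)^d = 2^{d^2}$ faces, and the paper uses exactly this bound in the proof that Conjecture~\ref{conj:3rd} implies Conjecture~\ref{conj:2nd}. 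The genuinely open ingredient is the structural one you isolate, not the face count.
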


We conclude by proving some dependence between the above conjectures.

\begin{prop}
Conjecture~\ref{conj:3rd} implies Conjecture~\ref{conj:2nd}.
\end{prop}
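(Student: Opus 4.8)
The plan is to bound $\ell(d)$ by counting suspensions recursively, using the fact that a $d$-dimensional $2$-level suspension is determined by a $(d-1)$-dimensional $2$-level polytope $P_0$ together with a choice of a non-empty face of $P_0$ to translate into the hyperplane $\{x_1=1\}$. So the number of $2$-level suspensions of dimension $d$ satisfies
\[
s(d) \leqslant \ell(d-1) \cdot \bigl(\text{max number of faces of a $2$-level $(d-1)$-polytope}\bigr).
\]
The key auxiliary fact I would establish (or invoke) is that a $2$-level $d$-polytope has at most $2^{\poly(d)}$ faces in total: indeed, every face is itself a $2$-level polytope (Lemma~\ref{lem:faces}), it has at most $2^d$ vertices and at most $2^d$ facets (as recalled just before Conjecture~\ref{conj:1st}), and a polytope with at most $2^d$ vertices has at most $2^{2^d}$ faces — but this crude bound is too weak. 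A better route: the number of faces of any $d$-polytope with $n$ vertices is at most $n^{O(d)}$ (a face is determined by its affine hull, which is spanned by at most $d$ vertices, or alternatively by the upper bound theorem), so a $2$-level $d$-polytope has at most $(2^d)^{O(d)} = 2^{O(d^2)}$ faces. Hence the number of (labelled, hence also combinatorial) suspensions of dimension $d$ built from a fixed $(d-1)$-dimensional base is at most $2^{O(d^2)}$.

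**First** I would record this face-counting bound: there is a polynomial $p$ such that every $2$-level $d$-polytope has at most $2^{p(d)}$ faces; consequently $s(d) \leqslant \ell(d-1)\cdot 2^{p(d-1)}$. **Next**, I invoke Conjecture~\ref{conj:3rd}: there is a constant $C$ with $\ell(d) \leqslant C\cdot s(d)$ for all $d$. Combining, $\ell(d) \leqslant C\cdot 2^{p(d-1)}\cdot \ell(d-1)$. **Then** I unroll this recursion: writing $q(d) \coloneqq p(d-1) + \log_2 C$, which is still polynomial in $d$, we get
\[
\ell(d) \;\leqslant\; \ell(3)\cdot \prod_{k=4}^{d} 2^{q(k)} \;=\; \ell(3)\cdot 2^{\sum_{k=4}^{d} q(k)}.
\]
Since $q$ is a polynomial, $\sum_{k=4}^{d} q(k) = \poly(d)$, and $\ell(3) = 5$ is a constant, so $\ell(d) \leqslant 2^{\poly(d)}$, which is exactly Conjecture~\ref{conj:2nd}.

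**The main obstacle** is pinning down the face-counting bound $2^{\poly(d)}$ for $2$-level polytopes rigorously in a short space: one must be careful that ``$2^d$ vertices implies $2^{\poly(d)}$ faces'' actually holds — it does, because any face is the convex hull of a subset of the vertices and is determined by its affine span, which is the affine span of some subset of at most $d+1$ vertices, giving at most $\binom{2^d}{d+1} \leqslant 2^{d(d+1)}$ possible affine spans and hence faces. (Alternatively one could cite that a $2$-level $d$-polytope has a slack matrix that is $0/1$ of size at most $2^d \times 2^d$, and the combinatorial type is determined by this matrix, so even the total count $\ell(d) \leqslant 2^{O(4^d)}$ is finite — but that gives nothing; the polynomial exponent genuinely comes through the suspension recursion and the per-base face bound, which is why Conjecture~\ref{conj:3rd} is doing the real work.) Everything else is a routine unrolling of a linear recurrence with polynomial-in-the-exponent multiplicative factors.
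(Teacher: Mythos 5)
Your proposal is correct and follows essentially the same route as the paper: bound $s(d)\leqslant \ell(d-1)\cdot f(d-1)$ where $f(d)\leqslant (2^d)^d=2^{O(d^2)}$ is the maximum number of faces of a $2$-level $d$-polytope (using the $2^d$ bound on vertices), invoke Conjecture~\ref{conj:3rd} to get $\ell(d)\leqslant C\cdot s(d)$, and unroll the resulting recursion to obtain $\ell(d)\leqslant 2^{\poly(d)}$. The paper phrases the unrolling as an induction establishing $\ell(d)\leqslant c^{d^3}$, but this is the same argument.
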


\begin{proof}
Let us prove by induction that Conjecture~\ref{conj:3rd} implies $\ell(d) \leqslant c^{d^3}$ 
for a sufficiently large constant $c > 1$. Let $c \geqslant 2$ be large enough so that 
$\ell(d) \leqslant c \cdot s(d)$ for all dimensions $d$. Notice that the maximum number 
$f(d)$ of faces of a $2$-level $d$-polytope satisfies $f(d) \leqslant (2^d)^d = 2^{d^2}
\leqslant c^{d^2}$ since $2$-level $d$-polytopes have at most $2^d$ vertices. Now using 
the induction hypothesis $\ell(d-1) \leqslant c^{(d-1)^3}$, we have
\begin{equation*}
\ell(d)     \leqslant c \cdot s(d)
            \leqslant c \cdot \ell(d-1) \cdot f(d-1)
            \leqslant c \cdot c^{(d-1)^3} \cdot c^{(d-1)^2}
            \leqslant c^{d^3},
\end{equation*}
which proves the claim.
\end{proof}

\section*{Acknowledgments}

We acknowledge support from the following research grants: ERC grant \emph{FOREFRONT} (grant agreement no. 615640) funded by the European Research Council under the EU's 7th Framework Programme (FP7/2007-2013), \emph{Ambizione} grant PZ00P2 154779 \emph{Tight formulations of 0-1 problems} funded by the Swiss National Science Foundation, the research grant \emph{Semidefinite extended formulations} (Semaphore 14620017) funded by F.R.S.-FNRS, and the \emph{ARC} grant AUWB-2012-12/17-ULB2 \emph{COPHYMA} funded by the French community of Belgium. 

\bibliographystyle{amsplain}
\bibliography{enumeration_arxiv}

\end{document}